\let\polishlcross=\l
\def\l{\ifmmode\ell\else\polishlcross\fi}
\let\emptyset=\varnothing
\let\theta=\vartheta
\let\rho=\varrho
\let\phi=\varphi
\newcommand{\Set}[1]{{\left\lbrace {#1} \right\rbrace}}
\def\set#1:#2{\Set{{#1} \colon {#2}}}
\newcommand{\restrict}{\restriction}
\def\moverlay{\mathpalette\mov@rlay}
\def\mov@rlay#1#2{\leavevmode\vtop{   \baselineskip\z@skip \lineskiplimit-\maxdimen
   \ialign{\hfil$\m@th#1##$\hfil\cr#2\crcr}}}
\newcommand{\charfusion}[3][\mathord]{
    #1{\ifx#1\mathop\vphantom{#2}\fi
        \mathpalette\mov@rlay{#2\cr#3}
      }
    \ifx#1\mathop\expandafter\displaylimits\fi}
\newcommand{\bigdcup}{\charfusion[\mathop]{\bigcup}{\cdot}}
\DeclareFontFamily{U}  {MnSymbolC}{}
\DeclareSymbolFont{MnSyC}         {U}  {MnSymbolC}{m}{n}
\DeclareFontShape{U}{MnSymbolC}{m}{n}{
    <-6>  MnSymbolC5
   <6-7>  MnSymbolC6
   <7-8>  MnSymbolC7
   <8-9>  MnSymbolC8
   <9-10> MnSymbolC9
  <10-12> MnSymbolC10
  <12->   MnSymbolC12}{}
\DeclareMathSymbol{\powerset}{\mathord}{MnSyC}{180}
\theoremstyle{plain}
\newtheorem{thm}{Theorem}[section]
\newtheorem{fact}[thm]{Fact}
\newtheorem{prop}[thm]{Proposition}
\newtheorem{clm}[thm]{Claim}
\newtheorem{cor}[thm]{Corollary}
\newtheorem{lemma}[thm]{Lemma}
\newtheorem{obs}[thm]{Observation}
\newtheorem{quest}[thm]{Question}
\theoremstyle{definition}
\title{Base partition for mixed families of finitary and cofinitary matroids}
\author{Joshua Erde}
\author{J.~Pascal Gollin}
\thanks{The second author was supported by the Institute for Basic Science (IBS-R029-C1).}
\author{Attila Jo\'{o}}
\thanks{The third author would like to thank the generous support of the Alexander von Humboldt Foundation and NKFIH OTKA-113047 and 129211}
\author{Paul Knappe}
\author{Max Pitz}
\address{Joshua Erde, Graz University of Technology, Institute of Discrete Mathematics, Steyrergasse 30, 8010 Graz, Austria}
\email{erde@math.tugraz.at}
\address{J.~Pascal Gollin, Discrete Mathematics Group, Institute for Basic Science (IBS), 55, Expo-ro, Yuseong-gu, Daejeon, Republic of Korea, 
34126}
\email{pascalgollin@ibs.re.kr}
\address{Attila Jo\'{o}, University of Hamburg, Department of Mathematics, Bundesstra{\ss}e 55 (Geomatikum), 20146 Hamburg, Germany}
\email{attila.joo@uni-hamburg.de}
\address{Paul Knappe, University of Hamburg, Department of Mathematics, Bundesstra{\ss}e 55 (Geomatikum), 20146 Hamburg, Germany}
\email{paul.knappe@studium.uni-hamburg.de}
\address{Max Pitz, University of Hamburg, Department of Mathematics, Bundesstra{\ss}e 55 (Geomatikum), 20146 Hamburg, Germany}
\email{max.pitz@uni-hamburg.de}
\keywords{infinite matroids, spanning trees, packing, covering}
\subjclass[2010]{Primary 05B35, 05B40, 05C63. Secondary 03E35} 
\begin{document}

\begin{abstract}
    Let ${\mathcal{M} = (M_i \colon i\in K)}$ be a finite or infinite family consisting of matroids 
    on a common ground set~$E$ each of which may be finitary or cofinitary. 
    We prove the following Cantor-Bernstein-type result: 
    If there is a collection of bases, one for each~$M_i$, 
    which covers the set~$E$, 
    and also a collection of bases which is pairwise disjoint, 
    then there is a collection of bases which partitions~$E$. 
    We also show that the failure of this Cantor-Bernstein-type statement for arbitrary matroid families is consistent relative to the axioms of set theory~ZFC. 
\end{abstract}

\maketitle

\section{Introduction}\label{sec:introduction}
\subsection{The main result}
Our starting point and main motivation was the following problem in infinite graph theory: 
Let ${G = (V,E)}$ be a connected graph. 
Given a cardinal~$\lambda$, 
a family ${(T_i \colon i < \lambda)}$ of spanning trees of~$G$ is called 
\begin{itemize}
    \item a \emph{$\lambda$-covering} of~$G$ if the union of the~${E(T_i)}$ is~$E$,
    \item a \emph{$\lambda$-packing} of~$G$ if the~$T_i$ are pairwise edge-disjoint, and
    \item a \emph{$\lambda$-partitioning} of~$G$ if~${(E(T_i) \colon i < \lambda)}$ is a partition of~$E$.
\end{itemize} 
Does the existence of a $\lambda$-covering and a $\lambda$-packing imply the existence of a $\lambda$-partitioning? 
Recently, we have proved that this holds when~$\lambda$ is infinite~\cite{EGJKP19}:

\begin{thm}
    \label{thm:intro-graphs-infinite}
    Let~$\lambda$ be an infinite cardinal. 
    Then a graph admits a $\lambda$-partitioning if and only if it admits both a $\lambda$-packing and a $\lambda$-covering.
\end{thm}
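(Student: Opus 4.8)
The ``only if'' direction is immediate, since a $\lambda$-partitioning is by definition at once a $\lambda$-packing and a $\lambda$-covering. For the converse I would fix a $\lambda$-packing $\cS = (S_i \colon i<\lambda)$ and a $\lambda$-covering $\cT = (T_i \colon i<\lambda)$ and manufacture a $\lambda$-partitioning from them. It is worth recalling why this is not automatic: the finite analogue is a pure counting statement --- $\lambda$ edge-disjoint spanning trees force $|E|\geq\lambda(|V|-1)$, a cover by $\lambda$ spanning trees forces $|E|\leq\lambda(|V|-1)$, hence equality, hence the packing already uses every edge --- whereas for infinite $\lambda$ counting is unavailable and an explicit transfinite construction is required, exactly in the spirit of the classical Cantor--Bernstein theorem, where $|A|\leq|B|\leq|A|$ is trivial for finite sets but needs a back-and-forth argument in general.

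The plan is a transfinite recursion along a fixed well-ordering $E=\{e_\alpha \colon \alpha<\kappa\}$ of the edge set, building the desired partitioning $(R_i \colon i<\lambda)$ as the union of an increasing chain of partial colourings. At stage $\alpha$ I carry: a partial colouring $c_\alpha\colon D_\alpha\to\lambda$, where $D_\alpha\subseteq E$ is the set of edges committed so far; an auxiliary $\lambda$-packing $\cS^\alpha=(S_i^\alpha)$ with $c_\alpha^{-1}(i)\subseteq S_i^\alpha$ for all $i$; and a ``finishing certificate'' extracted from $\cT$, witnessing that all as-yet-uncovered edges can still be absorbed. The invariants I maintain are: each colour class $c_\alpha^{-1}(i)$ is a forest; the $S_i^\alpha$ stay pairwise edge-disjoint; $e_\alpha\in D_{\alpha+1}$, so that $D_\kappa=E$; and each index is touched only boundedly often below any given stage, so that limits stabilise.

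At a successor stage $\alpha+1$, if $e_\alpha$ is already committed there is nothing to do; otherwise the covering $\cT$ dictates a colour $j$ (a member of $\cT$ containing $e_\alpha$), and I set $c_{\alpha+1}(e_\alpha)=j$. I then repair the auxiliary packing: if $e_\alpha\in S_j^\alpha$ already, that index is fine; if not, adding $e_\alpha$ to $S_j^\alpha$ creates a single finite cycle, and deleting a suitable further edge of that cycle gives a spanning tree $S_j^{\alpha+1}\ni e_\alpha$ still containing $c_{\alpha+1}^{-1}(j)$. The at most one other tree $S_k^\alpha$ that contains $e_\alpha$, and the finishing certificate, are mended by analogous one-edge exchanges, using the slack $\lambda=\lambda+\lambda$ so that the exchanges never clash. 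At a limit stage I take the union of the $c_\beta$ together with the eventual (stabilised) value of each $S_i^\beta$; a nested union of forests is a forest and pairwise edge-disjointness passes to the limit, so the invariants persist. In the end $c_\kappa\colon E\to\lambda$ is total, the classes $R_i:=c_\kappa^{-1}(i)$ are pairwise disjoint and cover $E$, and --- provided the recursion has been arranged so that every ``hole'' of the packing is eventually filled, which is the delicate point below --- each $R_i$ equals the spanning tree $S_i^\kappa$, so that $(R_i)$ is a $\lambda$-partitioning.

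The step I expect to be the main obstacle is precisely the coupling of the two certificates: one must set up the local exchanges so that they make \emph{global} progress, i.e.\ so that the committed colour classes genuinely exhaust their spanning trees in the limit rather than forever chasing uncovered edges around the graph. This is where a real well-foundedness or potential argument is needed, and where finitariness of the graphic matroid is used in an essential way --- nested unions of forests are forests, and the exchanges are governed by \emph{finite} cycles --- which is presumably also the property whose failure makes the analogous statement consistently false for unrestricted matroid families, as announced in the abstract.
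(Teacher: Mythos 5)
The paper does not prove Theorem~\ref{thm:intro-graphs-infinite} directly: it is quoted from the authors' earlier work~\cite{EGJKP19}, and in the present paper it arises as a special case of the matroid result Theorem~\ref{thm:main-thm-intro} (take ${K=\lambda}$ and every ${M_i=M(G)}$). Your outline is in the right spirit: a transfinite recursion that carries a partial colouring together with witnesses that it can still be completed both to a packing and to a covering is precisely what the paper formalises as \emph{feasibility} of a family ${(\langle I_i,S_i\rangle : i\in K)}$, and you correctly identify finitariness of the graphic matroid as the property that makes forests behave well under increasing unions and makes the exchanges finite.

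But the proposal stops exactly where the real work begins, and says so: you flag the ``coupling of the two certificates'' so that ``local exchanges make global progress'' as the delicate point, and then leave it open. That is not a detail --- it is the core of the argument. In the paper this gap is filled by the notion of $\mathcal{M}$-tight sets (Subsection~\ref{subsec:tight sets}), the elimination lemma (Lemma~\ref{lem:elimin}), and above all Lemma~\ref{lem:span-e}, which shows that a fixed edge can be brought into $\mathsf{span}_{M_j}(I_j)$ by a feasible extension; its proof is a genuine potential argument (choose a feasible extension, a compatible packing, and a finite set $I$ of smallest size with $e\in\mathsf{span}_{M_j}(I\cup I_j')$, then show ${|I|>0}$ produces a strictly smaller witness via a tight set). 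Your sketch asserts the need for such a well-foundedness argument without supplying one, and it also does not justify the limit steps: you write that the auxiliary spanning trees $S_i^\beta$ should take their ``eventual (stabilised) value,'' but with $\lambda$ possibly much smaller than $|E|$ it is not clear the exchanges can be scheduled so that each $S_i^\beta$ stabilises below a limit ordinal, and a non-stabilising sequence of spanning trees has no usable limit. Finally, your single recursion along a well-order of $E$ does not address the uncountable case: the paper instead reduces to ${|E|\le\aleph_0}$ by building a continuous increasing chain ${(E_\alpha)}$ of small pieces closed under the relevant fundamental circuits and cocircuits of a fixed packing and covering (Lemma~\ref{lem:smaller-problem}), solving each slice by induction on $|E|$ --- there is no analogue of this in your proposal. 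In short, you have the correct high-level frame; what is missing is the tight-set machinery that proves the exchanges terminate, a justification of the limit stages, and the reduction to countable ground sets.
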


The point of departure for this paper is the natural question of whether the corresponding result also holds for finite~$\lambda$. 
Note that, when~$G$ is finite the result is clearly true, 
since the existence of a $\lambda$-covering implies that~$G$ has so few edges that any $\lambda$-packing must be a $\lambda$-partitioning. 
However, this argument fails when the edge set of~$G$ is infinite, even if~$\lambda$ is finite.

As is often the case when considering spanning trees in graphs, there is a natural generalisation of the problem to a question about bases in matroids, 
and indeed our proof was influenced by this change in view. 
Recall that for every connected graph ${G = (V,E)}$, finite or infinite, 
there is a matroid~${M(G)}$ with ground set~$E$ whose circuits are the subsets of~$E$ given by the cycles of~$G$, and whose bases are precisely the subsets of~$E$ given by spanning trees of~$G$. 
Every matroid of the form~${M(G)}$ is \emph{finitary}: 
it has the property that all of its circuits are finite. 
Recently, there has been a renewed interest in the theory of infinite matroids, after Bruhn, Diestel, Kriesell, Pendavingh and Wollan~\cite{BDKW13} gave a set of cryptomorphic axioms for infinite matroids which encompasses duality, 
generalising the usual independent set-, bases-, circuit-, closure- and rank-axioms for finite matroids, extending the work initiated by Higgs~\cites{MR274315} and later continued by Oxley~\cites{MR1165540}.
All the necessary background about infinite matroids needed for this paper will be given in Section~\ref{s:matroids}. 

Our earlier notions of packings, coverings and decompositions of graphs into spanning trees have natural generalisations for matroids and moreover, 
as will be key in our proof, for families of matroids on the same ground set. 

Let~${\mathcal{M} = (M_i \colon i \in K)}$ be a family of matroids on the 
same ground set~$E$.
A family ${(B_i \colon i \in K)}$ where~$B_i$ is a base of~$M_i$ for each~${i \in K}$ is called  
\begin{itemize}
    \item a \emph{base covering} of~$\mathcal{M}$ if~${\bigcup_{i \in K} B_i = E}$,
    \item a \emph{base packing} of~$\mathcal{M}$ if the~$B_i$ are pairwise disjoint,
    \item a \emph{base partitioning} of~$\mathcal{M}$ if ${(B_i \colon i \in K)}$ is a partition of~$E$.
\end{itemize} 

Before stating our main result we recall that a matroid is called \emph{cofinitary} if its dual is finitary, or equivalently, if all of its cocircuits are finite.
\begin{thm}
    \label{thm:main-thm-intro}
    Let~$\mathcal{M}$ be a family of matroids on a common ground set~$E$ each of which is either finitary or 
    cofinitary. 
    Then~$\mathcal{M}$ admits a base partitioning if and only if it admits both a base covering and a base packing. 
\end{thm}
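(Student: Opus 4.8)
The forward implication is immediate: a base partitioning is, by definition, simultaneously a base covering and a base packing. For the converse, suppose $\mathcal M$ admits a base packing $(P_i\colon i\in K)$ and a base covering $(C_i\colon i\in K)$; the plan is to recast the problem as an instance of infinite matroid intersection and then to argue in the spirit of the Cantor-Bernstein theorem. Pass to the ground set $\hat E:=E\times K$ and put on it the direct sum $M:=\bigoplus_{i\in K}M_i$ — so $\hat S\subseteq\hat E$ is $M$-independent precisely when each fibre $\{e\colon(e,i)\in\hat S\}$ is $M_i$-independent — together with the partition matroid $N$ on $\hat E$ whose parts are the fibres $\{e\}\times K$, each of capacity $1$. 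A direct verification shows that the base partitionings of $\mathcal M$ are exactly the common bases of $M$ and $N$, that the packing $(P_i)$ provides a base of $M$ that is independent in $N$, and that the covering $(C_i)$ provides a base of $M$ that is spanning in $N$; by matroid duality the latter is equivalent to $M^*$ having a base independent in $N^*$. Thus the task reduces to the following: given a common independent set of $M$ and $N$ that is a base of $M$, together with (dually) a common independent set of $M^*$ and $N^*$ that is a base of $M^*$, produce a common base of $M$ and $N$. This is exactly the shape of Cantor-Bernstein — two ``one-sided'' solutions are to be amalgamated into a ``two-sided'' one.

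The heart of the argument will be a transfinite recursion constructing the desired assignment $\sigma\colon E\to K$, with each $\sigma^{-1}(i)$ a base of $M_i$, by fixing the value of $\sigma$ on one element of $E$ at a time along a fixed well-ordering of $E$. The finitary factors are handled primally: the class $\sigma^{-1}(i)$ is built up from below as an increasing chain of $M_i$-independent sets. The cofinitary factors are handled dually: the class $\sigma^{-1}(i)$ is pared down from above, so that its complement, which must be $M_i^*$-independent, is built up as an increasing chain in the \emph{finitary} matroid $M_i^*$. At intermediate stages the partial classes need not partition $E$; what is maintained is the invariant that the current configuration still extends to a base partitioning (or, rather, a suitable combinatorial surrogate of that statement). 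Since in both cases one is enlarging an independent set of a finitary matroid, the limit stages are handled by finitary compactness. At a successor stage, fixing the value of $\sigma$ on the next element and reconciling this with the invariant forces a cascade of base exchanges to ripple through the family; each exchange draws only on a finite fundamental circuit (in a finitary factor) or a finite fundamental cocircuit (in a cofinitary factor), so the cascade — formally, an alternating exchange walk — branches only finitely. The packing and the covering are precisely what is needed to drive such a walk to completion, that is, to rule out the ``hindrances'' that would obstruct it, so that the stage can be completed with the invariant intact.

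I expect the main obstacle to lie in exactly this augmenting step together with the limit bookkeeping: showing that the invariant really is preserved — that the (possibly infinite) alternating walk can always be carried out without destroying the spanning property of some class or the disjointness of the family, the interplay between the ``growing'' finitary classes and the ``shrinking'' cofinitary classes being the delicate point — and that at a limit stage finitary compactness, applied both to the finitary classes and to the complements of the cofinitary classes, really does return a configuration satisfying the invariant. This is where the restriction to finitary-or-cofinitary families is essential, in accordance with the paper's companion result that without it the statement can consistently fail.
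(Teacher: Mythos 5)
Your high-level strategy matches the paper's in several respects: the forward implication is dismissed the same way, the plan of maintaining an invariant asserting that the current partial assignment ``still extends'' in a suitable sense (the paper calls this \emph{feasibility}, meaning extendability to a covering \emph{and} to a packing separately, not to a partitioning — that would be circular), the idea of growing the finitary classes $\sigma^{-1}(i)$ from below and the complements $E\setminus\sigma^{-1}(i)$ of the cofinitary ones from below in $M_i^*$, and the use of finitary compactness at limits all correspond to what the paper does. However, there are genuine gaps that your sketch does not fill.

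The core missing ingredient is the machinery that makes the successor step work. You assert that the packing and covering ``rule out the hindrances'' so the alternating exchange walk can be driven to completion, but this is precisely the hard part and your sketch offers no argument for it. The paper's proof of the analogous step is not a single augmenting walk: the augmenting-path Lemma~\ref{lem: augpath} only gives a dichotomy between a finite exchange and a ``witness set'' $X$, and even then this witness does not directly obstruct a covering of $X+e$. What the paper actually needs is the notion of $\mathcal M$-tight sets (Lemma~\ref{lem: one more cover}, Corollary~\ref{cor:tight-char}), closure of tight sets under unions and intersections (Proposition~\ref{prop:tight-union}), and the elimination Lemma~\ref{lem:elimin}, all feeding into the three key Lemmas~\ref{lem:cover-e}, \ref{lem:span-e} and \ref{lem:cospan-e}. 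Lemma~\ref{lem:span-e} in particular rests on a minimisation over an auxiliary finite set $I$ together with an $\mathcal F$-compatible packing, not on a bound on the branching of an exchange walk; the finiteness of fundamental circuits enters, but it is nowhere near the whole story. Your sketch also does not address the uncountable case: the paper cannot run the feasibility recursion directly along a well-order of an uncountable $E$ (the invariant is not preserved through uncountable limits), and instead performs a separate transfinite induction on $|E|$ via a continuous chain $(E_\alpha)$ satisfying closure conditions (\ref{item:main-2-4}), (\ref{item:main-2-5}), reducing to smaller instances (Lemmas~\ref{lem:smaller-problem} and~\ref{lem:build-base}).

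Finally, your reduction to matroid intersection on $E\times K$ via $M=\bigoplus_i M_i$ and the partition matroid $N$ is different from the paper's reduction (Claim~\ref{claim:wlog-lambda=3}) and, unlike the paper's, does not preserve the hypotheses: when the family is mixed, $M=\bigoplus_i M_i$ is neither finitary nor cofinitary, and $N^*$ fails to be finitary when $K$ is infinite. So this reframing does not reduce the problem to a smaller instance of the theorem itself, and invoking ``matroid intersection'' is not a shortcut either — the infinite matroid intersection conjecture is open in the generality your $M$ lives in. The paper instead splits $E\times K$ into three purpose-built matroids, $M_0'$ (direct sum of the finitary $M_i$, hence finitary), $M_1'$ (direct sum of the cofinitary $M_i$, hence cofinitary), and $M_2'$ (a cofinitary ``coordinate'' matroid), which keeps the reduction inside the finitary-cofinitary class and cuts $K$ down to three. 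In short, the blueprint is right, but the load-bearing lemmas are absent and the reductions you propose do not achieve what you need them to.
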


By considering the special case of Theorem~\ref{thm:main-thm-intro} where ${K = \lambda}$ for some cardinal~$\lambda$ and each~$M_i$ is~${M(G)}$ for the same connected graph~$G$, 
we obtain the promised generalisation of Theorem~\ref{thm:intro-graphs-infinite} where~$\lambda$ is allowed to be finite. 

\begin{cor}
    Let~$\lambda$ be a cardinal. 
    Then a graph admits a $\lambda$-partitioning if and only if it admits both a $\lambda$-packing and a $\lambda$-covering.
\end{cor}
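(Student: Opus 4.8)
The plan is to obtain this corollary as an immediate consequence of Theorem~\ref{thm:main-thm-intro}, applied to the constant family in which every matroid equals the (finite-)cycle matroid of the graph. Throughout, let $G = (V,E)$ be the connected graph in question. First I would recall that $M(G)$ is finitary, since all of its circuits are cycles of $G$ and hence finite, and that — as noted in the introduction — its bases are precisely the edge sets of spanning trees of $G$. I would include the short verification of the latter: when $E \neq \emptyset$, a maximal forest of the connected graph $G$ is necessarily spanning and connected (otherwise maximality is contradicted by adding an edge of $G$ joining two components of the forest, or incident with an uncovered vertex), hence is a spanning tree; conversely every spanning tree is a maximal forest. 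The case $E = \emptyset$ is trivial. This yields a bijection $T \mapsto E(T)$ between spanning trees of $G$ and bases of $M(G)$.

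Next I would fix the cardinal $\lambda$, set $K := \lambda$, and take $M_i := M(G)$ for every $i \in K$. Then $\mathcal{M} := (M_i \colon i \in K)$ is a family of matroids on the common ground set $E$, and each $M_i$ is finitary — in particular, each is finitary or cofinitary — so Theorem~\ref{thm:main-thm-intro} applies to $\mathcal{M}$. It is worth stressing here that Theorem~\ref{thm:main-thm-intro} places no restriction on the size of the index set $K$, so the argument goes through verbatim whether $\lambda$ is finite or infinite; this is exactly the point at which the finite case of the graph statement (which is not covered by Theorem~\ref{thm:intro-graphs-infinite}) is recovered.

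The remaining step is a pure translation. Using the bijection above, a family $(B_i \colon i \in K)$ with $B_i$ a base of $M_i$ is the same data as a family $(T_i \colon i < \lambda)$ of spanning trees of $G$, via $B_i = E(T_i)$; and under this identification a base covering, base packing, or base partitioning of $\mathcal{M}$ corresponds precisely to a $\lambda$-covering, $\lambda$-packing, or $\lambda$-partitioning of $G$, respectively. Feeding this dictionary into Theorem~\ref{thm:main-thm-intro} gives: $G$ admits a $\lambda$-partitioning if and only if it admits both a $\lambda$-packing and a $\lambda$-covering, which is the assertion of the corollary.

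There is essentially no real obstacle here, as the whole content sits in Theorem~\ref{thm:main-thm-intro}. The only point that demands any care is the identification of the bases of $M(G)$ with the spanning trees of $G$, for which the connectedness of $G$ is used in an essential way; for a disconnected graph one would have to replace ``spanning tree'' by ``spanning forest'' throughout (equivalently, apply the theorem componentwise), but that lies outside the statement as phrased.
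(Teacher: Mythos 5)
Your argument is exactly the paper's: take $K=\lambda$, let every $M_i$ be the finitary cycle matroid $M(G)$, and read off the corollary from Theorem~\ref{thm:main-thm-intro} via the identification of bases of $M(G)$ with spanning trees of the connected graph $G$. The extra details you supply (verifying that bases are spanning trees, and the remark about disconnected graphs) are correct but inessential padding on the same one-line reduction.
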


The case~${\left|K\right| = 2}$ has the following reformulation by taking the dual of one of the two matroids.

\begin{cor}\label{cor:share a base}
    Let~$M_i$ be a finitary or cofinitary matroid on the ground set~$E$ for~${i \in \{0,1\}}$. 
    If there are bases~$B_i$, $B_i'$ of~$M_i$ such that ${B_0 \subseteq B_1}$ and ${B_1' \subseteq B_0' }$, 
    then~$M_0$ and~$M_1$ share a base. 
    \qed
\end{cor}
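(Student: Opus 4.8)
The plan is to deduce Corollary~\ref{cor:share a base} directly from Theorem~\ref{thm:main-thm-intro}, applied to the two-element family ${\mathcal{M} := (M_0, M_1^*)}$, where~$M_1^*$ denotes the dual of~$M_1$. Since~$M_1$ is finitary or cofinitary, its dual~$M_1^*$ is cofinitary or finitary, respectively, so together with~$M_0$ the family~$\mathcal{M}$ consists of matroids on the common ground set~$E$ each of which is finitary or cofinitary; hence Theorem~\ref{thm:main-thm-intro} applies to~$\mathcal{M}$. Throughout I would use the standard duality fact (recalled in Section~\ref{s:matroids}) that a set~${B \subseteq E}$ is a base of~$M_1$ if and only if~${E \setminus B}$ is a base of~$M_1^*$.

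The key step is to translate the three notions for~$\mathcal{M}$ into statements about~$M_0$ and~$M_1$. A base packing of~$\mathcal{M}$ is a pair~${(C_0, D)}$ with~$C_0$ a base of~$M_0$, $D$ a base of~$M_1^*$, and~${C_0 \cap D = \emptyset}$; writing~${D = E \setminus C_1}$ for the associated base~$C_1$ of~$M_1$, the disjointness condition reads exactly~${C_0 \subseteq C_1}$. Dually, a base covering of~$\mathcal{M}$ is such a pair~${(C_0, E\setminus C_1)}$ with~${C_0 \cup (E \setminus C_1) = E}$, which is equivalent to~${C_1 \subseteq C_0}$. Finally, a base partitioning of~$\mathcal{M}$ is a pair that is simultaneously a packing and a covering, which forces~${C_0 = C_1}$; in other words, a base partitioning of~$\mathcal{M}$ is precisely a base shared by~$M_0$ and~$M_1$.

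With this dictionary the corollary falls out. The hypothesis that there are bases with~${B_0 \subseteq B_1}$ says exactly that~$\mathcal{M}$ admits a base packing, and the hypothesis that there are bases with~${B_1' \subseteq B_0'}$ says exactly that~$\mathcal{M}$ admits a base covering. Theorem~\ref{thm:main-thm-intro} then yields a base partitioning of~$\mathcal{M}$, which by the previous paragraph is a base common to~$M_0$ and~$M_1$, as desired.

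Since this is a corollary rather than a fresh argument, I do not expect a genuine obstacle: the only point needing care is the bookkeeping with complements — checking that, after dualising, the two given inclusions land on the packing side and the covering side in the correct directions, and that passing to~$M_1^*$ keeps the family within the finitary-or-cofinitary class required by Theorem~\ref{thm:main-thm-intro}. (One could dualise~$M_0$ instead of~$M_1$; by the symmetry of the two hypotheses this merely swaps which inclusion produces the packing and which produces the covering.)
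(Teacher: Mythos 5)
Your proof is correct and is exactly the dualisation argument the paper has in mind: the authors simply write ``The case~${\left|K\right| = 2}$ has the following reformulation by taking the dual of one of the two matroids'' and leave the translation implicit, which is what you spell out. Your dictionary between packings/coverings/partitionings of~${(M_0, M_1^*)}$ and nested pairs of bases is accurate, and the observation that~$M_1^*$ stays in the finitary-or-cofinitary class is the only hypothesis that needs checking.
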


Let us point out that the Cantor-Bernstein theorem 
mentioned in the abstract 
is a special case of our main result. 
Indeed, the graph theoretic reformulation of the Cantor-Bernstein theorem claims that if~${G = (V_0, V_1;E)}$ is a bipartite graph admitting a matching~${F_i \subseteq E}$ that covers~$V_i$ for~${i \in \{ 0,1 \}}$, then it admits a perfect matching. 
We describe the matchings in~$G$ as common independent sets of two partition matroids in the usual way, i.e., 
for~${i \in \{ 0,1 \}}$ let~$M_i$ be the finitary matroid on~$E$ whose circuits are the edge pairs with a common vertex in~$V_i$. 
Then~$F_i$ is a common independent set which is a base of~$M_i$ 
and therefore Corollary~\ref{cor:share a base} ensures the existence of a common base. 
Since~$G$ cannot contain isolated vertices, a common base is precisely a perfect matching.

However, perhaps surprisingly, the generalisation of Theorem~\ref{thm:main-thm-intro} (and even Corollary~\ref{cor:share a base}) to families of arbitrary matroids is consistently false.

\begin{thm}\label{thm:unprovable-intro}
    Assuming the Continuum Hypothesis, there is a countable matroid~$M$  
    such that the matroid family consisting of two copies of~$M$ admits a base packing and a base covering, but not a base partitioning.
\end{thm}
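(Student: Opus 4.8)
The plan is to produce, assuming CH, a single countable matroid~$M$ with the properties that \textup{(i)}~$M$ has two disjoint bases, \textup{(ii)}~$M$ has two bases whose union is the ground set~$E$, and \textup{(iii)}~no base of~$M$ has a complementary base; then the family~$(M,M)$ admits a base packing by~\textup{(i)} and a base covering by~\textup{(ii)}, but no base partitioning by~\textup{(iii)}, which is exactly what is claimed. Note that \textup{(i)}--\textup{(iii)} force genuine pathology: the two disjoint bases of~\textup{(i)} must fail to cover~$E$ and the two covering bases of~\textup{(ii)} must fail to be disjoint, since otherwise they would already refute~\textup{(iii)}. In particular, by Theorem~\ref{thm:main-thm-intro} the matroid~$M$ cannot be finitary or cofinitary, so it must have both an infinite circuit and an infinite cocircuit, and these must be entangled rather than living in separate direct summands (for a direct sum the three properties reduce to the summands, and these are of smaller rank and ultimately finitary or cofinitary). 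Thus the real task is to build a sufficiently controllable ``wild'' matroid while diagonalising against~\textup{(iii)}.

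We build~$M$ by a transfinite recursion of length $\mathfrak{c} = 2^{\aleph_0}$, and this is the only place CH enters. Since~$E$ is countable there are exactly~$\mathfrak{c}$ partitions $E = B \sqcup B'$, so under CH we may enumerate them as $\langle (B_\alpha, B'_\alpha) : \alpha < \omega_1 \rangle$. We maintain an increasing chain of ``approximate matroids'', presented explicitly enough (say via their circuit families) that the axioms of Bruhn--Diestel--Kriesell--Pendavingh--Wollan can be tracked, together with four pairwise distinct sets $R, \tilde R, S, \tilde S \subseteq E$ that are bases of every approximation, with $R \cap \tilde R = \emptyset$, $R \cup \tilde R \subsetneq E$ and $S \cup \tilde S = E$. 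These four bases are kept \emph{frozen}: the only modification ever performed is the adjunction of circuits — which must be infinite, as $E$ has only countably many finite subsets to spend over all $\omega_1$ steps — and we never add a circuit contained in one of $R, \tilde R, S, \tilde S$. Since enlarging the set of dependencies can only enlarge closures, these four stay spanning; since they never come to contain a circuit, they stay independent; hence they remain bases of the final~$M$, so $\{R, \tilde R\}$ stays a base packing and $\{S, \tilde S\}$ a base covering, and \textup{(i)} and \textup{(ii)} are secured.

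At step~$\alpha$ we attend to $(B_\alpha, B'_\alpha)$. If one of them is already dependent we do nothing, since dependence is preserved by adjoining circuits. Otherwise both are independent, and a short check against the four frozen bases — dependence, finiteness, and containment in a frozen base each ensure a set is never a base in any later approximation, and if one of $B_\alpha, B'_\alpha$ equals a frozen base then the other is such a perpetual non-base — shows that, outside these trivially handled cases, one of them, say~$B_\alpha$, is infinite, independent, and contained in no frozen base. We then enlarge the circuit family by a new infinite circuit $C \subseteq B_\alpha$ with $C \not\subseteq R, \tilde R, S, \tilde S$ (possible since $B_\alpha$ meets the complement of each frozen base), after which $B_\alpha$ is dependent and so never again a base, while the frozen bases are untouched. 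Taking unions at limit stages and at~$\omega_1$ yields~$M$, which by construction has no base partitioning.

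The main obstacle is twofold, and it is exactly what confines the result to a consistency statement, in accordance with Theorem~\ref{thm:main-thm-intro}. First, one must ensure that the limiting object is a matroid: the sensitive axioms are infinite circuit elimination and the maximality axiom~(IM), neither of which is preserved by arbitrary unions, and merely closing a circuit family under circuit elimination does not by itself deliver~(IM). One handles this by carrying along a structural invariant on the circuit family that is strong enough to force~(IM) for the matroid it generates, is preserved by the ``add one infinite circuit avoiding the frozen bases'' move and by circuit elimination, is preserved at limits, and in particular guarantees that no circuit forced in by elimination ever lies inside a frozen base; arranging such an invariant already in the base case is precisely where one exhibits the wild matroid that is simultaneously packable and coverable. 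Second, one must check that the move at step~$\alpha$ is always legal, i.e.\ that the chosen infinite circuit can be adjoined consistently with every earlier commitment; this works because at stage $\alpha < \omega_1$ only countably many circuits have been declared so far, leaving enough freedom, and it is exactly this ``only countably much has been decided'' that breaks down once $\mathfrak{c} > \aleph_1$. This is why CH — or merely $\mathfrak{c} = \aleph_1$, or an even weaker cardinal-characteristic hypothesis — suffices, and why the statement is a consistency result and not a theorem of ZFC.
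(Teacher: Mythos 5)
Your high-level strategy — a length-$\omega_1$ recursion under CH that maintains a packing pair and a covering pair while ``killing'' every potential partition — is in the right spirit, but there are two genuine gaps, both of which you half-acknowledge without resolving, and it is precisely these that the paper actually solves.

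First, you never exhibit the construction that guarantees the limit of your increasing chain of circuit families is a matroid. You correctly note that the circuit axioms (infinite circuit elimination) and the maximality axiom (IM) are not preserved by unions, and you say ``one handles this by carrying along a structural invariant on the circuit family,'' but this invariant is never named, and it is not at all clear one exists that is simultaneously strong enough to force (IM), preserved under adjoining an arbitrary well-chosen infinite circuit, preserved under circuit elimination, and preserved at limits of length $\omega_1$. This is not a routine bookkeeping step: for infinite matroids, building a matroid by incrementally declaring circuits is a notoriously delicate operation, and the axioms can fail in the limit even if each step is individually legal. You would also need to guarantee that the closure of your growing circuit family under elimination never produces a circuit inside one of the four frozen bases $R, \tilde R, S, \tilde S$; you flag this requirement too, but again only as something to be arranged. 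Second, you never produce the base case, i.e.\ the initial ``wild'' matroid that is simultaneously packable and coverable, is neither finitary nor cofinitary, and already carries whatever invariant you would need. Without it the recursion has nowhere to start.

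The paper circumvents both difficulties by working in the opposite direction: rather than building the circuit family, it builds the set $\mathcal{B}$ of \emph{bases} of a \emph{uniform} matroid directly, following Bowler and Geschke. For uniform matroids the base axioms reduce to three transparent combinatorial conditions on $\mathcal{B}$ (pairwise $\subseteq$-incomparability, closure under single-element exchange, and a ``trichotomy'' condition for every pair $I \subseteq X \subseteq \mathbb{N}$), so there is no delicate axiom-tracking; one simply verifies these conditions are maintained. The recursion is driven by the trichotomy condition rather than by an enumeration of partitions, and the ``no base partitioning'' property is kept as an invariant: whenever a new base $B_\alpha$ is added, the splitting set $G$ is chosen so that $B_\alpha$ has infinite intersection or infinite co-intersection with every already-declared base, which prevents $B_\alpha$ from completing a partition with any of them. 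The set-theoretic input needed to choose $G$ is the reaping number hypothesis $\mathfrak{r} = 2^{\aleph_0}$, which is what the paper actually uses; CH is merely one sufficient condition for it. In short: your proposal identifies the right difficulties but leaves them unsolved, whereas the paper avoids them entirely by switching from circuits to the bases of a uniform matroid, where the axioms are tractable.
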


\subsection{Open questions}
Our results give rise to a number of open questions. 
Undoubtedly the most pressing one is whether Theorem~\ref{thm:main-thm-intro} extends to more general matroid families than finitary-cofinitary ones.
A matroid is called \emph{tame} if the intersection of any circuit and cocircuit is finite. 
Tame matroids are the largest subclass of matroids where usually ``nice behaviour'' is expected. 
For example the dual of a tame thin representable matroid is always thin representable which may fail for general matroids,  see~\cites{borujeni2015thin}. 
Even more special classes include the graphic matroids, tame matroids where every finite minor is graphic in the usual sense, see~\cite{bowler2018infinite}, 
and the $\Psi$-matroids, graphic matroids that are related to the Freudenthal compactification of locally finite graphs, 
see~\cite{bowler2013ubiquity}. 

\begin{quest}
    Is the analogue of Theorem~\ref{thm:main-thm-intro} true for tame matroids? 
    If not, does it hold for graphic matroids? 
    If not, does it hold for $\Psi$-matroids? 
\end{quest}

Another natural generalisation of Theorem~\ref{thm:main-thm-intro} concerns nearly (co-)finitary matroids which were first introduced in~\cite{MR3784779}. 
For a matroid~$M$, the \emph{finitarisation~$M^\textnormal{fin}$} of~$M$ is the matroid on the same ground set as~$M$ in which a set is independent if and only if all its finite subsets are independent in~$M$. 
We call a matroid~$M$ \emph{nearly finitary} if whenever~$B'$ is a base of~$M^\textnormal{fin}$ and~$B$ is a base of~$M$ with~${B' \supseteq B}$ then ${B' \setminus B}$ is finite. 
The definition of \emph{nearly cofinitary} is dual. 

\begin{quest}
    Is the analogue of Theorem~\ref{thm:main-thm-intro} true for families consisting only of nearly finitary and nearly cofinitary matroids? 
\end{quest}

Our last question is motivated by Theorem~\ref{thm:unprovable-intro}.

\begin{quest}
    Is there a counterexample in ZFC to the analogue of Theorem~\ref{thm:main-thm-intro} for families of arbitrary matroids? 
\end{quest}

\subsection{Structure of this paper}
The paper is structured as follows. 
In Section~\ref{s:matroids} we will give a short introduction to the theory of infinite matroids. 
Section~\ref{sec_prep} contains some auxiliary results which are used in our proof of Theorem~\ref{thm:main-thm-intro}, which appears in Section~\ref{sec_mainresult}. 
Finally, we end in Section~\ref{s:consistently-false} with the proof of Theorem~\ref{thm:unprovable-intro}.

\subsection{Sketch of the proof}
Let us give a brief sketch of the structure of the proof of Theorem~\ref{thm:main-thm-intro}.
We describe first our strategy in the case when every~$M_i$ is finitary and, furthermore,~$E = \{e_n: n\in \mathbb{N}\}$ is countable and~$K$ is finite. 
We will construct the desired partitioning ${(B_i \colon i \in K)}$ by recursion. 
We will maintain, at each step~$n$, a family ${(I_i^{n}\colon i \in K)}$ of subsets of~$E$ which can be extended to both a covering and a packing. 
More precisely, there exists a covering ${(R_i\colon i \in K)}$ such that ${R_i \supseteq I_i^{n}}$ for every~${i \in K}$ 
and there also exists a packing ${(P_i\colon i\in K)}$ with ${P_i \supseteq I_i^{n}}$. 
We call such a family feasible. 
During the recursion the sets~$I_i^{n}$ will be $\subseteq$-increasing in~$n$ for each~$i$, and we will ensure that   
${e_n \in \bigcup_{i\in K} I_i^{n+1}}$ and~$e_n$ is spanned by~$I_i^{n+1}$ in~$M_i$ for every~${i \in K}$. 
This will guarantee that for every~${i \in K}$, if we let~$B_i$ be the union of sets~$I_i^{n}$, then $B_i$ is a base of~$M_i$ and, moreover, the~$B_i$ partition~$E$. 

In order to extend our family ${(I_i^{n}\colon i \in K)}$ we will need to analyse how adding a single edge to one~$I_j^{n}$ and leaving the other~$I_i^{n}$ unchanged can effect the feasibility of the family. 
This is done in Subsection~\ref{subsec:tight sets} and leads to the notion of tight sets. 
Then, in Subsection~\ref{subsec: feasible extensions}, we build up the framework that we use in Subsection~\ref{subsec: the main lemmas} to prove that a general step of the recursion above can be done. 

Reducing the size of the matroid family is due to a short trick (see Claim~\ref{claim:wlog-lambda=3}). 
The reduction of the main result to countable matroids (via elementary submodel-type arguments) is more involved and fills most of Section~\ref{sec_mainresult}. 
Allowing non-finitary cofinitary matroids in the family makes some of the definitions and arguments more complicated, 
for example we need to build the bases~$B_i$ and their complements simultaneously during the recursion in contrast to the ``only finitary'' case we described.

\section{Infinite Matroids}
\label{s:matroids}

In this section we will gather some basic facts about infinite matroids that are necessary for this paper. 
Most of these facts are well-known for finite matroids. 
For a more detailed introduction to the theory of infinite matroids see~\cite{nathanhabil}.

A pair ${(E,\mathcal{I})}$ is a \emph{matroid} if ${\mathcal{I} \subseteq \mathcal{P}(E)}$ satisfies
\begin{enumerate}
    \item ${\emptyset \in \mathcal{I}}$; 
    \item $\mathcal{I}$ is downward closed; 
    \item For every ${I, B \in \mathcal{I}}$, where~$B$ is $\subseteq$-maximal in~$\mathcal{I}$ and~$I$ is not, there is an~${x \in B \setminus I}$ such that~${I+x \in \mathcal{I}}$; 
    \item For every~${X \subseteq E}$, every~${I \in \mathcal{I} \cap \mathcal{P}(X)}$ can be extended to a $\subseteq$-maximal element of~${\mathcal{I} \cap \mathcal{P}(X)}$. 
\end{enumerate}
The sets in~$\mathcal{I}$ are called \emph{independent} while the sets in ${\mathcal{P}(E) \setminus \mathcal{I}}$ are dependent. 
We note that, when~$E$ is finite, (1)--(4) are equivalent to the usual axiomatisation of matroids in terms of independent sets. 
The maximal independent sets are called \emph{bases} and the minimal dependent sets are called \emph{circuits}. 
Every dependent set contains a circuit (which, in fact, is non-trivial for infinite matroids). 
A matroid is called \emph{finitary} if all of its circuits are finite.

\begin{fact}\label{Fact-finitary}
    A matroid is finitary if and only if for every $\subseteq$-increasing chain of independent sets the union of the chain is also independent.   
\end{fact}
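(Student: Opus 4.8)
The plan is to prove the two implications separately: the forward one directly from the definition of \emph{finitary}, the backward one by contraposition.

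For the forward direction I would assume that every circuit of $M = (E,\mathcal{I})$ is finite and take an arbitrary chain $(I_\alpha)$ in $\mathcal{I}$, ordered by inclusion, with union $I := \bigcup_\alpha I_\alpha$. If $I$ were dependent it would contain a circuit $C$, which by hypothesis is finite, say $C = \{c_1, \dots, c_k\}$. Each $c_j$ lies in some member $I_{\alpha_j}$ of the chain, and since the finitely many sets $I_{\alpha_1}, \dots, I_{\alpha_k}$ are totally ordered by inclusion, one of them, say $I_{\alpha_{j_0}}$, contains all the others, whence $C \subseteq I_{\alpha_{j_0}}$. But then $I_{\alpha_{j_0}}$ contains a circuit and is therefore dependent, contradicting $I_{\alpha_{j_0}} \in \mathcal{I}$; so $I \in \mathcal{I}$. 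Note that the chain need not be well-ordered, which is exactly why one should pass through a finite subset of the union, and this is the one place where finiteness of $C$ enters essentially.

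For the converse I would argue contrapositively: supposing $M$ has an infinite circuit $C$, I exhibit a $\subseteq$-increasing chain of independent sets whose union is dependent. Fix a well-ordering of $C$ of order type $\kappa := |C|$; since $C$ is infinite, $\kappa$ is an infinite cardinal and in particular a limit ordinal. Writing $C = \{c_\xi : \xi < \kappa\}$ accordingly and setting $I_\xi := \{c_\eta : \eta < \xi\}$ for $\xi < \kappa$, the family $(I_\xi : \xi < \kappa)$ is $\subseteq$-increasing and each $I_\xi$ is a proper subset of the minimal dependent set $C$, hence independent; and since $\kappa$ is a limit ordinal, $\bigcup_{\xi < \kappa} I_\xi = C$, which is dependent. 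This contradicts the chain condition, so $M$ has no infinite circuit, i.e.\ $M$ is finitary.

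I do not expect a genuine obstacle here. The only two points that need care are that a finite subset of the union of a chain already lies in a single member of that chain, and, in the converse, the choice of a well-ordering of limit order type so that the initial segments exhaust $C$; both are routine once spelled out.
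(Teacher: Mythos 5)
Your proof is correct. The paper states this equivalence as a background Fact without giving a proof, so there is nothing to compare against, but your argument is the standard one for both directions: the forward implication correctly uses the (explicitly granted, non-trivial) fact that every dependent set in an infinite matroid contains a circuit, together with the observation that a finite subset of the union of a chain already lies in a single member of the chain; and the converse correctly well-orders an infinite circuit $C$ with limit order type so that the proper initial segments form a chain of independent sets exhausting $C$.
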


One example of a finitary matroid is the finite cycle matroid of an infinite graph. 
That is, if ${G = (V,E)}$ is a connected infinite graph, then we can define a matroid~${M(G) = (E,\mathcal{I})}$ by letting~${I \in \mathcal{I}}$ if and only if~$I$ does not contain any finite cycle\footnote{The so-called topological cycles of a graph can be infinite and define a matroid. In the finite cycle matroid only the finite such cycles (i.e., the usual graph theoretic cycles) are circuits.} of~$G$. 
It is straightforward to show that~${(E,\mathcal{I})}$ is an infinite matroid, and that the following facts are true:
\begin{itemize}
    \item A set of edges~${I \subseteq E}$ is independent if and only if it is a forest;
    \item A set of edges~$B$ is a base if and only if it is a spanning tree;
    \item A set of edges~$C$ is a circuit if and only if it is a finite cycle.
\end{itemize}

As with finite matroids, there is a notion of duality for infinite matroids. 
The \emph{dual} of a matroid~${M}$ is the matroid~${M^*}$ on the same edge set whose bases are the complements of the bases of~$M$. 
A matroid is called \emph{cofinitary} if its dual is finitary.

Given a matroid~${M = (E,\mathcal{I})}$ and an~${X \subseteq E}$, the \emph{restriction} of~$M$ to~$X$ is the matroid ${(X,\mathcal{I} \cap \mathcal{P}(X))}$ and it is denoted by~${M \restrict X}$.
For the restriction of~$M$ to~${E \setminus X}$ we also write ${M \setminus X}$ and call it the minor obtained by the \emph{deletion} of~$X$. 
We call the matroid ${(M^* \restrict X)^*}$ the minor of~$M$ obtained by the \emph{contraction} of~$X$ and denote it by~${M \slash X}$.
For the matroid obtained by the contraction of~${E \setminus X}$ we write~${M{.}X}$ and call it the \emph{contraction of~$M$ onto~$X$}. 
It is shown in~\cite{BDKW13} that all of these structures are indeed matroids and moreover, for every disjoint ${X, Y\subseteq E}$, 
${(M \slash X) \setminus Y = (M \setminus Y) \slash X}$. 
The \emph{minors} of~$M$ are the matroids of the form ${(M \slash X) \setminus Y}$ as above. 
We note that the class of finitary (cofinitary) matroids is closed under taking minors.

We also extend our notations to families of matroids. 
For a family~${\mathcal{M} = ( M_i \colon i \in K )}$ of matroids on the same ground set~$E$, we write 
${\mathcal{M} \setminus X}$, 
${\mathcal{M} \restrict X}$, 
${\mathcal{M} \slash X}$ or 
${\mathcal{M}{.}X}$, 
for the families 
${( M_i \setminus X \colon i \in K)}$, 
${( M_i \restrict X \colon i \in K)}$, 
${( M_i \slash X \colon i \in K)}$ or  
${( M_i {.} X \colon i \in K)}$ respectively.

If~$M_i$ is a matroid on~$E_i$ for~${i \in K}$, 
then the \emph{direct sum} ${\bigoplus_{i \in K} M_i}$ of the matroids~$M_i$ is the matroid on the disjoint union ${E := \bigdcup_{i \in K} E_i}$ of the sets~$E_i$ 
in which~${I \subseteq E}$ is independent if and only if~${I \cap E_i}$ is independent in~$M_i$ for each~${i \in K}$. 

For a matroid~${M = (E, \mathcal{I})}$ and~${X \subseteq E}$, 
we say the matroid~${M' := M \setminus X \oplus (X, \{\emptyset\})}$ is obtained from~$M$ by \emph{declaring the edges in~$X$ to be loops}. 
It is simple to check that this is a matroid, and we note in particular that each~${e \in X}$ is a loop in~$M'$. 

We say~${X \subseteq E}$ \emph{spans}~${e \in E}$ in matroid~$M$ if either~${e \in X}$ or there exists a circuit~${C \ni e}$ with~${C-e \subseteq X}$. 
We denote the set of edges spanned by~$X$ in~$M$ by~$\mathsf{span}_{M}(X)$. 
The operator~$\mathsf{span}_M$ is clearly extensive and increasing, and we note that it is also idempotent. 
An ${S \subseteq E}$ is \emph{spanning} in~$M$ if~${\mathsf{span}_{M}(S) = E}$. 

The following is the dual statement to Fact~\ref{Fact-finitary}.

\begin{fact}\label{Fact-cofinitary}
    A matroid is cofinitary if and only if for every $\subseteq$-decreasing chain of spanning sets the intersection of the chain is also spanning. 
\end{fact}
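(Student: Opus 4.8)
The plan is to deduce this immediately from Fact~\ref{Fact-finitary} by dualising, the bridge being the elementary observation that in any matroid~${M=(E,\mathcal{I})}$ a set~${S\subseteq E}$ is spanning in~$M$ if and only if its complement~${E\setminus S}$ is independent in the dual~$M^*$. First I would record this equivalence. If~$S$ is spanning, apply axiom~(4) with~${X=S}$ to obtain a~$\subseteq$-maximal independent subset~$B_S$ of~$S$; every~${e\in S\setminus B_S}$ lies in a circuit contained in~${B_S+e}$, so~${S\subseteq\mathsf{span}_M(B_S)}$, and since~$\mathsf{span}_M$ is increasing and idempotent and~$S$ is spanning we get~${\mathsf{span}_M(B_S)=E}$; a spanning independent set is a base (any proper independent extension~${B_S+e}$ would contain a circuit through~$e$, a contradiction), so~$S$ contains a base~$B$ of~$M$, whence~${E\setminus S\subseteq E\setminus B}$ is contained in a base of~$M^*$ and is independent in~$M^*$ by downward closure. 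Conversely, if~${E\setminus S}$ is independent in~$M^*$, extend it by axiom~(4) to a base~${E\setminus B}$ of~$M^*$; then~${B\subseteq S}$ is a base of~$M$, which spans~$E$, so~$S$ is spanning.

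Next I would observe that complementation is a bijection between the~$\subseteq$-decreasing chains of spanning sets of~$M$ and the~$\subseteq$-increasing chains of independent sets of~$M^*$, and that it exchanges intersections with unions: for a decreasing chain~${(S_\alpha)}$ of spanning sets, the sets~${E\setminus S_\alpha}$ form an increasing chain of independent sets of~$M^*$, and~${\bigcup_\alpha(E\setminus S_\alpha)=E\setminus\bigcap_\alpha S_\alpha}$; conversely every increasing chain of independent sets of~$M^*$ arises this way. Combined with the previous paragraph, the intersection~${\bigcap_\alpha S_\alpha}$ is spanning in~$M$ precisely when~${\bigcup_\alpha(E\setminus S_\alpha)}$ is independent in~$M^*$.

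Putting these together, the condition in the Fact --- that the intersection of every~$\subseteq$-decreasing chain of spanning sets of~$M$ is spanning --- is equivalent to the condition that the union of every~$\subseteq$-increasing chain of independent sets of~$M^*$ is independent in~$M^*$, which by Fact~\ref{Fact-finitary} holds exactly when~$M^*$ is finitary, i.e.\ when~$M$ is cofinitary, as desired. I do not expect a real obstacle here; the one place that needs a little care is the first paragraph, namely checking that in the infinite setting ``$S$ is spanning'' coincides with ``$S$ contains a base of~$M$'' --- but this rests only on axiom~(4) and the idempotence of~$\mathsf{span}_M$, with no appeal to finiteness, so it carries over verbatim.
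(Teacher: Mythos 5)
Your proof is correct and takes exactly the route the paper intends: the paper gives no explicit argument, merely remarking that Fact~\ref{Fact-cofinitary} is ``the dual statement to Fact~\ref{Fact-finitary}'', and your write-up is precisely the careful dualisation (via ``$S$ is spanning in~$M$ iff~${E\setminus S}$ is independent in~$M^*$'' and the complementation bijection between decreasing chains of spanning sets and increasing chains of independent sets) that this remark alludes to.
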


For a~${B \subseteq E}$ the following are equivalent:

\begin{itemize}
    \item $B$ is a maximal independent set,
    \item $B$ is a minimal spanning set,
    \item $B$ is an independent spanning set.
\end{itemize}

\begin{fact}
    \label{Fact-base}
    \cite{BDKW13}*{Lemma~3.7}
    If $B_0$, $B_1$ are bases of a matroid and ${\left|B_0\setminus B_1\right|<\aleph_0}$, then ${\left|B_0\setminus B_1\right|= 
\left|B_1\setminus B_0\right|}$.  
\end{fact}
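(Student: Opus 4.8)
The plan is to prove the statement by induction on the finite number $n := |B_0 \setminus B_1|$, using only the exchange axiom~(3) together with the elementary properties of the span operator recorded above. The base case $n = 0$ is immediate: then $B_0 \subseteq B_1$, and since $B_0$ is a maximal independent set while $B_1$ is independent, we must have $B_0 = B_1$, so $|B_1 \setminus B_0| = 0$ as well.

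For the inductive step, assume $|B_0 \setminus B_1| = n+1 \geq 1$ and fix some $x \in B_0 \setminus B_1$. The idea is to replace $B_0$ by another base $B_0'$ with $|B_0' \setminus B_1| = n$ and then invoke the induction hypothesis. To produce $B_0'$, I would apply axiom~(3) to the independent set $B_0 - x$ (which is not maximal, being properly contained in the independent set $B_0$) and the maximal independent set $B_1$: this yields $y \in B_1 \setminus (B_0 - x)$ with $(B_0 - x) + y$ independent, and since $x \notin B_1$ we get $y \neq x$, hence $y \in B_1 \setminus B_0$. Set $B_0' := B_0 - x + y$.

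The one point requiring genuine care — and the expected main obstacle — is that $B_0'$ is in fact a \emph{base}, not merely an independent set of the same size as $B_0$ (in an infinite matroid equicardinality with a base does not suffice). For this I would argue that $x \in \mathsf{span}_M(B_0')$: otherwise $B_0' + x = B_0 + y$ would be independent, contradicting that $B_0$ is a maximal independent set avoiding $y$. Since $\mathsf{span}_M$ is monotone and idempotent, $x \in \mathsf{span}_M(B_0')$ gives $\mathsf{span}_M(B_0') = \mathsf{span}_M(B_0' + x) = \mathsf{span}_M(B_0 + y) \supseteq \mathsf{span}_M(B_0) = E$, so $B_0'$ is an independent spanning set and hence a base. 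A short bookkeeping step then finishes the argument: because $y \in B_1$ we have $B_0' \setminus B_1 = (B_0 \setminus B_1) - x$, so $|B_0' \setminus B_1| = n$; applying the induction hypothesis to the pair $B_0', B_1$ gives $|B_1 \setminus B_0'| = n$, in particular finite; and because $x \notin B_1$ we have $B_1 \setminus B_0 = (B_1 \setminus B_0') + y$ with $y \notin B_1 \setminus B_0'$, whence $|B_1 \setminus B_0| = n+1 = |B_0 \setminus B_1|$, as required.
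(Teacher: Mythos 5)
Your proof is correct. Note, however, that the paper does not prove this statement: it is cited verbatim from \cite{BDKW13}*{Lemma~3.7}, so there is no in-paper argument to compare against. Your self-contained proof by induction on $n = |B_0 \setminus B_1|$ is sound, and in particular you correctly identify and resolve the one subtlety that distinguishes the infinite setting from the finite one, namely that after the exchange one must \emph{verify} that $B_0' = B_0 - x + y$ is again a base rather than appeal to equicardinality. Your argument there is exactly right: if $B_0'$ failed to span $x$, then (since an independent set that does not span an element remains independent after adding it, which follows immediately from the circuit-based definition of $\mathsf{span}$ together with the fact that every dependent set contains a circuit) the set $B_0' + x = B_0 + y$ would be independent, contradicting the maximality of $B_0$; and once $x \in \mathsf{span}_M(B_0')$, monotonicity and idempotency of $\mathsf{span}_M$ give $\mathsf{span}_M(B_0') = \mathsf{span}_M(B_0 + y) = E$. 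The bookkeeping at the end is also correct, using $x \notin B_1$ and $y \in B_1 \cap B_0'$ to shift both difference sets by exactly one element. One could alternatively derive the claim that $B_0'$ is a base from Fact~\ref{Fact-unique-circ} in the paper by first noting $y \in \mathsf{span}_{M}(B_0)$ and considering the fundamental circuit $C(y, B_0)$, but that requires a separate argument that $x \in C(y,B_0)$, so your direct route is cleaner and uses less machinery.
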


\begin{fact}\label{Fact-unique-circ}
    If~$I$ is independent and spans~${e \notin I}$, then there is a unique circuit~${C(e,I)}$ contained in~${I+e}$. 
    For every~${f \in C(e,I)}$, the set~${I-f+e}$ is independent and spans the same set as~$I$.  
\end{fact}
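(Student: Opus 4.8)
The statement splits into three parts: existence of a circuit in $I+e$, its uniqueness, and the exchange property. Existence is immediate and uniqueness is the only part with any content; the exchange property then follows formally from uniqueness and the properties of $\mathsf{span}_M$ recorded just before the statement (extensive, increasing, idempotent).

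\emph{Existence.} By the definition of spanning, since $I$ spans $e\notin I$ there is a circuit $C$ with $e\in C$ and $C-e\subseteq I$, so $C\subseteq I+e$. \emph{Uniqueness.} Suppose $C$ and $C'$ are circuits contained in $I+e$. Each contains $e$: otherwise $C$ (or $C'$) would be a dependent subset of the independent set $I$, contradicting that $\mathcal{I}$ is downward closed. Assume for contradiction $C\neq C'$. Since circuits form an antichain under inclusion (being minimal dependent sets), there is $z\in C\setminus C'$, and necessarily $z\neq e$. Applying the circuit elimination axiom for infinite matroids (valid since axioms (1)--(4) are cryptomorphic to the circuit axioms, see~\cite{BDKW13}) to the circuits $C,C'$, their common element $e$, and $z$, we obtain a circuit $C''$ with $z\in C''\subseteq (C\cup C')-e\subseteq I$, contradicting the independence of $I$. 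Hence $C=C'$; write $C(e,I)$ for this unique circuit.

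\emph{Exchange property.} Fix $f\in C(e,I)$ with $f\neq e$, so $f\in I$. First, $I-f+e$ is independent: otherwise it contains a circuit $D$, and since $D\subseteq I-f+e\subseteq I+e$ with $I-f$ independent we get $e\in D$, so uniqueness forces $D=C(e,I)$, which is impossible as $f\in C(e,I)$ but $f\notin I-f+e$. Second, $\mathsf{span}_M(I-f+e)=\mathsf{span}_M(I)$: for ``$\subseteq$'', since $e\in\mathsf{span}_M(I)$ and $\mathsf{span}_M$ is increasing and idempotent, $\mathsf{span}_M(I-f+e)\subseteq\mathsf{span}_M(I+e)\subseteq\mathsf{span}_M(\mathsf{span}_M(I))=\mathsf{span}_M(I)$; for ``$\supseteq$'', note $C(e,I)-f\subseteq(I+e)-f=I-f+e$ and $C(e,I)$ is a circuit through $f$, so $C(e,I)-f$ spans $f$, giving $f\in\mathsf{span}_M(I-f+e)$, hence $I\subseteq\mathsf{span}_M(I-f+e)$ and so $\mathsf{span}_M(I)\subseteq\mathsf{span}_M(I-f+e)$ by idempotence.

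\textbf{Main obstacle.} The only non-routine ingredient is the uniqueness of $C(e,I)$, which I reduce to circuit elimination for infinite matroids. If one prefers to stay strictly within the independence axioms, the needed weak form (two circuits, one common element) can be derived directly from (3) and (4): restrict $M$ to $C\cup C'$, observe $(C\cup C')-e$ would be a base of that restriction if no circuit were contained in it, and extend $C-z$ to a base of the restriction to reach a contradiction. Everything else is bookkeeping with the circuit/base definitions and the span operator.
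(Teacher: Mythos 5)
The paper does not prove Fact~\ref{Fact-unique-circ}; it is recorded as standard background, so there is no proof in the paper to compare against. Your argument is correct. Uniqueness does indeed reduce to circuit elimination, which is valid for infinite matroids: it is the special case $X=\{e\}$ of the circuit axiom (C3) of~\cite{BDKW13}. Your elementary fallback within the independence axioms is also sound, though the contradiction you only gesture at should be spelled out: if $(C\cup C')-e$ contained no circuit it would be a base of $M\restrict(C\cup C')$; extending $C-z$ to a base $B$ of this restriction gives $e\in B$, $z\notin B$, and $B\setminus\left((C\cup C')-e\right)=\{e\}$, so Fact~\ref{Fact-base} forces $\left((C\cup C')-e\right)\setminus B$ to be a singleton, necessarily $\{z\}$; hence $B=(C\cup C')-z\supseteq C'$, contradicting the independence of~$B$. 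One trivial omission in the exchange part: the statement quantifies over all $f\in C(e,I)$, but you silently take $f\neq e$; the case $f=e$ is vacuous since then $I-f+e=I$.
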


\section{Preparatory Lemmas}
\label{sec_prep}

To allow ourselves some extra flexibility, we will broaden the concept for a base covering and a base packing slightly, and define covering and packing. 
Let~${\mathcal{M} = (M_i \colon i \in K)}$ be a family of arbitrary matroids (we will not restrict our scope to finitary-cofinitary families unless explicitly stated) on the same ground set~$E$. 
A family~${(R_i \colon i \in K)}$ of subsets of~$E$ is called a \emph{covering} of~$\mathcal{M}$ if~${\bigcup_{i \in K} R_i = E}$ and~$R_i$ is independent in~$M_i$ for all~${i \in K}$. 
Similarly, ${(P_i \colon i \in K)}$ is a \emph{packing} of~$\mathcal{M}$ if the~$P_i$ are pairwise disjoint and~$P_i$ is spanning in~$M_i$.
It is clear that there exists a covering (or packing, respectively) 
if and only if there is a base covering (or base packing, respectively). 

For brevity, when the family~$\mathcal{M}$ is clear from context we will simply write a \emph{covering of~${X \subseteq E}$} (or \emph{packing of~$X$}, respectively) to mean a covering (or packing, respectively) of the family~${\mathcal{M} \restrict X}$.

\subsection{Augmenting paths}
\label{subsec:aug-paths}
The algorithmic proof of the rank function formula for sums of finite matroids by Edmonds and Fulkerson (see~\cite{edmonds1965transversals}) is a variation of the so-called `augmenting path method' and has turned out to be an efficient tool in the theory of infinite matroids as well. 
The difficulty of the generalisation to infinite matroids lies mainly in finding the ``right'' formulation. 

Suppose that ${(I_i\colon i\in K)}$ is a family of pairwise disjoint sets such that~$I_i$ is independent in~$M_i$ for all~${i \in K}$ and suppose~${e \in E \setminus \bigcup_{i\in K} I_i}$. 
Roughly speaking, the next lemma tells that either 
there is another such family~${(J_i\colon i\in K)}$ of independent sets covering~${\bigcup_{i\in K}I_i+e}$ which is ``finitely close'' to the original family~${(I_i\colon i\in K)}$ 
or there is a ``witness'' for the non-existence of such sets~$J_i$. 
Note that in contrast to the case of finite matroids, this witness does not necessarily rule out the existence of a covering of~${\bigcup_{i\in K} I_i+e}$. 

\begin{lemma}\label{lem: augpath}
    Let ${(I_i\colon i \in K)}$ be a family of pairwise disjoint sets such that~$I_i$ is independent in~$M_i$ for all ${i \in K}$, and let~${e \in E \setminus \bigcup_{i\in K} I_i}$. 
    Then exactly one of the following two statements holds.
    \begin{enumerate}
        [label=(\arabic*)]
        \item \label{item: primal} There is a family of sets ${(J_i\colon i\in K)}$ and a ${k \in K}$ with the following properties:
        \begin{enumerate}
            [label=(\alph*)]
            \item $J_i$ is independent in~$M_i$, 
            \item ${J_i \cap J_j = \emptyset}$ for~${i \neq j \in K}$, 
            \item ${\bigcup_{i\in K} J_i = \bigcup_{i\in K} I_i + e}$, 
            \item ${\sum_{i\in K} \left|I_i \vartriangle J_i\right|<\aleph_0}$,  
            \item ${\mathsf{span}_{M_i}(J_i) = \mathsf{span}_{M_i}(I_i)}$ for ${i \neq k}$ and ${\mathsf{span}_{M_k} (J_k - f) = \mathsf{span}_{M_k} (I_k)}$ for \linebreak some~${f \in J_k}$. 
        \end{enumerate}
        \item \label{item: dual} There exists an ${X \subseteq \bigcup_{i\in K} I_i}$ for which ${I_i\cap X}$ spans~${X+e}$ in~$M_i$ for all~${i \in K}$. 
    \end{enumerate}
\end{lemma}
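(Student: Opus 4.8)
The plan is to run the classical \emph{augmenting path} argument, adapted to infinite matroids: statement \ref{item: primal} will record the success of an augmentation that enlarges $\bigcup_{i\in K} I_i$ by $e$, while statement \ref{item: dual} will be the obstruction certifying its failure. Write $I := \bigcup_{i \in K} I_i$. First I would dispose of a trivial case: if $e \notin \mathsf{span}_{M_j}(I_j)$ for some $j$, then $I_j + e$ is independent in $M_j$, so \ref{item: primal} holds with $J_j := I_j + e$, $J_i := I_i$ for $i \neq j$, $k := j$, $f := e$; so from now on assume $e \in \mathsf{span}_{M_j}(I_j)$ for every $j \in K$. Next, form an auxiliary digraph $D$ on vertex set $I + e$, with an arc $u \to w$ whenever some $j \in K$ satisfies $u \notin I_j$, $w \in I_j$, $u \in \mathsf{span}_{M_j}(I_j)$ and $w \in C_{M_j}(u, I_j)$; call $z \in I$ a \emph{sink} if $z \notin \mathsf{span}_{M_j}(I_j)$ for some $j$. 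The dichotomy will be according to whether or not $D$ contains a directed path from $e$ to a sink.

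If there is such a path, I would fix a shortest one $e = v_0, v_1, \dots, v_m$, with the arc $v_{l-1} \to v_l$ witnessed by $M_{j_l}$ and with $M_{j_{m+1}}$ witnessing that $v_m$ is a sink, and perform all the fundamental exchanges prescribed by the path simultaneously: for each $j$ set $I_j' := \bigl(I_j \setminus \{v_l : j_l = j\}\bigr) \cup \{v_{l-1} : j_l = j\}$, then put $J_{j_{m+1}} := I_{j_{m+1}}' + v_m$ and $J_i := I_i'$ for the remaining $i$. Minimality of the path rules out the ``shortcut'' arc $v_{l-1} \to v_{l'}$ (which would exist if $l < l'$ with $j_l = j_{l'} = j$ and $v_{l'} \in C_{M_j}(v_{l-1}, I_j)$), which is precisely the triangularity hypothesis of the sequential matroid exchange lemma; that lemma then gives that each $I_j'$ is independent in $M_j$ with $\mathsf{span}_{M_j}(I_j') = \mathsf{span}_{M_j}(I_j)$, while the sink property $v_m \notin \mathsf{span}_{M_{j_{m+1}}}(I_{j_{m+1}})$ (note $j_{m+1} \neq j_m$, the index with $v_m \in I_{j_m}$) makes $J_{j_{m+1}}$ independent in $M_{j_{m+1}}$ with $\mathsf{span}_{M_{j_{m+1}}}(J_{j_{m+1}} - v_m) = \mathsf{span}_{M_{j_{m+1}}}(I_{j_{m+1}})$. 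Tracking where each $v_l$ lands shows the $J_i$ are pairwise disjoint with $\bigcup_i J_i = I + e$, and only finitely many $i$ have $I_i \neq J_i$, each differing by finitely many elements, so $\sum_i |I_i \vartriangle J_i| < \aleph_0$; hence \ref{item: primal} holds with $k := j_{m+1}$ and $f := v_m$. If instead there is no such path, let $X$ be the set of vertices reachable from $e$ in $D$, with $e$ removed, so $X \subseteq I$. Fix $i \in K$ and $z \in X + e$: if $z \in I_i$ then trivially $z \in I_i \cap X \subseteq \mathsf{span}_{M_i}(I_i \cap X)$; otherwise $z = e$, or $z$ lies in some $I_j$ with $j \neq i$, and since neither $e$ nor $z$ is a reachable sink we get $z \in \mathsf{span}_{M_i}(I_i)$, so the fundamental circuit $C_{M_i}(z, I_i)$ exists and each element of $C_{M_i}(z, I_i) - z$ is the head of an arc out of $z$ (or out of $e$), hence lies in $X$, hence in $I_i \cap X$, giving $z \in \mathsf{span}_{M_i}(I_i \cap X)$. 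Thus $I_i \cap X$ spans $X + e$ in $M_i$ for every $i$, i.e., \ref{item: dual} holds.

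To finish, I would show \ref{item: primal} and \ref{item: dual} cannot both hold. Suppose they did, with data $(J_i), k, f$ and $X$. By \ref{item: dual} each $I_i \cap X$ is independent and spanning, hence a base, in $M_i \restrict (X + e)$. Since $\sum_i |I_i \vartriangle J_i| < \aleph_0$, each independent set $J_i \cap (X+e)$ of $M_i \restrict (X+e)$ extends, inside $(J_i \cap (X+e)) \cup (I_i \cap X)$, to a base $\widehat{B}_i$ of $M_i \restrict (X+e)$, and then $\widehat{B}_i \setminus (J_i \cap (X+e)) \subseteq (I_i \cap X) \setminus J_i$ is finite and empty for all but finitely many $i$. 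As $\widehat{B}_i$ and $I_i \cap X$ are bases of $M_i \restrict (X+e)$ with finite symmetric difference, Fact~\ref{Fact-base} gives a finite number $t_i := |\widehat{B}_i \setminus (I_i \cap X)| = |(I_i \cap X) \setminus \widehat{B}_i|$, zero for all but finitely many $i$. Now $\widehat{B}_i \setminus (I_i \cap X) = (J_i \cap (X+e)) \setminus (I_i \cap X) = (\{e\} \cap J_i) \cup ((J_i \cap X) \setminus I_i)$, so $\sum_i t_i = 1 + \sum_i |(J_i \cap X) \setminus I_i|$ (the $1$ since $e$ lies in exactly one $J_i$), while $(I_i \cap X) \setminus \widehat{B}_i \subseteq (I_i \cap X) \setminus J_i$ gives $\sum_i t_i \le \sum_i |(I_i \cap X) \setminus J_i|$. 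Finally, $(I_i \cap X)_{i \in K}$ and $(J_i \cap X)_{i \in K}$ are both partitions of $X$ (using $X \subseteq I$ and $\bigcup_i J_i = I+e$), so $\sum_i |(I_i \cap X) \setminus J_i| = \sum_i |(J_i \cap X) \setminus I_i|$, each side counting the elements of $X$ whose block changes; this common value $m$ is finite because $\sum_i |I_i \vartriangle J_i| < \aleph_0$. Combining, $1 + m \le m$, a contradiction, which together with the dichotomy above gives ``exactly one''.

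I expect the principal obstacle to be the forward direction: justifying that the exchanges prescribed by a \emph{shortest} augmenting path can be performed all at once, in particular when an index $j$ is used by several arcs of the path and possibly also equals $j_{m+1}$, and that a sequential exchange lemma of the required strength is available even though the fundamental circuits $C_{M_j}(u,I_j)$ may be infinite (the matroids here are arbitrary). By contrast, the bookkeeping showing $(J_i)$ is a near-disjoint cover, the check that the reachable set $X$ contains all the relevant fundamental circuits, and the counting in the mutual-exclusivity part are routine once the digraph is set up correctly and the finiteness built into \ref{item: primal} is exploited.
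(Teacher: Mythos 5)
Your proof is correct and takes essentially the same augmenting-path approach as the paper: the auxiliary digraph, the shortest-path/no-shortcut reduction combined with the simultaneous-exchange lemma (the paper's Proposition~\ref{prop:simult change}, which is indeed available for arbitrary matroids since only finitely many exchanges are performed), and the reachable set $X$ as the obstruction are all as in the paper's argument. The only differences are cosmetic: the paper absorbs your ``sinks'' into a vertex class $K$ of the digraph rather than disposing of the trivial case separately, and your mutual-exclusivity count is a slightly more careful variant of the paper's pigeonhole argument in that it extends $J_i \cap (X+e)$ to a base inside $(J_i \cap (X+e)) \cup (I_i \cap X)$, making the finiteness needed for Fact~\ref{Fact-base} explicit.
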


\begin{proof}
    Let us recall a well-known observation about exchanging multiple elements simultaneously in an independent set of a matroid.
    
    \begin{prop}\label{prop:simult change}
        Let~$I$ be independent in a fixed matroid, 
        let ${e_0, \dots, e_{n} \in \mathsf{span}(I) \setminus I}$ 
        and~${f_0, \dots, f_{n} \in I}$ 
        with ${f_m \in C(e_m, I)}$ 
        but ${f_m \notin C(e_\ell, I)}$ for~${\ell < m \leq n}$. 
        Then 
        \[
            {\left( I \cup \{e_0,\dots, e_{n}\} \right) \setminus \{f_0,\dots, f_{n}\}}
        \] 
        is independent and spans the same set as~$I$.
    \end{prop}
    
    \begin{proof}
        We use induction on~$n$. 
        The case~${n = 0}$ follows from Fact~\ref{Fact-unique-circ}. 
        Suppose that~${n > 0}$. 
        On the one hand, the set ${I-f_n+e_n}$ is independent and spans the same set as~$I$. 
        On the other hand, ${C(e_m, I-f_n+e_n) = C(e_m, I)}$ for~${m < n}$ because ${f_n \notin C(e_m, I)}$ for~${m < n}$. 
        Hence by using the induction hypothesis for~${I-f_n+e_n}$ and ${e_0,\dots, e_{n-1}, f_0,\dots, f_{n-1}}$ we are done.
    \end{proof}
    
    To show that at least one of~\ref{item: primal} and~\ref{item: dual} holds, we build an auxiliary digraph ${D = (V,A)}$ with ${V = E\cup K}$. 
    For each $i$ and each ${x \in E\setminus I_i}$ we have $(x,i) \in A$ if ${I_i + x}$ is independent in~$M_i$, and otherwise we have $(x,y) \in A$ for each $y \in {C_{M_i}(x, I_i) - x}$.
    
    We call a directed path from~$e$ to ${K \subseteq V}$ an \emph{augmenting path}.
    Suppose first that there is no augmenting path. 
    Let ${X'\subseteq E}$ be the set of vertices that are reachable from~$e$ in~$D$. 
    Clearly ${e \in X'}$ and by the construction of~$D$ if ${g \in X'}$ and ${i \in K}$ then either ${g \in I_i}$ or ${C_{M_i}(g, I_i)}$ is well-defined and a subset of~$X'$. 
    This implies that for ${X := X'-e}$ the set ${I_i \cap X}$ spans~${X+e}$ in~$M_i$ for~${i\in K}$.

    Assume now that there is an augmenting path ${x_0,\dots, x_{n+1}}$ in~$D$ where ${x_0 = e}$ and 
    ${x_{n+1} = k\in K}$. 
    By shortening the path we may assume that there is no ``jumping arc'',
    i.e., ${(x_{\ell}, x_m) \notin A}$ for~${1 \leq \ell+1 < m \leq n}$. 
    We define~$J_i'$ as the symmetric difference of~$I_i$ and \linebreak ${\{ x_m, x_{m+1}\colon x_{m+1}\in I_i \}}$. 
    The non-existence of jumping arcs ensures that Proposition~\ref{prop:simult change} is applicable for this simultaneous exchange at~$I_i$. 
    Hence~$J_i'$ is independent in~$M_i$ and spans the same set as~$I_i$. 
    For ${i \neq k}$ we let ${J_i := J_i'}$. 
    The arc ${(x_n, k) \in A}$ witnesses that ${I_k+x_n}$ is independent in~$M_k$ and hence so is ${J_k'+x_n =: J_k}$ because ${\mathsf{span}_{M_k}(I_k) = \mathsf{span}_{M_k}(J_k')}$. 
    Thus ${f := x_n}$ is suitable.

    Suppose for a contradiction that both~\ref{item: primal} and~\ref{item: dual} hold. 
    Then ${I_i^{ X } := I_i \cap X}$ is a base of ${M_i \restrict (X+e)}$. 
    The edge~$e$ witnesses that the independent sets ${J_i^{ X } := J_i\cap (X+e)}$ cover strictly more elements of~$X+e$ than the bases~${I_i^{X}}$. 
    Since $\sum_{i\in K} \left| I_i \vartriangle J_i\right|<\aleph_0$, by the pigeonhole principle there is some $ j\in K $ with ${\left|J_j^{X}\setminus I_j^{ X }\right|>\left|I_j^{ X }\setminus J_j^{ X }\right|}$ which contradicts Fact~\ref{Fact-base} after extending~$J_j^{X}$ to a base of ${M_j\restrict (X+e)}$ in an arbitrary way. 
\end{proof}

\subsection{Tight sets}
\label{subsec:tight sets}
A family of matroids~$\mathcal{M}$ is \emph{tight} if~$\mathcal{M}$ admits a covering and for every such covering~${(R_i \colon i \in K)}$ the set~$R_i$ is spanning in~$M_i$ for~${i \in K}$. 
To prove the tightness of a family~$\mathcal{M}$ it is clearly enough to consider only \emph{disjoint coverings}, i.e., coverings consisting of pairwise disjoint sets. 

\begin{prop}\label{prop: tight fulldef}
    Each covering of a tight matroid family~$\mathcal{M}$ is a partitioning of~$\mathcal{M}$.   
\end{prop}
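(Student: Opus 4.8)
The plan is to start from an arbitrary covering ${(R_i \colon i \in K)}$ of~$\mathcal{M}$ and show it is automatically a partitioning; since a covering already satisfies ${\bigcup_{i\in K} R_i = E}$, the only thing to verify is that the~$R_i$ are pairwise disjoint. The first step is to note that tightness upgrades each~$R_i$ to a base: by the definition of a covering each~$R_i$ is independent in~$M_i$, and by the definition of tightness each~$R_i$ is spanning in~$M_i$; combining these and using that an independent spanning set is a base (one of the three equivalent characterisations of a base recalled just before Fact~\ref{Fact-base}), we get that ${(R_i \colon i \in K)}$ is in fact a base covering.

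The second and main step is a contradiction argument built on a single local modification. Suppose the~$R_i$ are not pairwise disjoint, so there are distinct ${i,j \in K}$ and an edge ${e \in R_i \cap R_j}$. Delete~$e$ from just one of the two parts containing it: put ${R_j' := R_j - e}$ and ${R_\ell' := R_\ell}$ for ${\ell \neq j}$. Each~$R_\ell'$ is still independent in~$M_\ell$, being a subset of~$R_\ell$, and ${\bigcup_{\ell \in K} R_\ell' = E}$ because the edge~$e$ is still covered, now by ${R_i' = R_i}$. Hence ${(R_\ell' \colon \ell \in K)}$ is again a covering of~$\mathcal{M}$.

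Finally, apply tightness to this new covering: it forces ${R_j' = R_j - e}$ to be spanning in~$M_j$. But~$R_j$ is a base of~$M_j$, equivalently a minimal spanning set of~$M_j$, and ${e \in R_j}$, so ${R_j - e}$ is a proper subset of a minimal spanning set and therefore not spanning in~$M_j$ — a contradiction. I do not expect any genuine obstacle here; the only point that needs care is the bookkeeping in the second step, namely that removing one shared edge from a single part of a covering again yields a covering, which is exactly what lets tightness do the work.
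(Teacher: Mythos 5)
Your proof is correct and is essentially identical to the paper's: both observe that tightness upgrades each $R_i$ to a base, and both derive a contradiction from removing a shared edge from one part, which again yields a covering whose modified part cannot be spanning since a base is a minimal spanning set.
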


\begin{proof}
    Let ${(R_i \colon i \in K)}$ be a covering of the tight matroid family~$\mathcal{M}$. 
    Then each~$R_i$ is independent and spanning in~$M_i$ which means it is a base of it. 
    If~${e \in R_i \cap R_j}$ for some~${e \in E}$ and~${i \neq j \in K}$ then by replacing~$R_i$ with~${R_i - e}$ we obtain a covering of~$\mathcal{M}$ 
    in which~${R_i - e}$ is not spanning in~$M_i$, contradicting the tightness of~$\mathcal{M}$. 
\end{proof}

A set~${X \subseteq E}$ is \emph{tight} with respect to~$\mathcal{M}$ (shortly $\mathcal{M}$-tight), if the family ${\mathcal{M} \restrict X}$ is tight.
Observe that the empty set is always tight.  

As mentioned, Lemma~\ref{lem: augpath} does not give a witness for the impossibility of covering the one extra edge~$e$. 
Using the idea of tight sets we can give such a witness.

\begin{lemma}\label{lem: one more cover}
    Let ${e \in E}$ and suppose that ${\mathcal{M}\restrict (E-e)}$ admits a covering. 
    Then~$\mathcal{M}$ admits a covering if and only if there is no $\mathcal{M}$-tight set~$X\subseteq E-e$ for which ${e \in \mathsf{span}_{M_i}(X)}$ for every~${i \in K}$. 
\end{lemma}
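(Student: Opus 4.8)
The plan is to prove the two directions separately; the ``only if'' direction is the easy one and the ``if'' direction is where the real content lies.

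\textbf{($\Rightarrow$)} Suppose $\mathcal{M}$ admits a covering $(R_i \colon i \in K)$, and suppose towards a contradiction that there is an $\mathcal{M}$-tight set $X \subseteq E-e$ with $e \in \mathsf{span}_{M_i}(X)$ for every $i \in K$. First I would restrict everything to $X$: the family $(R_i \cap X \colon i \in K)$ is a covering of $\mathcal{M}\restrict X$, hence by tightness of $X$ each $R_i \cap X$ is spanning in $M_i\restrict X$, so in particular $R_i \cap X$ spans $X$ in $M_i$. Since $\mathsf{span}_{M_i}$ is idempotent and $e \in \mathsf{span}_{M_i}(X) = \mathsf{span}_{M_i}(R_i \cap X)$, we get that $R_i \cap X$ already spans $X + e$ in $M_i$ for every $i$. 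Now compare $(R_i \cap X \colon i \in K)$ with the ``improved'' covering one obtains by applying Lemma \ref{lem: augpath} inside the family $\mathcal{M}\restrict(X+e)$ to the disjointified independent sets contained in the $R_i \cap X$: the tight set $X$ is exactly the object described in alternative \ref{item: dual} of that lemma (after passing to a disjoint subcovering), so alternative \ref{item: primal} fails, i.e.\ $X+e$ cannot be covered by a family of independent sets extending the given disjoint one up to a finite symmetric difference. But a covering of $\mathcal{M}\restrict(X+e)$ certainly exists, namely $(R_i \cap (X+e) \colon i \in K)$; I need to square this with the fact that \ref{item: primal} fails. The clean way: apply Lemma \ref{lem: augpath} directly to a disjoint covering of $X$ (which exists because $\mathcal{M}\restrict X$ is tight, so by Proposition \ref{prop: tight fulldef} any covering of it is a partitioning, giving disjointness for free) together with the extra edge $e$; alternative \ref{item: dual} is incompatible with the existence of any covering of $X+e$ by the pigeonhole/Fact \ref{Fact-base} argument at the end of the proof of Lemma \ref{lem: augpath}, yet $(R_i \cap(X+e))$ is such a covering after disjointification — contradiction.

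\textbf{($\Leftarrow$)} Assume no such $\mathcal{M}$-tight set exists; I want to build a covering of $\mathcal{M}$. Start from a covering of $\mathcal{M}\restrict(E-e)$, disjointify it to a partition $(I_i \colon i \in K)$ of $E-e$ with each $I_i$ independent in $M_i$, and apply Lemma \ref{lem: augpath} to $(I_i \colon i \in K)$ and $e$. If alternative \ref{item: primal} holds we obtain independent sets $(J_i \colon i \in K)$ covering $(E-e)+e = E$, which is exactly the desired covering of $\mathcal{M}$. So it suffices to rule out alternative \ref{item: dual}: suppose $X \subseteq \bigcup_i I_i = E-e$ is such that $I_i \cap X$ spans $X+e$ in $M_i$ for every $i$. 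The claim I would then prove is that $X$ is $\mathcal{M}$-tight, which contradicts the hypothesis. To see tightness of $X$: $(I_i \cap X \colon i \in K)$ is a disjoint covering of $\mathcal{M}\restrict X$ in which each part spans $X$ (indeed spans $X+e \supseteq X$), so $\mathcal{M}\restrict X$ at least admits a covering. For an arbitrary covering $(S_i \colon i \in K)$ of $\mathcal{M}\restrict X$ I must show each $S_i$ is spanning in $M_i\restrict X$; here I expect to need a minimal-counterexample argument on $X$ — take $X$ minimal among sets witnessing \ref{item: dual} (such a minimal one exists? $X$ need not be finite, so I should instead argue directly). The direct argument: if some covering $(S_i)$ of $\mathcal{M}\restrict X$ had $S_j$ not spanning $X$ in $M_j$, pick $x \in X \setminus \mathsf{span}_{M_j}(S_j)$ and run Lemma \ref{lem: augpath} inside $\mathcal{M}\restrict X$ on a disjointification of $(S_i)$ with the edge... — but this is circular. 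The robust route is instead: observe that \ref{item: dual} for $X$ and $e$ says precisely that $(I_i \cap X)$ cannot be augmented to cover one more element, and by a short argument (using that every covering of $\mathcal{M}\restrict X$ can be compared to the disjoint covering $(I_i\cap X)$ via Lemma \ref{lem: augpath} and Fact \ref{Fact-base}) every covering of $\mathcal{M}\restrict X$ must already be spanning — i.e.\ $X$ is tight.

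\textbf{Main obstacle.} The delicate point, in both directions, is translating between alternative \ref{item: dual} of Lemma \ref{lem: augpath} (a statement about one specific disjoint family failing to be finitely-augmentable) and the definitional notion of an $\mathcal{M}$-tight set (a statement quantifying over \emph{all} coverings of $X$). The bridge is the pigeonhole argument from the end of the proof of Lemma \ref{lem: augpath}: if $I_i \cap X$ spans $X+e$ for all $i$ but some other covering $(S_i)$ of $X$ failed to have $S_j$ spanning, one could disjointify $(S_i)$, compare cardinalities of symmetric differences restricted to $X$, and contradict Fact \ref{Fact-base} — so I expect the heart of the write-up to be exactly this comparison, carried out carefully enough that it works for infinite $X$ (where one passes to finite symmetric differences via Lemma \ref{lem: augpath} before invoking Fact \ref{Fact-base}).
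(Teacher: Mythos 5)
Your ``only if'' direction starts correctly but then goes astray: once you observe that $R_i \cap X$ spans $X+e$ in $M_i$ for every $i$, you are essentially done --- since the $R_i$ cover $E$, the edge $e$ lies in some $R_j$, and $R_j \cap X$ spanning $e$ forces a circuit of $M_j$ inside $R_j$, contradicting independence. That one-line finish is the paper's argument. Instead you detour into Lemma~\ref{lem: augpath}, and the detour contains a false claim: you assert that alternative~\ref{item: dual} of Lemma~\ref{lem: augpath} is ``incompatible with the existence of any covering of $X+e$ by the pigeonhole/Fact~\ref{Fact-base} argument''. That is not what the pigeonhole step proves. It only rules out coverings $(J_i)$ that agree with the given disjoint family up to \emph{finite} symmetric differences --- the paper says so explicitly in the paragraph before the lemma (``Lemma~\ref{lem: augpath} does not give a witness for the impossibility of covering the one extra edge $e$''). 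So the detour is both unnecessary and, as stated, incorrect.

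The real gap is in the ``if'' direction, and to your credit you sense it (you call it the ``main obstacle''), but your proposed fix does not work. You apply Lemma~\ref{lem: augpath} to \emph{one} disjoint covering $(I_i)$ of $E-e$, extract the set $X$ from alternative~\ref{item: dual}, and then need $X$ to be $\mathcal{M}$-tight. That requires \emph{every} covering of $\mathcal{M}\restrict X$ to be spanning, whereas alternative~\ref{item: dual} only tells you something about the particular family $(I_i \cap X)$. Your suggestion --- compare an arbitrary covering $(S_i)$ of $X$ against $(I_i \cap X)$ ``via Lemma~\ref{lem: augpath} and Fact~\ref{Fact-base}'' --- runs into exactly the same obstruction as above: Fact~\ref{Fact-base} gives no information unless the two coverings differ by only finitely much, and two arbitrary coverings of an infinite $X$ need not. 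The missing idea is the paper's cowave construction. The paper sets $W$ equal to the union of all cowaves avoiding $e$ (these are closed under unions), defines $X := (E-e)\setminus W$, and then for an \emph{arbitrary} disjoint covering $(I_i)$ of $X$ applies Lemma~\ref{lem: augpath}: alternative~\ref{item: primal} is excluded because a covering of $X+e$ would combine with a covering of $\mathcal{M}{.}W$ to give a covering of all of $E$, and alternative~\ref{item: dual} yields some $X' \subseteq X$, which must equal $X$ because $W \cup (X\setminus X')$ would otherwise be a strictly larger cowave avoiding $e$. Since $(I_i)$ was arbitrary, $X$ is tight. Without this maximality argument, the set produced by a single run of Lemma~\ref{lem: augpath} simply has no reason to be tight, and the ``if'' direction does not close.
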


\begin{proof}
    Assume that~$X\subseteq E-e$ is an $\mathcal{M}$-tight set with ${e \in \mathsf{span}_{M_i}(X)}$ for every~${i \in K}$. 
    Suppose for a contradiction that $(R_i \colon i\in K)$ is a covering of~$E$ where ${e \in R_j}$ for some~${j \in K}$.
    Since ${(R_i\cap X \colon i\in K)}$ is a covering of the $\mathcal{M}$-tight set $X$, $X \subseteq \mathsf{span}_{M_j}(R_j\cap X)$ 
    and so, since $e\in \mathsf{span}_{M_j}(X) \subseteq \mathsf{span}_{M_j}(R_j\cap X)$ it follows that 
    $e \in \mathsf{span}_{M_j}(R_j\cap X)$. 
    However, $e \in R_j$, contradicting the independence of $R_j$ in $M_j$.
    
    To prove the converse, first we need  a terminology from~\cite{bowler2015matroid}. 
    We call a set~${W \subseteq E}$ a \emph{cowave} with respect to~$\mathcal{M}$ if~${\mathcal{M}{.}W}$ admits a covering. 
    Cowaves are closed under arbitrary large unions:
    indeed, let ${W := \bigcup_{\alpha < \kappa} W_\alpha}$ where~$W_\alpha$ is a cowave for~${\alpha < \kappa}$. 
    We fix a covering~${(R_{\alpha,i} \colon i \in K)}$ of~${\mathcal{M}{.}W_\alpha}$ for every~${\alpha < \kappa}$. 
    For each~${w \in W}$, let~$\alpha_w$ be the smallest ordinal with~${w \in W_{\alpha_w}}$. 
    We define ${R_i := \{ w \in W \colon w \in R_{\alpha_w,i} \}}$ for~${i \in K}$. 
    Then it is easy to see that ${(R_i \colon i \in K)}$ is a 
    covering of~${\mathcal{M}{.}W}$ 
    (for more details see~\cite{bowler2015matroid}*{Lemma 4.3 \& last line, p.~179}).
    
    Suppose that~$\mathcal{M}$ does not admit a covering. 
    Let~$W$ be the union of the cowaves not containing~$e$. 
    We show that ${X := (E-e) \setminus W}$ is an $\mathcal{M}$-tight set with ${e \in 
    \mathsf{span}_{M_i}(X)}$ for every~${i \in K}$, as desired.
    Note that there is no covering of~${(X+e)}$. 
    Indeed, if there were a covering ${(R'_i \colon i \in K)}$ of $(X+e)$ then, since there is a covering ${(Q_i \colon i \in K)}$ of~${\mathcal{M}{.}W}$ and~${W = E \setminus (X+e)}$, 
    it would follow that ${(R'_i \cup Q_i \colon i \in K)}$ is a covering of~$E$, 
    contradicting the assumption that~$\mathcal{M}$ has no covering.
    
    Since ${\mathcal{M} \restrict (E-e)}$ admits a covering, so does ${\mathcal{M} \restrict X}$. 
    Let ${(I_i \colon i\in K)}$ be an arbitrary disjoint covering of~$X$.
    We apply Lemma~\ref{lem: augpath} to ${(I_i \colon i \in K)}$ and~$e$, 
    and note that~\ref{item: primal} cannot occur, since there is no covering of~${(X+e)}$. 
    Hence we get some set ${X' \subseteq X}$ as in~\ref{item: dual}. 
    We claim that~${X' = X}$. 
    Observe that~${X \setminus X'}$ is a cowave in~${\mathcal{M} \restrict (X+e)}$ witnessed by the sets ${I_i \cap (X \setminus X')}$, since ${I_i \cap X'}$ spans ${X' + e}$ in~$M_i$ by Lemma~\ref{lem: augpath}. 
    A covering of~${\mathcal{M}{.}W}$ together with the sets $I_i \cap (X \setminus X')$ show that ${W \cup (X \setminus X')}$ is a cowave in~$\mathcal{M}$. 
    Since~$W$ was the largest cowave it follows that~${X = X'}$. 
    Therefore, ${I_i \cap X' = I_i \cap X = I_i}$ spans ${X' + e = X + e}$ in~$M_i$ for all~${i \in K}$. 
    Since ${(I_i \colon i \in K)}$ was an arbitrary disjoint covering of~$X$, the tightness of~$X$ follows. 
\end{proof}

Using this we can characterise when an edge~$e$ is never contained in~$R_j$, regardless of the covering~${(R_i \colon i\in K)}$ we consider.
\begin{cor}
    \label{cor:tight-char}
    Let ${\mathcal{M} = (M_i \colon i\in K)}$ be a family of matroids on the same ground set~$E$ admitting a covering and 
    let~${e \in E}$. 
    Then for all~${j\in K}$ the following statements are equivalent:
    \begin{enumerate}
        [label=(\roman*)]
        \item\label{item:char1} ${e \notin R_j}$ holds for every covering~${(R_i \colon i\in K)}$;
        \item\label{item:char2} there exists an $\mathcal{M}$-tight set~$X$ such that ${e \in \mathsf{span}_{M_j}(X) \setminus X}$.
    \end{enumerate}
\end{cor}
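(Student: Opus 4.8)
The plan is to prove \ref{item:char2}$\Rightarrow$\ref{item:char1} directly and to reduce \ref{item:char1}$\Rightarrow$\ref{item:char2} to Lemma~\ref{lem: one more cover} by a one-line modification of the family. For \ref{item:char2}$\Rightarrow$\ref{item:char1}: suppose $X$ is $\mathcal{M}$-tight with $e\in\mathsf{span}_{M_j}(X)\setminus X$ and, towards a contradiction, that $(R_i\colon i\in K)$ is a covering of $\mathcal{M}$ with $e\in R_j$. Then $(R_i\cap X\colon i\in K)$ is a covering of $\mathcal{M}\restrict X$, so tightness of $X$ forces $R_j\cap X$ to be spanning in $M_j\restrict X$, i.e.\ $X\subseteq\mathsf{span}_{M_j}(R_j\cap X)$. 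Since $\mathsf{span}_{M_j}$ is increasing and idempotent and $e\notin X$ gives $R_j\cap X\subseteq R_j-e$, this yields $e\in\mathsf{span}_{M_j}(X)\subseteq\mathsf{span}_{M_j}(R_j\cap X)\subseteq\mathsf{span}_{M_j}(R_j-e)$, contradicting the independence of $R_j$ in $M_j$.

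For \ref{item:char1}$\Rightarrow$\ref{item:char2} I would introduce the family $\mathcal{M}^{(j)}=(M_i^{(j)}\colon i\in K)$ on $E$ with $M_j^{(j)}:=M_j$ and, for $i\neq j$, with $M_i^{(j)}$ obtained from $M_i$ by declaring $e$ to be a loop. The first step is the equivalence: $\mathcal{M}^{(j)}$ admits a covering if and only if $\mathcal{M}$ admits a covering $(R_i\colon i\in K)$ with $e\in R_j$. The implication ``$\Leftarrow$'' follows by deleting $e$ from every $R_i$ with $i\neq j$; for ``$\Rightarrow$'' one observes that in any covering of $\mathcal{M}^{(j)}$ the edge $e$ must lie in $R_j$ (being a loop, $e$ belongs to no independent set of $M_i^{(j)}$ for $i\neq j$), and that independence in $M_i^{(j)}$ for subsets avoiding $e$ coincides with independence in $M_i$. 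Since $\mathcal{M}$ admits a covering by hypothesis, \ref{item:char1} says precisely that $\mathcal{M}^{(j)}$ admits no covering.

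Now apply Lemma~\ref{lem: one more cover} to $\mathcal{M}^{(j)}$ and the edge $e$; its hypothesis holds because $\mathcal{M}^{(j)}\restrict(E-e)=\mathcal{M}\restrict(E-e)$, which admits a covering (restrict any covering of $\mathcal{M}$). Thus $\mathcal{M}^{(j)}$ admits no covering if and only if there is an $\mathcal{M}^{(j)}$-tight set $X\subseteq E-e$ with $e\in\mathsf{span}_{M_i^{(j)}}(X)$ for all $i\in K$. Two observations translate this into \ref{item:char2}: first, for $X\subseteq E-e$ we have $\mathcal{M}^{(j)}\restrict X=\mathcal{M}\restrict X$, so ``$\mathcal{M}^{(j)}$-tight'' and ``$\mathcal{M}$-tight'' agree on subsets of $E-e$; second, for $i\neq j$ the edge $e$ is a loop of $M_i^{(j)}$ and hence lies in $\mathsf{span}_{M_i^{(j)}}(X)$ automatically, while $M_j^{(j)}=M_j$, so the span condition reduces to $e\in\mathsf{span}_{M_j}(X)$. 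Combining these, \ref{item:char1} holds iff there is an $\mathcal{M}$-tight set $X\subseteq E-e$ with $e\in\mathsf{span}_{M_j}(X)$, which — as $X\subseteq E-e$ forces $e\notin X$ — is exactly \ref{item:char2}.

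The only genuine content is the idea that ``never being allowed to put $e$ into $M_j$'' is the same as ``being forced to put $e$ into $M_j$ once it is barred from every other matroid'', which lets Lemma~\ref{lem: one more cover} do the work; the remainder is routine bookkeeping about loops, restrictions, and the span operator, and the fact that declaring an edge a loop produces a matroid is already recorded in Section~\ref{s:matroids}. The one place that needs a little care is the ``$\Rightarrow$'' half of the first equivalence — checking that a covering of $\mathcal{M}^{(j)}$ really is forced to cover $e$ through $R_j$ — but I do not anticipate any serious obstacle beyond that.
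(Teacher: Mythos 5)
Your proof is correct and takes essentially the same approach as the paper: a direct verification of \ref{item:char2}$\Rightarrow$\ref{item:char1} using tightness and independence of $R_j$, and for \ref{item:char1}$\Rightarrow$\ref{item:char2} the same modified family $\mathcal{M}'$ (your $\mathcal{M}^{(j)}$) obtained by declaring $e$ a loop in every $M_i$ with $i\neq j$, followed by an application of Lemma~\ref{lem: one more cover}. The paper states this step very tersely, whereas you spell out the translation between coverings and tight sets of $\mathcal{M}^{(j)}$ and of $\mathcal{M}$, but the underlying argument is identical.
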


\begin{proof}
    Assume statement~\ref{item:char2} holds for some~${j \in K}$ and let ${(R_i \colon i\in K)}$ be a covering of~$\mathcal{M}$. 
    Since~$X$ is $\mathcal{M}$-tight, ${R_j \cap X}$ spans~$X$, and hence~$e$, in~$M_j$.
    However, since~$R_j$ is independent in~$M_j$ and ${e \notin X}$ we get that ${e \notin R_j}$, as desired.
    
    Now suppose that statement~\ref{item:char1} holds for some~${j \in K}$. 
    For~${i\neq j}$, let~$M_i'$ be the matroid that we obtain from~$M_i$ by declaring~$e$ to be a loop and let~${M_j' := M_j}$. 
    The resulting matroid family~$\mathcal{M}'$ does not admit a covering but~${\mathcal{M}\restrict(E-e)}$ does. 
    We apply Lemma~\ref{lem: one more cover} with~$\mathcal{M}'$ and~$e$ to obtain the desired~$X$.
\end{proof}

\begin{prop}
    \label{prop:tight-union}
    If~$\mathcal{M}$ is a family of matroids which admits a covering, 
    then the class of $\mathcal{M}$-tight sets is closed under arbitrary large unions and intersections.
\end{prop}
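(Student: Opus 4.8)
I would prove the two closure properties separately, the union case being routine and the intersection case requiring the cowave machinery together with a uniqueness-of-fundamental-circuit argument. A remark that streamlines both: for every ${X\subseteq E}$ the family ${\mathcal{M}\restrict X}$ automatically admits a covering, since restricting any covering ${(S_i)_{i\in K}}$ of~$\mathcal{M}$ to~$X$ yields the covering ${(S_i\cap X)_{i\in K}}$ of~${\mathcal{M}\restrict X}$. Hence ``$X$ is $\mathcal{M}$-tight'' is equivalent to ``in every covering ${(R_i)_{i\in K}}$ of~${\mathcal{M}\restrict X}$ each~$R_i$ spans~$X$ in~$M_i$''. For unions, put ${X:=\bigcup_\alpha X_\alpha}$; given such a covering ${(R_i)_{i\in K}}$ and ${i\in K}$, for each~$\alpha$ the family ${(R_j\cap X_\alpha)_{j\in K}}$ is a covering of~${\mathcal{M}\restrict X_\alpha}$, so tightness of~$X_\alpha$ gives ${X_\alpha\subseteq\mathsf{span}_{M_i}(R_i\cap X_\alpha)\subseteq\mathsf{span}_{M_i}(R_i)}$; taking the union over~$\alpha$ shows that~$R_i$ spans~$X$.

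For intersections, set ${X:=\bigcap_\alpha X_\alpha}$ and ${Y:=\bigcup_\alpha X_\alpha}$, so that~$Y$ is $\mathcal{M}$-tight by the union case and ${X\subseteq X_\alpha\subseteq Y}$ for all~$\alpha$. The heart of the argument is a purely matroid-theoretic lemma that I would isolate, using only the uniqueness of fundamental circuits (Fact~\ref{Fact-unique-circ}): \emph{if~$N$ is a matroid on ground set~$Y$, $B$ is a base of~$N$, and ${(X_\alpha)_\alpha}$ are subsets of~$Y$ with ${B\cap X_\alpha}$ spanning~$X_\alpha$ in~$N$ for every~$\alpha$, then ${B\cap\bigcap_\alpha X_\alpha}$ spans~${\bigcap_\alpha X_\alpha}$ in~$N$.} Its proof is short: for ${x\in\bigcap_\alpha X_\alpha}$ not lying in~$B$ (the case ${x\in B}$ being trivial), the unique circuit of~$N$ inside~${B+x}$ and the unique circuit of~$N$ inside~${(B\cap X_\alpha)+x}$ are both circuits contained in~${B+x}$, hence coincide; therefore ${C(x,B)-x\subseteq B\cap X_\alpha\subseteq X_\alpha}$ for every~$\alpha$, so ${C(x,B)-x\subseteq B\cap\bigcap_\alpha X_\alpha}$.

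To deploy this I first need that a covering of~${\mathcal{M}\restrict X}$ extends to a covering of~$\mathcal{M}$. Exactly as in the proof of Lemma~\ref{lem: one more cover}, the union of a covering of~${\mathcal{M}\restrict X}$ with a covering of~${\mathcal{M}{.}(E\setminus X)}$ is a covering of~$\mathcal{M}$, so it suffices that~${E\setminus X}$ is a cowave. For each~$\alpha$, tightness of~$X_\alpha$ makes each~${S_j\cap X_\alpha}$ (for a covering ${(S_j)_{j\in K}}$ of~$\mathcal{M}$) a base of~${M_j\restrict X_\alpha}$, whence each~${S_j\setminus X_\alpha}$ is independent in~${M_j\slash X_\alpha}$ (witnessed by the base ${S_j\cap X_\alpha}$ of~${M_j\restrict X_\alpha}$) and these sets cover~${E\setminus X_\alpha}$; thus every~${E\setminus X_\alpha}$ is a cowave, and since cowaves are closed under arbitrary unions (proof of Lemma~\ref{lem: one more cover}) and ${E\setminus X=\bigcup_\alpha(E\setminus X_\alpha)}$, so is~${E\setminus X}$. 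Now let ${(R_i)_{i\in K}}$ be an arbitrary covering of~${\mathcal{M}\restrict X}$ and extend it to a covering ${(S_i)_{i\in K}}$ of~$\mathcal{M}$ with ${S_i\cap X=R_i}$. Since~$Y$ is $\mathcal{M}$-tight, ${(S_i\cap Y)_{i\in K}}$ is a covering of~${\mathcal{M}\restrict Y}$ with each~${S_i\cap Y}$ a base of~${M_i\restrict Y}$; and for every~$\alpha$ the set ${(S_i\cap Y)\cap X_\alpha=S_i\cap X_\alpha}$ is a base of~${M_i\restrict X_\alpha}$, hence spans~$X_\alpha$ in~${M_i\restrict Y}$. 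Applying the lemma above with ${N:=M_i\restrict Y}$ and ${B:=S_i\cap Y}$ gives that ${R_i=(S_i\cap Y)\cap X}$ spans~$X$ in~${M_i\restrict Y}$, hence in~$M_i$; as~$R_i$ is independent, it is a base of~${M_i\restrict X}$. Since ${\mathcal{M}\restrict X}$ admits a covering and ${(R_i)_{i\in K}}$ was arbitrary, $X$ is $\mathcal{M}$-tight.

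The only genuinely non-formal step is the circuit-uniqueness lemma, and I expect the main obstacle to be spotting that it has to be applied inside the tight superset ${Y=\bigcup_\alpha X_\alpha}$ rather than inside~$\mathcal{M}$ itself: extending a covering of~${\mathcal{M}\restrict X}$ to an arbitrary base of some~$M_i$ could enlarge its trace on~$X$, whereas the trace on the tight set~$Y$ is forced to be a base of~${M_i\restrict Y}$ and still meets~$X$ in exactly~$R_i$. The cowave bookkeeping and the union case are straightforward given what has already been established.
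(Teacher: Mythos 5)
Your proof is correct, and for the intersection case it takes a genuinely different route from the paper. The union argument is the same. For intersections, the paper starts from a covering $(Q_i)$ of $\bigcap_\alpha X_\alpha$ and first assumes it extends to a covering $(R_i)$ of $\bigcup_\alpha X_\alpha$; then it invokes Proposition~\ref{prop: tight fulldef} (coverings of tight families are partitionings) to conclude that the $R_i$ are disjoint and hence ${R_i\cap\bigcap_\alpha X_\alpha=Q_i}$, and runs the fundamental-circuit argument inline directly on $R_j$ as a base of ${M_j\restrict\bigcup_\alpha X_\alpha}$. It then handles extendability by a bootstrap: a particular covering of $\bigcap_\alpha X_\alpha$ (restricted from a global covering of~$\mathcal{M}$) trivially extends, so it spans, so ${(R_i\setminus Q_i)}$ covers ${(\mathcal{M}\restrict\bigcup_\alpha X_\alpha)/\bigcap_\alpha X_\alpha}$, and any other covering of the intersection can be glued to it. You instead isolate the circuit-uniqueness step into a standalone lemma (a nice abstraction), and you prove extendability up front by showing ${E\setminus\bigcap_\alpha X_\alpha=\bigcup_\alpha(E\setminus X_\alpha)}$ is a cowave, reusing the cowave machinery from the proof of Lemma~\ref{lem: one more cover}; this also lets you sidestep Proposition~\ref{prop: tight fulldef}, since you can simply \emph{choose} the extension $(S_i)$ to satisfy ${S_i\cap X=R_i}$ rather than recovering the trace from disjointness. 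The paper's bootstrap is a bit more economical in machinery deployed; your version makes the logical structure more linear (extendability is established once, up front) and surfaces the key circuit argument as a reusable statement. Both buy the same theorem.
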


\begin{proof}
    Firstly, suppose that~$X_\alpha$ is an $\mathcal{M}$-tight set for ${\alpha<\kappa}$ and let ${(R_i \colon i\in K)}$ be an arbitrary covering of~${\bigcup_{\alpha<\kappa} X_\alpha}$. 
    Since ${(X_\alpha\cap R_i \colon i\in K)}$ is a covering of the $\mathcal{M}$-tight set~$X_\alpha$, we know that ${X_\alpha\cap R_i}$ 
    spans~$X_\alpha$ in~$M_i$ for~${i \in K}$. 
    But then~$R_i$ spans ${\bigcup_{\alpha<\kappa} X_\alpha}$ in~$M_i$ for~${i \in K}$ and hence $\bigcup_{\alpha<\kappa}X_\alpha$ is $\mathcal{M}$-tight.
    
    Secondly, suppose that ${(Q_i \colon i\in K)}$ is a covering of~${\bigcap_{\alpha<\kappa} X_\alpha}$. 
    Assume first that there is a covering ${( R_i \colon i \in K)}$ of~${\bigcup_{\alpha < \kappa} X_\alpha}$ 
    with~${Q_i \subseteq R_i}$. 
    Since~${\bigcup_{\alpha < \kappa} X_\alpha}$ is tight, $R_i$ spans ${\bigcup_{\alpha<\kappa} X_\alpha}$ for~${i \in K}$ 
    and the sets~$R_i$ are pairwise disjoint by Proposition~\ref{prop: tight fulldef}. 
    The latter implies that~${R_i \cap( \bigcap_{\alpha < \kappa} X_\alpha) = Q_i}$ for every~${i \in K}$. 
    Suppose that ${e \in (\bigcap_{\alpha < \kappa} X_\alpha) \setminus Q_j}$ for some~${j \in K}$ and let~${C := C_{M_j}(e,R_j)}$. 
    Since ${X_\alpha \ni e}$ is $\mathcal{M}$-tight for each~$\alpha$ and ${(R_i\cap X_\alpha \colon i\in K)}$ is a covering of it, we must have~${C-e\subseteq R_j\cap X_\alpha}$. 
    However, then ${C-e \subseteq R_j \cap(\bigcap_{\alpha < \kappa} X_\alpha) = Q_j}$. 
    It follows that~$Q_i$ spans ${\bigcap_{\alpha < \kappa} X_\alpha}$ in~$M_i$ for~${i \in K}$. 
    
    Finally we show that actually every covering of ${\bigcap_{\alpha < \kappa} X_\alpha}$ can be extended to a covering of~${\bigcup_{\alpha < \kappa} X_\alpha}$. 
    Indeed, since~$\mathcal{M}$ admits a covering there is at least one covering ${(Q_i \colon i\in K)}$ of ${\bigcap_{\alpha<\kappa} X_\alpha}$ which extends to a covering ${( R_i \colon i\in K)}$ of~${\bigcup_{\alpha<\kappa} X_\alpha}$.
    However then the sets ${R_i \setminus Q_i}$ form a covering of ${(\mathcal{M} \restrict \bigcup_{\alpha<\kappa}X_\alpha)/\bigcap_{\alpha<\kappa}X_\alpha}$ 
    because we have seen that~$Q_i$ spans ${\bigcap_{\alpha<\kappa} X_\alpha}$ in~$M_i$. 
    But then every covering ${(Q_i' \colon i\in K)}$ of ${\bigcap_{\alpha<\kappa} X_\alpha}$ can be extended to a covering of ${\bigcup_{\alpha<\kappa} X_\alpha}$ witnessed by the sets~${Q_i' \cup (R_i \setminus Q_i)}$ 
    which completes the proof. 
\end{proof}

\begin{obs}
    \label{obs:tight-cover extend}
    If~$\mathcal{M}$ is a family of matroids which admits a covering and~$X$ is $\mathcal{M}$-tight. 
    Then~${\mathcal{M}/X}$ admits a covering and hence every covering of~$X$ can be extended to a covering of~$\mathcal{M}$. 
\end{obs}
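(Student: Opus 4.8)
The plan is to produce everything needed from a single covering of the whole family. First I would fix a covering $(R_i : i\in K)$ of $\mathcal{M}$, which exists by hypothesis, and look at its trace on $X$: the family $(R_i\cap X : i\in K)$ is a covering of $\mathcal{M}\restrict X$, so by the $\mathcal{M}$-tightness of $X$ each $R_i\cap X$ is spanning in $M_i\restrict X$; being also independent in $M_i\restrict X$, it is in fact a base of $M_i\restrict X$.

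Next I would check that $(R_i\setminus X : i\in K)$ is a covering of $\mathcal{M}/X$. On the one hand, $R_i = (R_i\cap X)\cup(R_i\setminus X)$ is independent in $M_i$ and $R_i\cap X$ is a base of $M_i\restrict X$, so by the standard description of independence in a contraction minor (for a base $B$ of $M_i\restrict X$, a subset $J$ of $E\setminus X$ is independent in $M_i/X$ if and only if $B\cup J$ is independent in $M_i$; see \cite{BDKW13}) the set $R_i\setminus X$ is independent in $M_i/X$. On the other hand $\bigcup_{i\in K}(R_i\setminus X) = \bigl(\bigcup_{i\in K}R_i\bigr)\setminus X = E\setminus X$, the ground set of $\mathcal{M}/X$. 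Hence $\mathcal{M}/X$ admits a covering, which is the first assertion.

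Finally, for the ``hence'' clause, let $(Q_i : i\in K)$ be an arbitrary covering of $X$. Tightness of $X$ again forces each $Q_i$ to be a base of $M_i\restrict X$. Then $(Q_i\cup(R_i\setminus X) : i\in K)$ is the desired extension: for each $i$ the sets $Q_i\subseteq X$ and $R_i\setminus X\subseteq E\setminus X$ are disjoint, so $Q_i\subseteq Q_i\cup(R_i\setminus X)$; the union over $i$ equals $X\cup(E\setminus X)=E$; and $Q_i\cup(R_i\setminus X)$ is independent in $M_i$ because $R_i\setminus X$ is independent in $M_i/X$ while $Q_i$ is a base of $M_i\restrict X$, by the same characterisation as above.

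The only point that needs a little care is the interface between independence in $M_i$ and in the contraction $M_i/X$ --- in particular that the characterisation of $\mathcal{I}(M_i/X)$ via a base of $M_i\restrict X$ does not depend on the chosen base --- but this is a standard fact about infinite matroids, not special to finitary-cofinitary families, and everything else is routine bookkeeping. I do not expect a genuine obstacle here, consistently with the statement being recorded only as an observation.
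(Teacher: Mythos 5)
Your proof is correct, and it is the natural argument the paper has in mind but leaves implicit (the statement is recorded only as an observation without proof). The two ingredients you use — that tightness forces the trace of any covering on $X$ to consist of bases of $M_i\restrict X$, and the standard base-relative characterisation of independence in a contraction — are exactly what is needed, and the bookkeeping is sound. Nothing is missing.
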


\subsection{Feasible extensions}
\label{subsec: feasible extensions}
In the proof of the main result Theorem~\ref{thm:main-thm-intro} we will construct the desired partitioning ${(B_i \colon i \in K)}$ recursively 
by keeping track, for each~$i$, of a set~$I_i$ of edges which will belong to~$B_i$, 
as well as a set ${S_i \supseteq I_i}$ such that edges not in~$S_i$ will not belong to~$B_i$ (briefly ${I_i \subseteq B_i \subseteq S_i}$). 
As we proceed, the set~$I_i$ will grow and the set~$S_i$ will shrink, and the base~$B_i$ will be sandwiched in the middle. 
Clearly a basic requirement, if this strategy is to succeed, is that~$I_i$ must be independent and~$S_i$ must be spanning in~$M_i$, otherwise~$B_i$ cannot be a base. 
However, if ${(B_i \colon i \in K)}$ is to be a partitioning, then these pairs must satisfy stronger conditions. 

Let us consider a family of ordered pairs 
${\mathcal{F} = (\left\langle I_i, S_i\right\rangle \colon i \in K)}$ where 
\begin{itemize}
    \item ${I_i\subseteq S_i \subseteq E}$; 
    \item $I_i$ is independent and~$S_i$ is spanning in~$M_i$;  
    \item ${I_i \cap I_j = \emptyset}$ for ${i \neq j \in K}$; and 
    \item ${\bigcup_{i \in K} S_i = E}$. 
\end{itemize}

We call such a family~$\mathcal{F}$ \emph{covering-feasible} with respect to~$\mathcal{M}$ 
if there exists a covering ${(R_i \colon i \in K)}$ of~$\mathcal{M}$ where~${I_i \subseteq R_i \subseteq S_i}$ for~${i \in K}$. 
We call such a covering ${\mathcal{F}}$-\emph{compatible}. 
The definition of \emph{packing-feasible} is analogous. 
Finally, $\mathcal{F}$ is \emph{feasible} if it is covering-feasible and packing-feasible. 
We say that~${\mathcal{F}' = (\left\langle I_i', S_i'\right\rangle \colon i \in K)}$ is an \emph{extension} of~$\mathcal{F}$ 
if ${I_i \subseteq I'_i \subseteq S'_i \subseteq S_i}$ 
holds for every~${i \in K}$. 

\begin{obs}\label{obs:S_i I_j}
    Every feasible family ${\mathcal{F} = (\left\langle I_i, S_i\right\rangle \colon i\in K)}$ admits a feasible extension 
    $\mathcal{F}' = (\left\langle I_i, S_i'\right\rangle \colon i\in K)$ for which ${S_i' \cap I_j = \emptyset}$ 
    whenever~${i \neq j}$. 
    Choosing ${S_i' := S_i \setminus \bigcup_{j \neq i} I_j}$ is appropriate. 
\end{obs}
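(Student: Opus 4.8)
The plan is simply to verify that the explicit choice ${S_i' := S_i \setminus \bigcup_{j \neq i} I_j}$ proposed in the statement works; no clever construction is needed, only a short sequence of routine checks, and the single point that requires a moment's thought is why the shrunk sets~$S_i'$ are still spanning. Throughout I would use, over and over, the one structural input we have: the sets~$I_j$ are pairwise disjoint, so~$I_i$ is disjoint from~${\bigcup_{j\neq i} I_j}$. In particular this immediately gives ${I_i \subseteq S_i'}$, and together with the obvious inclusion~${S_i' \subseteq S_i}$ it shows that~$\mathcal{F}'$ (with the same sets~$I_i$) is an extension of~$\mathcal{F}$; it also makes ${S_i' \cap I_j = \emptyset}$ for~${i\neq j}$ a triviality, since then~$I_j$ is contained in the deleted set.

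Next I would check that~$\mathcal{F}'$ is again a family of the required form. Independence of the~$I_i$, their pairwise disjointness, and the inclusions~${I_i\subseteq S_i'\subseteq E}$ are clear. That~${\bigcup_{i\in K} S_i' = E}$ follows because any~${e \in E}$ lies in some~$S_i$, and if~$e$ was deleted from~$S_i$, i.e.\ ${e \in I_j}$ for some~${j\neq i}$, then~${e \in I_j \subseteq S_j'}$. The one genuinely substantive clause is that~$S_i'$ is spanning in~$M_i$, and here I would invoke \emph{packing}-feasibility of~$\mathcal{F}$: fix an $\mathcal{F}$-compatible packing~${(P_i \colon i\in K)}$, so the~$P_i$ are pairwise disjoint, each~$P_i$ is spanning in~$M_i$, and~${I_i\subseteq P_i\subseteq S_i}$. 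For~${j\neq i}$ we have~${I_j\subseteq P_j}$ and~${P_i\cap P_j=\emptyset}$, so~$P_i$ is disjoint from~${\bigcup_{j\neq i} I_j}$ and hence~${P_i\subseteq S_i'}$; since~$P_i$ is spanning, so is~$S_i'$.

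Finally I would establish feasibility of~$\mathcal{F}'$. Packing-feasibility comes for free: the same packing~${(P_i \colon i\in K)}$ now satisfies~${I_i\subseteq P_i\subseteq S_i'}$, so it is $\mathcal{F}'$-compatible. For covering-feasibility I would take an $\mathcal{F}$-compatible covering~${(R_i \colon i\in K)}$ and shrink it in the same way, setting~${R_i' := R_i\setminus \bigcup_{j\neq i} I_j}$. Each~$R_i'$ is independent as a subset of~$R_i$, and the disjointness argument used above gives~${I_i\subseteq R_i'\subseteq S_i'}$; moreover~${(R_i' \colon i\in K)}$ still covers~$E$, since for~${e\in E}$ we may pick~$i$ with~${e\in R_i}$, and either~$e$ was not deleted, so~${e\in R_i'}$, or~${e\in I_j}$ for some~${j\neq i}$, so~${e\in I_j\subseteq R_j'}$. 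Thus~${(R_i' \colon i\in K)}$ is an $\mathcal{F}'$-compatible covering and~$\mathcal{F}'$ is feasible, completing the proof. The only ``obstacle'' worth flagging is remembering that it is packing-feasibility, not covering-feasibility, that is needed to keep the~$S_i'$ spanning; the rest is bookkeeping with the pairwise disjointness of the~$I_j$.
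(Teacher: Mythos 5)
Your proof is correct and matches the intent of the paper, which simply asserts that the explicit choice $S_i' := S_i \setminus \bigcup_{j\neq i} I_j$ works and leaves the routine verification to the reader. You have supplied exactly that verification, with the one non-trivial observation (that a $\mathcal{F}$-compatible packing $(P_i)$ already satisfies $P_i \subseteq S_i'$, giving both the spanning property and packing-feasibility for free) correctly identified and carried out.
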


We define ${\mathcal{M}(\mathcal{F})}$ to be the matroid family ${(M_i(\mathcal{F}) \colon i\in K)}$ 
where we obtain~${M_i(\mathcal{F})}$ from~$M_i$ by contracting~$I_i$, deleting~${\bigcup_{j\neq i} I_j}$ and declaring 
loops those remaining edges which are not in~$S_i$. 
Note that each~${M_i(\mathcal{F})}$ is a matroid on~${E \setminus \bigcup_{j \in K} I_j}$. 

\begin{obs}\label{obs:M(F) and F-comp}
    For a covering~${(R_i \colon i\in K)}$ of $\mathcal{M}(\mathcal{F})$, 
    the family~${( R_i \cup I_i \colon i\in K)}$ is an $\mathcal{F}$-compatible covering of~$\mathcal{M}$, 
    and, vice versa, 
    given an $\mathcal{F}$-compatible covering ${(R_i \colon i \in K)}$ of~$\mathcal{M}$ the family~${( R_i  \setminus\bigcup_{j \in K} I_j \colon i\in K)}$ is a covering of~${\mathcal{M}(\mathcal{F})}$. 
\end{obs}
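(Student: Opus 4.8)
The plan is to funnel both directions through a single description of the independent sets of~$M_i(\mathcal{F})$. Concretely, I would first prove the following characterisation: for a set ${J \subseteq E \setminus \bigcup_{j \in K} I_j}$, the set~$J$ is independent in~$M_i(\mathcal{F})$ if and only if ${J \subseteq S_i}$ and ${J \cup I_i}$ is independent in~$M_i$. To obtain this I would peel off, one at a time, the three operations used to build~$M_i(\mathcal{F})$ from~$M_i$. Declaring the edges of ${(E \setminus \bigcup_j I_j) \setminus S_i}$ to be loops merely forbids an independent set from meeting them, which for a ${J \subseteq E \setminus \bigcup_j I_j}$ is precisely the condition ${J \subseteq S_i}$, and once $J$ avoids these loops its independence is unaffected by this operation; the deletion of~${\bigcup_{j \neq i} I_j}$ is irrelevant for a $J$ disjoint from it; and since~$I_i$ is independent in~$M_i$, a set~$J$ disjoint from~$I_i$ is independent in~${M_i / I_i}$ exactly when ${J \cup I_i}$ is independent in~$M_i$, by the standard behaviour of contraction along an independent set.

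Granting the characterisation, the first claim is immediate. Given a covering ${(R_i \colon i \in K)}$ of~$\mathcal{M}(\mathcal{F})$, each~$R_i$ is independent in~$M_i(\mathcal{F})$, so ${R_i \subseteq S_i}$ and ${R_i \cup I_i}$ is independent in~$M_i$; hence ${I_i \subseteq R_i \cup I_i \subseteq S_i}$, the set ${R_i \cup I_i}$ is independent in~$M_i$, and ${\bigcup_{i \in K}(R_i \cup I_i) = (E \setminus \bigcup_j I_j) \cup \bigcup_j I_j = E}$. Thus ${(R_i \cup I_i \colon i \in K)}$ is an $\mathcal{F}$-compatible covering of~$\mathcal{M}$.

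For the converse I would take an $\mathcal{F}$-compatible covering ${(R_i \colon i \in K)}$ of~$\mathcal{M}$ and put ${R_i' := R_i \setminus \bigcup_{j \in K} I_j}$. Using ${I_i \subseteq R_i \subseteq S_i}$ one checks that ${R_i' \cup I_i = R_i \setminus \bigcup_{j \neq i} I_j \subseteq R_i}$, so ${R_i' \cup I_i}$ is independent in~$M_i$ by downward closure; together with ${R_i' \subseteq R_i \subseteq S_i}$ and ${R_i' \subseteq E \setminus \bigcup_j I_j}$, the characterisation yields that~$R_i'$ is independent in~$M_i(\mathcal{F})$. Finally ${\bigcup_{i} R_i' = (\bigcup_i R_i) \setminus \bigcup_j I_j = E \setminus \bigcup_j I_j}$, so ${(R_i' \colon i \in K)}$ is a covering of~$\mathcal{M}(\mathcal{F})$.

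I expect essentially all of the (mild) difficulty to lie in the characterisation step, and in particular in confirming that the contraction identity --- that ${J \cup I_i}$ is independent in~$M_i$ if and only if~$J$ is independent in~${M_i / I_i}$, for~$I_i$ independent and~$J$ disjoint from it --- remains valid for infinite matroids; this is standard but worth recording. Everything afterwards is bookkeeping with unions and set differences, and it is harmless to note in passing that the maps ${(R_i) \mapsto (R_i \cup I_i)}$ and ${(R_i) \mapsto (R_i \setminus \bigcup_j I_j)}$ are mutually inverse on the relevant families, although this is not needed for the statement as phrased.
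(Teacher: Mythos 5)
Your proof is correct, and it fills in exactly the routine verification the paper leaves implicit: the paper states this as an Observation without giving a proof, treating it as immediate from the definition of~$\mathcal{M}(\mathcal{F})$. Your characterisation of independence in~$M_i(\mathcal{F})$ by peeling off the three operations (declaring loops, deletion, contraction along the independent set~$I_i$) is precisely the expected unwinding, and the subsequent bookkeeping with unions and set differences is accurate, including the small identity ${R_i' \cup I_i = R_i \setminus \bigcup_{j \neq i} I_j}$ that makes the converse direction go through via downward closure.
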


Using our concept of tightness we can say when it is possible to extend a feasible~$\mathcal{F}$ by adding a 
particular edge~$e$ to~$I_j$. 

\begin{cor}
    \label{cor:cov-feasible}
    Let~${\mathcal{F} = (\left\langle I_i,S_i \right\rangle \colon i\in K)}$ be feasible
    and suppose that for~${e \in E \setminus \bigcup_{i \in K} I_i}$ the extension~$\mathcal{F}'$ that we obtain by adding~$e$ to~$I_j$ is not covering-feasible. 
    Then there is an~${\mathcal{M}(\mathcal{F})}$-tight set $ X\not \ni e $ that spans~$e$ in~$M_j(\mathcal{F})$. 
\end{cor}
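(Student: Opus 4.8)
The plan is to push the whole question down into the contracted family $\mathcal{M}(\mathcal{F}) = (M_i(\mathcal{F}) \colon i \in K)$, which lives on the ground set $E \setminus \bigcup_{i \in K} I_i$, and then to invoke Corollary~\ref{cor:tight-char} for $\mathcal{M}(\mathcal{F})$ with the edge $e$ and the index $j$. Its conclusion~\ref{item:char2}, read for $\mathcal{M}(\mathcal{F})$, says exactly that there is an $\mathcal{M}(\mathcal{F})$-tight set $X$ with $e \in \mathsf{span}_{M_j(\mathcal{F})}(X) \setminus X$, i.e.\ an $\mathcal{M}(\mathcal{F})$-tight $X$ with $e \notin X$ spanning $e$ in $M_j(\mathcal{F})$ --- which is the assertion of the corollary. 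Two of the hypotheses of Corollary~\ref{cor:tight-char} come for free: $e$ lies in the ground set of $\mathcal{M}(\mathcal{F})$ since $e \notin \bigcup_{i \in K} I_i$; and $\mathcal{M}(\mathcal{F})$ admits a covering because $\mathcal{F}$, being feasible, is in particular covering-feasible, so an $\mathcal{F}$-compatible covering of $\mathcal{M}$ exists and Observation~\ref{obs:M(F) and F-comp} turns it into a covering of $\mathcal{M}(\mathcal{F})$. So everything comes down to verifying hypothesis~\ref{item:char1}, namely that $e \notin R_j$ for every covering $(R_i \colon i \in K)$ of $\mathcal{M}(\mathcal{F})$.

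To check this, suppose for contradiction that some covering $(R_i \colon i \in K)$ of $\mathcal{M}(\mathcal{F})$ has $e \in R_j$. By Observation~\ref{obs:M(F) and F-comp} the family $(R_i \cup I_i \colon i \in K)$ is then an $\mathcal{F}$-compatible covering of $\mathcal{M}$; in particular each $R_i \cup I_i$ is independent in $M_i$ with $I_i \subseteq R_i \cup I_i \subseteq S_i$. Since $e \in R_j$ we get $I_j + e \subseteq R_j \cup I_j \subseteq S_j$ with $R_j \cup I_j$ independent in $M_j$, so $\langle I_j + e, S_j \rangle$ really is an admissible pair and $(R_i \cup I_i \colon i \in K)$ is an $\mathcal{F}'$-compatible covering of $\mathcal{M}$. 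This contradicts the assumption that $\mathcal{F}'$ is not covering-feasible, and completes the argument. (If adding $e$ to $I_j$ fails to yield an admissible pair at all --- i.e.\ if $e \notin S_j$ or $I_j + e$ is dependent --- then $e$ is a loop of $M_j(\mathcal{F})$, so the always-tight empty set already spans $e$ in $M_j(\mathcal{F})$; no case analysis beyond this remark is needed.)

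I expect the proof to be short, and the only real point to get right is the bookkeeping around Observation~\ref{obs:M(F) and F-comp}: one must lift a hypothetical covering of $\mathcal{M}(\mathcal{F})$ \emph{up} to $\mathcal{M}$ (not the other way), and then notice that this lift automatically respects the enlarged pair $\langle I_j + e, S_j\rangle$ at coordinate $j$, which is precisely what lets it contradict ``$\mathcal{F}'$ is not covering-feasible''. Once hypothesis~\ref{item:char1} is in hand, Corollary~\ref{cor:tight-char} immediately produces the required $\mathcal{M}(\mathcal{F})$-tight set, so there is nothing further to do.
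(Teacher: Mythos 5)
Your proof is correct and follows the same route as the paper's: use Observation~\ref{obs:M(F) and F-comp} to transfer the assumption "adding $e$ to $I_j$ is not covering-feasible'' into the statement "no covering of $\mathcal{M}(\mathcal{F})$ has $e \in R_j$,'' then apply Corollary~\ref{cor:tight-char} to $\mathcal{M}(\mathcal{F})$. You have simply written out the lifting argument (and the loop edge case) that the paper leaves implicit.
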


\begin{proof}
    It follows from Observation~\ref{obs:M(F) and F-comp}, that there is no covering~${(R_i \colon i\in K)}$ 
    of~${\mathcal{M}(\mathcal{F})}$ with~${e \in R_j}$. 
    But then Corollary~\ref{cor:tight-char} implies that there is an $\mathcal{M}(\mathcal{F})$-tight set~$X$ spanning~$e$ 
    in~$M_j(\mathcal{F})$ as desired.  
\end{proof}

The next lemma allows us to ``eliminate'' all the non-empty tight sets by taking a suitable feasible extension.

\begin{lemma}
    \label{lem:elimin}
    Let ${\mathcal{F} = (\left\langle I_i,S_i\right\rangle \colon i\in K)}$ be feasible with respect to~$\mathcal{M}$, 
    let~$X$ be~${\mathcal{M}(\mathcal{F})}$-tight 
    and let ${(R_i \colon i\in K)}$ be a covering of~$X$ in~${\mathcal{M}(\mathcal{F})}$. 
    Then ${\mathcal{F}' = (\left\langle I_i',S_i\right\rangle \colon i\in K)}$ where ${I_i' = I_i \cup R_i}$ is a feasible extension of~$\mathcal{F}$ with respect to~$\mathcal{M}$ 
    and~$I_i'$ spans~${X \cap S_i}$ in~$M_i$ for every~${i \in K}$.
    
    Furthermore, if~$X$ is the $\subseteq$-largest ${\mathcal{M}(\mathcal{F})}$-tight set, then there is no non-empty~${\mathcal{M}(\mathcal{F}')}$-tight set.
\end{lemma}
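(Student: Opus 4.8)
The plan is to verify in turn the four conditions that make $\mathcal F'$ a feasible family of pairs, to read off the spanning statement as a by-product, and then to treat the ``furthermore'' clause by a lifting argument for tight sets. First I would record the structure of $(R_i\colon i\in K)$: since $X$ is $\mathcal M(\mathcal F)$-tight, Proposition~\ref{prop: tight fulldef} shows that $(R_i\colon i\in K)$ is actually a partition of $X$ and that each $R_i$ is a base of $M_i(\mathcal F)\restrict X$; in particular $R_i$ meets no loop of $M_i(\mathcal F)$, so $R_i\subseteq X\cap S_i$ and $R_i$ is independent in $M_i\slash I_i$, whence $I_i':=I_i\cup R_i$ is independent in $M_i$. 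From this the four requirements on $\mathcal F'=(\langle I_i',S_i\rangle\colon i\in K)$ follow directly: $I_i'\subseteq S_i$; $I_i'$ is independent in $M_i$ and $S_i$ is spanning (unchanged); the $I_i'$ are pairwise disjoint (the $R_i$ partition $X$, the $I_i$ are disjoint, and $X$ avoids $\bigcup_j I_j$); and $\bigcup_i S_i=E$ is inherited. As $I_i\subseteq I_i'\subseteq S_i$, this makes $\mathcal F'$ an extension of $\mathcal F$. For the spanning statement I would use that $R_i$ spans $X$ in $M_i(\mathcal F)\restrict X$: any $e\in X\cap S_i$ that is not a loop of $M_i(\mathcal F)$ lies on a circuit of $M_i(\mathcal F)$ contained in $R_i+e\subseteq X$, which is then a circuit of $M_i\slash I_i$, so $e\in\mathsf{span}_{M_i\slash I_i}(R_i)=\mathsf{span}_{M_i}(I_i\cup R_i)\setminus I_i$; together with the loops this gives $X\cap S_i\subseteq\mathsf{span}_{M_i}(I_i')$ for every $i$.

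Next I would establish feasibility of $\mathcal F'$. For covering-feasibility: $\mathcal M(\mathcal F)$ admits a covering by Observation~\ref{obs:M(F) and F-comp} (as $\mathcal F$ is covering-feasible), so Observation~\ref{obs:tight-cover extend} applies to the tight set $X$ and lets me extend $(R_i)$ to a covering $(\widehat R_i\colon i\in K)$ of $\mathcal M(\mathcal F)$; then $(I_i\cup\widehat R_i\colon i\in K)$ is an $\mathcal F$-compatible covering of $\mathcal M$ by Observation~\ref{obs:M(F) and F-comp} again, and it is even $\mathcal F'$-compatible since $I_i'\subseteq I_i\cup\widehat R_i\subseteq S_i$. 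For packing-feasibility: from an $\mathcal F$-compatible packing $(P_i\colon i\in K)$ of $\mathcal M$ I would set $P_i':=R_i\cup(P_i\setminus X)$. These are pairwise disjoint (the $R_i$ partition $X$, and the sets $P_i\setminus X$ avoid $X$ and are pairwise disjoint); they satisfy $I_i'\subseteq P_i'\subseteq S_i$ (using $I_i\subseteq P_i\setminus X$ and $R_i\subseteq S_i$); and each is spanning in $M_i$ since, by the spanning statement already proved, $\mathsf{span}_{M_i}(P_i')\supseteq(X\cap S_i)\cup(P_i\setminus X)\supseteq(P_i\cap X)\cup(P_i\setminus X)=P_i$, so $\mathsf{span}_{M_i}(P_i')=E$. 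Hence $(P_i')$ is an $\mathcal F'$-compatible packing, and $\mathcal F'$ is feasible.

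For the ``furthermore'' clause, suppose $X$ is the $\subseteq$-largest $\mathcal M(\mathcal F)$-tight set (which exists by Proposition~\ref{prop:tight-union}) and let $Y$ be $\mathcal M(\mathcal F')$-tight. Since the ground set of $\mathcal M(\mathcal F')$ is $(E\setminus\bigcup_i I_i)\setminus X$, the set $Y$ is disjoint from $X$, so it suffices to prove that $X\cup Y$ is $\mathcal M(\mathcal F)$-tight, for then maximality of $X$ forces $Y=\emptyset$. Here I would first record the identity $M_i(\mathcal F')=M_i(\mathcal F)\slash R_i\setminus\bigcup_{j\neq i}R_j$ (contraction, deletion and the declaring of loops commute in the relevant way, and the loops declared in forming $M_i(\mathcal F')$ are already loops of $M_i(\mathcal F)$), and then argue: (a) $\mathcal M(\mathcal F)\restrict(X\cup Y)$ admits a covering, obtained by combining $(R_i)$ with a covering $(T_i)$ of $\mathcal M(\mathcal F')\restrict Y$, since $R_i\cup T_i$ is independent in $M_i(\mathcal F)$ ($R_i$ being a base of $M_i(\mathcal F)\restrict X$ and $T_i$ independent in $M_i(\mathcal F)\slash R_i$); and (b) every disjoint covering $(Q_i)$ of $\mathcal M(\mathcal F)\restrict(X\cup Y)$ has each $Q_i$ spanning $X\cup Y$ in $M_i(\mathcal F)$, because then $(Q_i\cap X)$ is a covering of the tight set $X$, so $Q_i\cap X$ is a base of $M_i(\mathcal F)\restrict X$; this makes $(Q_i\cap Y)$ a covering of $\mathcal M(\mathcal F')\restrict Y$, hence $Q_i\cap Y$ is a base of $M_i(\mathcal F')\restrict Y$ by tightness of $Y$; and, $Q_i\cap X$ and $R_i$ both being bases of $M_i(\mathcal F)\restrict X$, it follows that $Q_i=(Q_i\cap X)\cup(Q_i\cap Y)$ is a maximal independent subset of $X\cup Y$ in $M_i(\mathcal F)$, as needed.

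I expect the crux to be step~(b), specifically the point that two (possibly different) bases $Q_i\cap X$ and $R_i$ of $M_i(\mathcal F)\restrict X$ may be used interchangeably once one contracts and restricts attention to $X$-disjoint sets: both $(M_i(\mathcal F)\restrict(X\cup Y))\slash(Q_i\cap X)$ and $(M_i(\mathcal F)\restrict(X\cup Y))\slash R_i$ coincide after deleting the loops they create inside $X$, each being $(M_i(\mathcal F)\restrict(X\cup Y))\slash X$. This is routine for finite matroids, but in the infinite setting it relies on properties of the one-step spanning operator (that it is a closure operator, hence idempotent, and that contracting a loop equals deleting it), and it is the step where the proof is most delicate.
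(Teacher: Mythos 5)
Your proof is correct and follows essentially the same route as the paper's: both first extract the spanning statement from tightness of $X$, then establish covering-feasibility via Observation~\ref{obs:tight-cover extend} (extending $(R_i)$ to a covering of $\mathcal M(\mathcal F)$), packing-feasibility by replacing the $X$-part of an $\mathcal F$-compatible packing with $I_i'$, and the ``furthermore'' clause by lifting a covering of $X\cup Y$ to show $X\cup Y$ is $\mathcal M(\mathcal F)$-tight. The only difference is cosmetic: you spell out the identity $M_i(\mathcal F')=(M_i(\mathcal F)/R_i)\setminus\bigcup_{j\neq i}R_j$ and the interchangeability of the bases $Q_i\cap X$ and $R_i$ of $M_i(\mathcal F)\restrict X$ (both giving $M_i(\mathcal F)\restrict(X\cup Y)/X$ up to loops), whereas the paper leaves this step implicit in the claim that $(Q_i\setminus X)$ is a covering of $Y$ in $\mathcal M(\mathcal F')$; that extra explicitness is fine, and the fact you invoke holds in the infinite setting as you say.
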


\begin{proof}
    We first note that, since~$X$ is $\mathcal{M}(\mathcal{F})$-tight, it follows  that $R_i$ spans~$X$ in~${M_i(\mathcal{F})}$ and by Proposition~\ref{prop: tight fulldef} the $R_i$ are pairwise disjoint. 
    Hence by the definition of~${M_i(\mathcal{F}})$ we obtain that~$I_i'$ spans ${X \cap S_i}$ in~$M_i$. 
    
    To show the covering-feasibility of~$\mathcal{F}'$, it is enough to find a covering for~${\mathcal{M}(\mathcal{F}')}$ (see Observation~\ref{obs:M(F) and F-comp}). 
    Such a covering exists if and only if there is a covering 
    of~$\mathcal{M}(\mathcal{F})$ which extends~${(R_i \colon i\in K)}$. 
    Since~$X$ is $\mathcal{M}(\mathcal{F})$-tight, Observation~\ref{obs:tight-cover extend} guarantees a covering with this property. 
    
    We turn to the proof of the packing-feasibility of~$\mathcal{F}'$. 
    Since~$\mathcal{F}$ is packing-feasible, there is an $\mathcal{F}$-compatible packing~${(P_i \colon i\in K)}$. 
    Let ${P_i' := (P_i \setminus X) \cup I_i'}$ for~${i\in K}$. 
    Then~$P_i'$ is spanning in~$M_i$, since~$P_i$ is spanning in~$M_i$ and~$I_i'$ spans~${S_i \cap X \supseteq P_i \cap X}$ in~$M_i$. 
    Hence~$\mathcal{F}'$ is also packing-feasible, and so feasible.

    Finally, suppose that~$X$ is the $\subseteq$-largest ${\mathcal{M}(\mathcal{F})}$-tight set and~$Y$ is~${\mathcal{M}(\mathcal{F}')}$-tight. 
    We claim that ${X \cup Y}$ is~${\mathcal{M}(\mathcal{F})}$-tight. 
    Indeed, let ${(Q_i \colon i\in K)}$ be a covering of ${X \cup Y}$ in~$\mathcal{M}(\mathcal{F})$, 
    which exists by Observation~\ref{obs:M(F) and F-comp} and the feasibility of~$\mathcal{F}$. 
    As before, since~$X$ is $\mathcal{M}(\mathcal{F})$-tight, the set~${Q_i \cap X}$ spans~$X$ in~${M_i(\mathcal{F})}$. 
    Therefore ${(Q_i \setminus X \colon i \in K)}$ is a covering of~$Y$ in~${\mathcal{M}(\mathcal{F}')}$. 
    Then, since~$Y$ is $\mathcal{M}(\mathcal{F}')$-tight, the set ${Q_i \setminus X}$ spans~$Y$ in~${M_i(\mathcal{F}')}$. 
    Hence, $Q_i$ spans~${X \cup Y}$ in~${M_i(\mathcal{F})}$ and so ${X \cup Y}$ is $\mathcal{M}(\mathcal{F})$-tight. 
    Thus by the maximality of~$X$ we obtain~${Y = \emptyset}$. 
\end{proof}

\subsection{The main lemmas}
\label{subsec: the main lemmas}
Recall that our strategy will be to construct a partitioning as the `limit', in some sense, of a sequence of feasible families ${\mathcal{F} = ( \left\langle I_i, S_i \right\rangle \colon i \in K )}$. 
In order to guarantee that the family ${(B_i \colon i \in K)}$ that we construct is actually a partitioning, 
we will need to make sure that~$B_i$ is independent and spanning in each~$M_i$ and that the~$B_i$ cover~$E$ and are disjoint. 
The disjointness of the~$B_i$ will follow from the fact that the~$I_i$ are disjoint. 

In the case where~$M_i$ is finitary the independence of~$B_i$ in~$M_i$ will follow from Fact~\ref{Fact-finitary}. 
For the other two properties, we can think of them as a collection of `tasks': 
for each~${e \in E}$ we must make sure that~$e$ is contained in some~$I_i$ and that~$e$ is spanned in~$M_i$ by each~$I_i$. 
Hence the core of the proof will consist of the lemmas in this section, which will allow us to extend a feasible family to include an edge~$e$ in~${\bigcup I_j}$, 
or to span an edge~$e$ by a particular~$I_j$ if~$M_j$ is finitary. 

Conversely, in the case where~$M_i$ is cofinitary, the fact that~$B_i$ is spanning in~$M_i$ will follow from Fact~\ref{Fact-cofinitary}. 
However we will need to make sure `by hand' that~$B_i$ is independent in this case. 
However, since~$B_i$ is independent in~$M_i$ if and only if~${E \setminus B_i}$ is spanning in~$M_i^*$, we can also view this as a collection of `tasks': 
for each ${e \in E}$ we must make sure that~$e$ is spanned in~$M_i^*$ by~${E \setminus S_i}$. 
Hence we will need a further lemma to ensure that we can extend a feasible family to span an edge~$e$ by a particular~${E \setminus S_j}$ if~$M_j$ is cofinitary. 

These three lemmas will allow us to ensure that ${(B_i \colon i \in K)}$ is indeed a partitioning.

\begin{lemma}
    \label{lem:cover-e}
    Let ${\mathcal{F} = (\left\langle I_i, S_i\right\rangle \colon i \in K)}$ be feasible with respect to~$\mathcal{M}$ and 
    let~${e \in E}$. 
    Then there exists a feasible extension ${\mathcal{F}' = (\left\langle I_i', S_i\right\rangle \colon i\in K)}$ 
    of~$\mathcal{F}$ 
    such that ${e \in \bigcup_{i\in K} I_i'}$.
\end{lemma}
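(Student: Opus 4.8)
The plan is to reduce the statement to a question about coverings of the contracted family $\mathcal{M}(\mathcal{F})$ and then handle the two obstructions that can arise. First I would observe, using Observation~\ref{obs:M(F) and F-comp}, that covering the edge $e$ is trivial if $e \in \bigcup_{i\in K} I_i$ already, so assume $e \notin \bigcup_{i\in K} I_i$. Then $e$ is an edge of each matroid $M_i(\mathcal{F})$. For each $j \in K$ I would ask whether the extension $\mathcal{F}^{(j)}$ obtained by adding $e$ to $I_j$ is covering-feasible. If for some $j$ it is \emph{also} packing-feasible, we are done, since then $\mathcal{F}' := \mathcal{F}^{(j)}$ works. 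So the work is in showing that this must happen for at least one $j$.

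The idea is to first apply Lemma~\ref{lem:elimin} to replace $\mathcal{F}$ by a feasible extension with no non-empty $\mathcal{M}(\mathcal{F})$-tight set: take $X$ to be the $\subseteq$-largest $\mathcal{M}(\mathcal{F})$-tight set (which exists by Proposition~\ref{prop:tight-union}), pick any covering of $X$ in $\mathcal{M}(\mathcal{F})$, and pass to the resulting $\mathcal{F}'$. After this reduction, by Corollary~\ref{cor:cov-feasible} the extension by adding $e$ to $I_j$ is covering-feasible for \emph{every} $j \in K$, because the only obstruction would be an $\mathcal{M}(\mathcal{F}')$-tight set spanning $e$, and there are no non-empty such sets. (One should double-check the reduction is harmless: an extension of the extension is an extension of the original, by transitivity of the extension relation, and $I_i$ only grows while $S_i$ is kept unchanged in Lemma~\ref{lem:elimin}, so putting $e$ into $\bigcup I_i'$ for the new family also does it for the original.)

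It remains to secure packing-feasibility for at least one choice of $j$. Here I would use the $\mathcal{F}'$-compatible packing $(P_i \colon i\in K)$ guaranteed by packing-feasibility of $\mathcal{F}'$. Since $e \in E = \bigcup_{i\in K} S_i$ and, after Observation~\ref{obs:S_i I_j}, we may assume the $S_i$ are arranged so that the relevant disjointness holds, $e$ lies in $S_j$ for some $j$; moreover $e \notin \bigcup_i I_i$. If $e \in P_j$, then adding $e$ to $I_j$ keeps $(P_i)$ a compatible packing, since $I_j + e \subseteq P_j \subseteq S_j$ and $e$ is still missing from the other $P_i$ — so $\mathcal{F}^{(j)}$ is packing-feasible and we are done. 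If $e \notin P_j$, then since $P_j$ is spanning in $M_j$ and $e \notin I_j$, there is a circuit $C = C_{M_j}(e, P_j)$; pick $f \in C - e$ with $f \notin I_j$ (possible since $C - e \not\subseteq I_j$, as $I_j + e$ is independent in $M_j$ because $\mathcal{F}^{(j)}$ is covering-feasible). Then by Fact~\ref{Fact-unique-circ} the set $P_j' := P_j - f + e$ is again spanning in $M_j$, still contains $I_j + e$, and is contained in $S_j$ (as $e \in S_j$); keeping $P_i' := P_i$ for $i \neq j$ gives an $\mathcal{F}^{(j)}$-compatible packing. Hence $\mathcal{F}^{(j)}$ is feasible, and we set $\mathcal{F}' := \mathcal{F}^{(j)}$. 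The main obstacle is the bookkeeping across the two reductions — making sure that after applying Lemma~\ref{lem:elimin} the sets $S_i$ are untouched so the packing argument above still finds $e$ inside some $S_j$, and that compatibility constraints ($I_i \subseteq R_i \subseteq S_i$ and disjointness) are preserved when we move the single edge $e$ into one $I_j$.
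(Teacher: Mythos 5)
Your overall strategy matches the paper's: eliminate non-empty tight sets via Lemma~\ref{lem:elimin}, then add $e$ to some $I_j$ while exhibiting both a covering and a packing that certify feasibility. However, there is a genuine gap in how you choose the index $j$, and it propagates into both halves of your argument.

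You pick $j$ merely so that $e \in S_j$. This is not enough to conclude that the extension $\mathcal{F}^{(j)}$ is covering-feasible, even after killing all non-empty tight sets. Corollary~\ref{cor:cov-feasible} only tells you that failure of covering-feasibility produces a tight set spanning $e$ in $M_j(\mathcal{F}')$, and the empty set \emph{is} always tight; it spans $e$ precisely when $e$ is an $M_j(\mathcal{F}')$-loop, which happens exactly when $e \in \mathsf{span}_{M_j}(I_j)$ or $e \notin S_j$. Your choice rules out the second possibility but not the first. So "covering-feasible for every $j$" is false in general, and the bad index could be exactly the one you chose. The same defect resurfaces in your packing argument: if $e \notin P_j$ and $e$ is spanned by $I_j$, then the fundamental circuit $C_{M_j}(e,P_j)$ satisfies $C - e \subseteq I_j$ (by uniqueness of circuits in $P_j + e$), so your desired $f \in C - e$ with $f \notin I_j$ does not exist. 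Your justification that $C - e \not\subseteq I_j$ cites the covering-feasibility of $\mathcal{F}^{(j)}$, which is precisely the unestablished claim — this is circular.

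The fix is what the paper does: choose $j$ so that $e$ is \emph{not} an $M_j(\mathcal{F}')$-loop, i.e., $e \in S_j$ and $e \notin \mathsf{span}_{M_j}(I_j)$. Such a $j$ exists by covering-feasibility of $\mathcal{F}'$ together with Observation~\ref{obs:M(F) and F-comp} (in any covering, $e$ belongs to some $R_j$ with $I_j + e \subseteq R_j$ independent). With that choice the empty tight set no longer spans $e$, so Corollary~\ref{cor:cov-feasible} gives covering-feasibility, and $C - e \not\subseteq I_j$ holds by definition of non-loop, so the $f$-exchange goes through. One further point worth noting: when you replace $P_j$ by $P_j - f + e$, you must ensure $e \notin P_i$ for $i \neq j$ to keep the $P_i$ disjoint; the clean way, as in the paper, is to first arrange $e \in \bigcup_i P_i$ (modifying $P_j$ only in the case $e \notin \bigcup_i P_i$, where disjointness is automatic) and then add $e$ to the $I_j$ for the $j$ with $e \in P_j$.
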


\begin{proof}
    By Lemma~\ref{lem:elimin}, we may assume that there is no non-empty~${\mathcal{M}(\mathcal{F})}$-tight set. 
    Let~${\mathcal{P} = (P_i \colon i\in K)}$ be an $\mathcal{F}$-compatible base packing. 
    We may assume that ${e \in \bigcup_{i\in K} P_i}$, as otherwise we take some ${j \in K}$ such that~$e$ is not an $M_j(\mathcal{F})$-loop (which exists by covering-feasibility and Observation~\ref{obs:M(F) and F-comp}) and 
    replace~$P_j$ by~${P_j + e - f}$ for some ${f \in 
    C_{M_j}(e,P_j) - e}$.
    
    Suppose that~${e \in P_j}$ and add~$e$ to~$I_j$   to obtain $ \mathcal{F}'$. 
    Then $\mathcal{P}$ is an $\mathcal{F}'$-compatible packing, and so $\mathcal{F}'$ is packing-feasible, 
    and by Corollary~\ref{cor:cov-feasible}, since there is no non-empty~${\mathcal{M}(\mathcal{F})}$-tight set and~$e$ is not an ${M_j(\mathcal{F})}$-loop, $\mathcal{F}'$ is also covering-feasible.
\end{proof}

\begin{lemma}
    \label{lem:span-e}
    Let ${\mathcal{F} = (\left\langle I_i, S_i\right\rangle \colon i\in K)}$ be feasible with respect to~$\mathcal{M}$, 
    let~${e \in E}$ and assume that~$M_j$ is finitary for some~${j \in K}$. 
    Then there exists a feasible extension ${\mathcal{F}' = (\left\langle I_i', S_i\right\rangle \colon i\in K)}$ of $\mathcal{F}$ such 
    that~$e$ is spanned by~$I_j'$ in~$M_j$. 
\end{lemma}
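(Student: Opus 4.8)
\emph{Strategy and progress measure.}
I would obtain the required $\mathcal{F}'$ after finitely many steps, monitored by the following quantity. If $(P_i\colon i\in K)$ is any $\mathcal{F}$-compatible base packing, then $P_j$ is a base of $M_j$ lying inside $S_j$, so $P_j$ spans $e$; as $M_j$ is finitary there is a $\subseteq$-minimal \emph{finite} set $F(P_j)\subseteq P_j$ with $e\in\mathsf{span}_{M_j}(F(P_j))$, namely $\{e\}$ if $e\in P_j$ and the fundamental circuit $C_{M_j}(e,P_j)-e$ otherwise (Fact~\ref{Fact-unique-circ}); moreover such an $F(P_j)$ is unique. Set $m(\mathcal{F}):=\min|F(P_j)\setminus I_j|$ over all $\mathcal{F}$-compatible base packings $(P_i)$ (there is at least one, as $\mathcal{F}$ is packing-feasible and packings can be thinned to base packings). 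One checks easily that $m(\mathcal{F})$ is a finite non-negative integer and that $m(\mathcal{F})=0$ if and only if $e\in\mathsf{span}_{M_j}(I_j)$. It therefore suffices to prove: whenever $m(\mathcal{F})>0$ there is a feasible extension $\mathcal{F}'$ of $\mathcal{F}$ with the same $S_i$ and $m(\mathcal{F}')<m(\mathcal{F})$. Iterating this finitely often and outputting the last family then proves the lemma.

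\emph{The easy half of a step.}
Fix an $\mathcal{F}$-compatible base packing $(P_i)$ realising $m(\mathcal{F})$, put $F:=F(P_j)$, and pick any $d\in F\setminus I_j$. Then $d\in P_j\setminus I_j$, so $d\in S_j$; $d\notin\bigcup_i I_i$, since $P_j$ is disjoint from each $I_i\subseteq P_i$ with $i\neq j$; and $d\notin\mathsf{span}_{M_j}(I_j)$, since $P_j$ is independent and contains $I_j$; in other words $d$ is a non-loop of $M_j(\mathcal{F})$. Let $\mathcal{F}_d$ arise from $\mathcal{F}$ by moving $d$ into $I_j$; this is again a legitimate family of pairs, and it is packing-feasible because $(P_i)$ is still $\mathcal{F}_d$-compatible (we enlarged $I_j$ only inside $P_j$). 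If $\mathcal{F}_d$ is also covering-feasible, the step is done: $\mathcal{F}_d$ is feasible and the same $(P_i)$ certifies $m(\mathcal{F}_d)\le|F\setminus(I_j+d)|=m(\mathcal{F})-1$.

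\emph{The obstruction case.}
Suppose instead that $\mathcal{F}_d$ is not covering-feasible. By Corollary~\ref{cor:cov-feasible} there is an $\mathcal{M}(\mathcal{F})$-tight set $X\not\ni d$ that spans $d$ in $M_j(\mathcal{F})$, and $X\ne\emptyset$ because $d$ is a non-loop of $M_j(\mathcal{F})$. Let $X_{\max}\supseteq X$ be the $\subseteq$-largest $\mathcal{M}(\mathcal{F})$-tight set, which exists by Proposition~\ref{prop:tight-union} ($\mathcal{M}(\mathcal{F})$ admits a covering since $\mathcal{F}$ is feasible), and note that $X_{\max}$ still spans $d$ in $M_j(\mathcal{F})$. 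Apply Lemma~\ref{lem:elimin} to $X_{\max}$ to obtain a feasible extension $\mathcal{F}'=(\langle I_i',S_i\rangle\colon i\in K)$ of $\mathcal{F}$ with $I_i'$ spanning $X_{\max}\cap S_i$ in $M_i$ and with no non-empty $\mathcal{M}(\mathcal{F}')$-tight set. Unwinding the definition of $M_j(\mathcal{F})$, the statement ``$X_{\max}$ spans $d$ in $M_j(\mathcal{F})$'' translates into $d\in\mathsf{span}_{M_j}\!\big(I_j'\cup(X_{\max}\cap S_j)\big)=\mathsf{span}_{M_j}(I_j')$. If already $e\in\mathsf{span}_{M_j}(I_j')$ we are done, with $m(\mathcal{F}')=0$.

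\emph{Propagating the measure — the main difficulty.}
It remains to bound $m(\mathcal{F}')$ in the case $e\notin\mathsf{span}_{M_j}(I_j')$, and this is the step I expect to require the most care, since it forces one to look inside the proof of Lemma~\ref{lem:elimin} at the packing $P_i^{\mathrm{pre}}:=(P_i\setminus X_{\max})\cup I_i'$ it builds from our $(P_i)$, and to re-choose the base packing so that the not-yet-spanned part of $F$ stays accessible at coordinate $j$. Here is the plan: every element of $F\setminus\mathsf{span}_{M_j}(I_j')$ lies in $P_j$, in $S_j$ and outside $X_{\max}$, hence in $P_j^{\mathrm{pre}}$; choose a maximal $F''\subseteq F\setminus\mathsf{span}_{M_j}(I_j')$ with $I_j'\cup F''$ independent in $M_j$. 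By maximality $I_j'\cup F''$ spans all of $F$, hence spans $e$; and $|F''|\le|F\setminus(I_j\cup\{d\})|=m(\mathcal{F})-1$, because the elements of $F\cap I_j$ and $d$ are by now spanned by $I_j'$. Extending $I_j'\cup F''$ and each $I_i'$ to maximal independent subsets of the $P_i^{\mathrm{pre}}$ yields an $\mathcal{F}'$-compatible base packing $(\tilde P_i)$ with $F(\tilde P_j)\subseteq I_j'\cup F''$ (uniqueness of the fundamental circuit), so $m(\mathcal{F}')\le|F(\tilde P_j)\setminus I_j'|\le|F''|\le m(\mathcal{F})-1$. In either case $m$ strictly drops, so the procedure halts with $e\in\mathsf{span}_{M_j}(I_j)$, and the accumulated feasible extension is the desired $\mathcal{F}'$.
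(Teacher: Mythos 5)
Your proof is correct and follows essentially the same approach as the paper: both arguments decrement a finite measure (the paper minimises $|I|$ over triples of a feasible extension, compatible packing, and a finite set $I$ with $e\in\mathsf{span}_{M_j}(I_j'\cup I)$; you iterate on $m(\mathcal{F})$), and both advance a step by trying to add one element to $I_j$, invoking Corollary~\ref{cor:cov-feasible} to extract a tight set when this fails, and applying Lemma~\ref{lem:elimin} together with the induced packing $(P_i\setminus X)\cup I_i'$ to show the measure drops. The only differences are cosmetic bookkeeping — you restrict to base packings and the canonical fundamental circuit $F(P_j)$ and pass to $X_{\max}$, whereas the paper works with arbitrary compatible packings, arbitrary finite spanning sets $I$, and the tight set supplied directly by Corollary~\ref{cor:cov-feasible}.
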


\begin{proof}
    We take a triple consisting of a feasible extension~$\mathcal{F}'$ of~$\mathcal{F}$, 
    an $\mathcal{F}'$-compatible packing ${\mathcal{P} = (P_i \colon i \in K)}$ 
    and an ${I \subseteq (P_j \setminus I_j')}$ with ${e \in \mathsf{span}_{M_j}(I\cup I'_j)}$ 
    in such a way that $\left|I\right|$ is as small as possible. 
    Since~$M_j$ is finitary, ${\left|I\right| =: n \in \mathbb{N}}$. If~${n = 0}$, then~$I_j'$ spans~$e$ in~$M_j$ and hence $\mathcal{F}'$ is as desired. 
    Suppose for a contradiction that~${n > 0}$ and let~${f \in I}$. 
    
    Adding~$f$ to~$I_j'$ results in a packing-feasible extension, since~$\mathcal{P}$ is a witness for this. 
    However, it cannot be covering-feasible since otherwise the triple it forms with~$\mathcal{P}$ and~${I-f}$ would contradict our choice. 
    
    Hence we may assume that adding~$f$ to~$I_j'$ results in an extension of $\mathcal{F}'$ which is not covering-feasible.
    We conclude by Corollary~\ref{cor:cov-feasible} that there is an ${\mathcal{M}(\mathcal{F}')}$-tight set~$X$ which spans~$f$ in~${M_j(\mathcal{F}')}$. 
    Let ${(R_i \colon i\in K)}$ be a covering of~$X$ in~${\mathcal{M}(\mathcal{F}')}$. 
    Applying Lemma~\ref{lem:elimin} to~$\mathcal{F}'$, $X$ and~${(R_i \colon i \in K)}$, 
    we get a feasible extension ${\mathcal{F}'' = (\left\langle I_i'', S_i \right\rangle \colon i \in K)}$ of~$\mathcal{F}'$ 
    such that~$I_i''$ spans~${X \cap S_i}$ in~$M_i$ for ${i \in K}$. 
    Hence, the system ${\mathcal{P}_X = ((P_i\setminus X) \cup I_i'' \colon i \in K)}$ is an $\mathcal{F}''$-compatible packing. 
    Let~$J$ be a maximal ${M_j/I_j''}$-independent subset of~${I \setminus X}$. 
    Then~${I_j'' \cup J}$ spans~$e$ in~$M_j$ because~${I_j' \cup I}$ does and~${I_j'' \supseteq I_j'}$ spans the edges~${X \cap S_j \supseteq X \cap I}$ in~$M_j$. 
    Furthermore, ${f \notin J}$ since the ${\mathcal{M}(\mathcal{F}')}$-tight~$X$ spans~$f$ in ${M_j(\mathcal{F}')}$ which implies by 
    ${f \in S_j}$ that~$X$ spans~$f$ in~${M_j/I_j'}$ and therefore so does~$I_j''$ in~$M_j$. 
    Hence, ${|J| < |I|}$ and therefore the feasible extension $\mathcal{F}''$ together with the $\mathcal{F}''$-compatible packing~$\mathcal{P}_X$ 
    and the set~$J$ contradicts the choice of~$\mathcal{F}'$, $\mathcal{P}$ and~$I$. 
\end{proof}

\begin{lemma}\label{lem:cospan-e}
    Let ${\mathcal{F} = (\left\langle I_i, S_i\right\rangle \colon i\in K)}$ be feasible with respect to~$\mathcal{M}$, let ${e \in E}$ and assume that~$M_j$ is cofinitary for some~${j \in K}$. 
    Then there exists a feasible extension \linebreak ${\mathcal{F}' = (\left\langle I_i, S_i'\right\rangle \colon i\in K)}$ of~$\mathcal{F}$ 
    such that~$e$ is spanned by~${E \setminus S_j'}$ in~$M^{*}_j$.
\end{lemma}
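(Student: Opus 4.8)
The plan is to run the same kind of minimisation-and-recursion argument as in the proof of Lemma~\ref{lem:span-e}, but carried out in the \emph{dual} matroid $M_j^{*}$, which is finitary because $M_j$ is cofinitary, and keeping track of the ``complement side'' $E\setminus S_i$ rather than the $I_i$. First I would invoke Observation~\ref{obs:S_i I_j} to assume $S_i\cap I_l=\emptyset$ whenever $i\ne l$, and fix an $\mathcal{F}$-compatible \emph{base} packing $\mathcal{P}=(P_i\colon i\in K)$; one gets such a $\mathcal{P}$ from an arbitrary $\mathcal{F}$-compatible packing by shrinking each spanning set to a base of $M_i$ still containing $I_i$. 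Then $E\setminus P_j$ is a base of $M_j^{*}$, so it spans $e$ in $M_j^{*}$; since $(E\setminus S_j)\cup(S_j\setminus P_j)=E\setminus P_j$ and $M_j^{*}$ is finitary, there is a finite $I\subseteq S_j\setminus P_j$ with $e\in\mathsf{span}_{M_j^{*}}((E\setminus S_j)\cup I)$. Informally, $I$ records which edges still have to be evicted from $S_j$ so that $E\setminus S_j$ co-spans $e$.

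Next I would choose, among all triples $(\mathcal{F}',\mathcal{P},I)$ consisting of a feasible extension $\mathcal{F}'=(\langle I_i,S_i'\rangle\colon i\in K)$ of $\mathcal{F}$, an $\mathcal{F}'$-compatible base packing $\mathcal{P}$, and a finite $I\subseteq S_j'\setminus P_j$ with $e\in\mathsf{span}_{M_j^{*}}((E\setminus S_j')\cup I)$, one with $|I|$ minimal. If $|I|=0$ then $E\setminus S_j'$ spans $e$ in $M_j^{*}$ and $\mathcal{F}'$ is the required extension. Otherwise fix $f\in I$ and let $\mathcal{F}''$ be obtained from $\mathcal{F}'$ by deleting $f$ from $S_j'$. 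Because $(E\setminus S_j')\cup I\subseteq E\setminus P_j$ is independent in $M_j^{*}$, deleting $f$ keeps $S_j'-f$ spanning in $M_j$; because $f\notin P_j$, the packing $\mathcal{P}$ stays compatible; and $(E\setminus S_j'')\cup(I-f)$ still spans $e$. So if $\mathcal{F}''$ were feasible, the triple $(\mathcal{F}'',\mathcal{P},I-f)$ would contradict minimality of $|I|$. Hence $\mathcal{F}''$ is not feasible, and analysing $\mathcal{M}(\mathcal{F}')$ (coverage and spanning are clearly still fine, so it must be covering-feasibility that breaks) shows this can only happen when $f$ is a loop of $M_i(\mathcal{F}')$ for every $i\ne j$, equivalently when $f$ lies in the $j$-th set of every $\mathcal{F}'$-compatible covering.

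In that ``forced'' case $f$ cannot be a loop of $M_j(\mathcal{F}')$ too, since otherwise no covering could cover $f$, contradicting covering-feasibility of $\mathcal{F}'$; thus $f\notin\mathsf{span}_{M_j}(I_j')$ and $I_j'+f$ is independent in $M_j$. I would then move $f$ into $I_j'$, re-establishing covering-feasibility from the forced coverings and packing-feasibility from $\mathcal{P}$ by the single exchange $P_j\mapsto P_j+f-f'$ with $f'\in C_{M_j}(f,P_j)\setminus I_j'$, and argue that a strictly smaller deficit becomes available. I expect this last step to be the main obstacle: it is the dual of the tight-set absorption in Lemma~\ref{lem:span-e}, and the delicate point is to make sure that absorbing forced edges into the $I_j$-side really reduces $|I|$ rather than merely relocating the difficulty. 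The structural fact that should make it work is that the set $F$ of \emph{all} edges forced into the $j$-th covering set satisfies that $F\cup I_j$ is independent in $M_j$ — since $F\cup I_j$ lies inside the $j$-th set of any $\mathcal{F}'$-compatible covering — so $E\setminus(F\cup I_j)$ is spanning in $M_j^{*}$ and the deficit may be chosen disjoint from $F$ from the outset; maintaining this invariant along the recursion, re-absorbing newly-forced edges into $I_j$ and re-selecting the base packing so that $F\subseteq P_j$, together with the minimality of $|I|$, delivers the contradiction, and the resulting $\mathcal{F}'$ with $|I|=0$ is the desired feasible extension.
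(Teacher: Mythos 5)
You take a genuinely different route from the paper. The paper reduces Lemma~\ref{lem:cospan-e} to Lemma~\ref{lem:span-e} by a structural dualization: for ${\left|K\right|=2}$ one passes from~$\mathcal{M}$ to~$\mathcal{M}^*$ (coverings become packings and vice versa, and ${\mathcal{F}\mapsto\mathcal{F}^*=(\langle E\setminus S_i,E\setminus I_i\rangle\colon i\in K)}$ preserves feasibility), and for ${\left|K\right|>2}$ one uses the auxiliary family~$\widehat{\mathcal{M}}$ on~${E\times K}$ with an extra partition-type matroid. Lemma~\ref{lem:span-e} is then invoked as a black box. You instead try to redo the minimisation argument from scratch inside~$M_j^*$, working directly on the ``$S$-side''. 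The starting idea is sound ($M_j^*$ is finitary and ${E\setminus P_j}$ is one of its bases), but two concrete things go wrong.

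First, your characterisation of when deleting~$f$ from~$S_j'$ breaks feasibility is incorrect. You write that the failure ``can only happen when $f$ is a loop of~$M_i(\mathcal{F}')$ for every ${i\neq j}$, equivalently when~$f$ lies in the $j$-th set of every $\mathcal{F}'$-compatible covering.'' The second clause is the right condition, but the claimed equivalence with the first is false: by Corollary~\ref{cor:tight-char}, ``$f$ lies outside position~$i$ in every covering'' corresponds to the existence of an ${\mathcal{M}(\mathcal{F}')}$-tight set~$X$ spanning~$f$ in~$M_i(\mathcal{F}')$, which is strictly weaker than~$f$ being a loop. The recursion's well-foundedness in your argument leans on the loop picture, so this is not merely a slip of phrasing.

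Second, your absorption step ``move $f$ into~$I_j'$'' changes~$I_j$, but the statement of the lemma fixes the~$I_i$ and only shrinks the~$S_i$: the extension must have the form ${\mathcal{F}'=(\langle I_i,S_i'\rangle\colon i\in K)}$. The paper's dualization handles this automatically, since applying Lemma~\ref{lem:span-e} to~$\mathcal{F}^*$ grows the sets~${E\setminus S_i}$, hence shrinks the~$S_i$, leaving the~$I_i$ intact. To make your direct approach rigorous you would need a dual counterpart of Lemma~\ref{lem:elimin} that absorbs the dual obstruction by shrinking~$S_i$'s rather than growing~$I_i$'s — essentially re-deriving the cotight-set machinery that the dualization avoids. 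You flag the final re-absorption step as ``the main obstacle,'' and that assessment is right: this is precisely where the work lies and the proposal does not close it. Finally, even if all this were repaired, the case ${\left|K\right|>2}$ needs an extra device since complements of coverings are then not packings of~$\mathcal{M}^*$; the paper's~$\widehat{\mathcal{M}}$ construction is exactly the fix for that, and some analogue would be needed in your approach as well.
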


\begin{proof}
    We reduce the statement to Lemma~\ref{lem:span-e} using a dualisation argument. 
    Let ${\mathcal{M}^{*} = (M_i^{*} \colon i \in K)}$. 
    Assume first that~${\left|K\right|=2}$, say K=\{0,1\}. 
    Then ${(R_0, R_1)}$ is a covering of~$\mathcal{M}$ if and only if ${(E \setminus R_0, E \setminus R_1)}$ is a packing of~$\mathcal{M}^{*}$ and the analogue statement holds for packings. 
    Therefore an ${\mathcal{F} = (\left\langle I_i,S_i \right\rangle \colon i \in K)}$ is $\mathcal{M}$-feasible 
    if and only if ${\mathcal{F}^{*} := (\left\langle E \setminus S_i , E \setminus I_i \right\rangle \colon i \in K)}$ is $\mathcal{M}^{*}$-feasible. 
    Thus we can simply apply Lemma~\ref{lem:span-e} with~$\mathcal{F}^{*}$ and~$\mathcal{M}^{*}$, and ``dualising back'' the resulting extension.
    
    For ${\left|K\right| > 2}$ the argument is essentially the same except that we need to overcome some unpleasant technical difficulties. 
    Namely if ${(R_i \colon i \in K)}$ is a covering of~$\mathcal{M}$ then ${( E \setminus R_i \colon i \in K)}$ is usually 
    not a packing of~$\mathcal{M}^{*}$ when~${\left|K\right| > 2}$. 
    Indeed, instead of being pairwise disjoint, the sets ${E \setminus R_i}$ satisfy the weaker condition that each edge is 
    missing from at least one~${E \setminus R_i}$. 
    A similar problem occurs with the the dual object of packings. 
    To fix this, we use a technique from~\cite{bowler2015matroid}. 
    We define an auxiliary family 
    ${\widehat{\mathcal{M}} = (\widehat{M_i} \colon i\in K\cup \{ K \})}$ 
    of matroids on the common ground set~${E \times K}$. 
    For~${e \in E}$ and~${i \in K}$ the edge ${(e,j)}$ is a loop in~$\widehat{M_i}$ whenever~${i \neq j \in K}$. 
    A subset of~${E \times \{ i \}}$ is independent in~$\widehat{M_i}$ if and only if its projection to the first coordinate is independent in~$M_i^{*}$. 
    Finally, a set is defined to be independent in~$M_K$ if it meets ${\{ e \} \times K}$ in at most one element for every~${e \in E}$. 
    
    On the one hand, it follows directly from the definitions that if for an $\mathcal{M}$-feasible family
    ${\mathcal{F} = (\left\langle I_i , S_i \right\rangle \colon i \in K)}$ we take 
    ${\widehat{I}_i := (E \setminus S_i) \times \{ i \}}$, 
    ${\widehat{S}_i := (E \setminus I_i) \times \{ i \}}$, 
    ${\widehat{I}_K := \emptyset}$ and 
    ${\widehat{S}_K := E \times K}$,
    then ${\widehat{\mathcal{F}} := \left( \left\langle \widehat{I}_i , \widehat{S}_i \right\rangle \colon i \in K \cup \{ K \} \right)}$ is 
    $\widehat{\mathcal{M}}$-feasible. 
    
    On the other hand, if some ${\widehat{\mathcal{F}} = \left( \left\langle \widehat{I}_i,\widehat{S}_i\right\rangle \colon i \in K \cup \{ K\}\right)}$ is $\widehat{\mathcal{M}}$-feasible, then 
    \[
        {\left( \left\langle \mathsf{proj_E}\left[ (E\times \{ i \}) \setminus \widehat{S}_i\right] , \mathsf{proj_E}\left[ (E\times \{ i \}) 
        \setminus \widehat{I}_i\right] \right\rangle \colon i\in K \right)}
    \]
    is $\mathcal{M}$-feasible. 
    From this point the proof goes the same way as in the case~${\left|K\right| = 2}$ using~$\widehat{\mathcal{M}}$ instead of~$\mathcal{M}^{*}$. 
\end{proof}

\section{Proof of the main result}
\label{sec_mainresult}

The potentially uncountable size of~$K$ did not make any difference in the proofs so far but now it would cause some technical difficulties which motivates the following claim.
    
\begin{clm}
    \label{claim:wlog-lambda=3}
    Theorem~\ref{thm:main-thm-intro} is implied by its special case where~${\left|K\right| = 3}$.
\end{clm}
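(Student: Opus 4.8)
The plan is to encode an arbitrary family $\mathcal M = (M_i \colon i\in K)$ as a family of just \emph{three} matroids on the enlarged ground set $\widehat E := E \times K$, so that base coverings, base packings and base partitionings correspond in both directions; the assumed $|K|=3$ case then applies to the three‑element family and transfers back. The easy direction of Theorem~\ref{thm:main-thm-intro} (a partitioning is in particular a covering and a packing) needs no reduction, so we only treat the converse.

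First I would split $K = K_0 \dcup K_1$ with $M_i$ finitary for $i\in K_0$ and cofinitary for $i\in K_1$ (possible by hypothesis, ties broken arbitrarily), and on $\widehat E$ define three matroids. Let $N_0$ be the direct sum $\bigoplus_{i\in K_0} M_i$, with the $i$‑th summand living on the slice $E\times\{i\}$, and with every edge of $E\times K_1$ declared a loop; let $N_1$ be defined symmetrically with the roles of $K_0$ and $K_1$ exchanged; and let $P$ be the matroid on $\widehat E$ whose independent sets are the subsets meeting each column $\{e\}\times K$ in at most one point, i.e.\ $P=\bigoplus_{e\in E}U_e$ with $U_e$ the rank‑one uniform matroid on $\{e\}\times K$ (so $P$ is indeed a matroid, and it is finitary since its circuits are the $2$‑element subsets of the columns). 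Since finitariness is preserved under direct sums and under declaring edges loops, $N_0$ and $P$ are finitary; dualising, $N_1$ and $P^{*}$ are cofinitary. Hence $\widehat{\mathcal M} := (N_0,N_1,P^{*})$ is a family of three matroids on $\widehat E$, each finitary or cofinitary, to which the special case of Theorem~\ref{thm:main-thm-intro} applies.

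The combinatorial core is the following dictionary. A base of $N_0$ is a set $D\subseteq E\times K_0$ whose slice $D\cap(E\times\{i\})$ projects onto a base of $M_i$ for each $i\in K_0$, and symmetrically for $N_1$ on $E\times K_1$; a base of $P^{*}$ is a set meeting every column $\{e\}\times K$ in all but exactly one point. Given a base covering (resp.\ packing, resp.\ partitioning) $(B_i\colon i\in K)$ of $\mathcal M$, put $B:=\{(e,i)\colon e\in B_i\}$; then $B^{0}:=B\cap(E\times K_0)$ and $B^{1}:=B\cap(E\times K_1)$ are bases of $N_0$ and $N_1$, and one takes $D:=\widehat E\setminus\{(e,j_e)\colon e\in E\}$, choosing $j_e$ to be an index with $e\in B_{j_e}$ (in the packing and partitioning cases the unique such index, and arbitrary where $e$ lies in no $B_i$). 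A short check shows $(B^{0},B^{1},D)$ is a base covering (resp.\ packing, resp.\ partitioning) of $\widehat{\mathcal M}$. Conversely, from a base covering (resp.\ packing, resp.\ partitioning) $(D_0,D_1,D_2)$ of $\widehat{\mathcal M}$, the unique point of the column of $e$ omitted by the $P^{*}$‑base $D_2$ lies in $D_0\cup D_1$ (in the covering and partitioning cases, using $D_0\cup D_1\cup D_2=\widehat E$), and membership of such points in $D_0$ versus $D_1$ singles out indices in $K_0$ versus $K_1$; reading off $B_i:=\{e\colon(e,i)\in D_0\cup D_1\}$ recovers a base covering (resp.\ packing, resp.\ partitioning) of $\mathcal M$. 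The crucial point in the last step is that when $D_0,D_1,D_2$ are pairwise disjoint, $D_0\cup D_1\subseteq\widehat E\setminus D_2$ meets each column in at most one point, which is exactly the disjointness of the $B_i$.

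Feeding these three equivalences through the assumed $|K|=3$ case finishes the proof: since $\mathcal M$ admits a base covering and a base packing, so does $\widehat{\mathcal M}$; hence $\widehat{\mathcal M}$ admits a base partitioning; hence so does $\mathcal M$. I expect the only real care to be needed in the column bookkeeping — in particular, choosing the \emph{dual} partition matroid $P^{*}$ (whose bases omit exactly one point per column) rather than $P$ itself, precisely so that a partitioning of $\widehat{\mathcal M}$ decodes to a partitioning of $\mathcal M$ and not of the dual family $\mathcal M^{*}$ — together with the routine verification that $N_0$, $N_1$, $P^{*}$ remain finitary or cofinitary.
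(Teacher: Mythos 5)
Your proposal is correct and takes essentially the same approach as the paper: you construct the same auxiliary three-matroid family on $E\times K$ (the direct sum of the finitary matroids with the remaining slices declared loops, the direct sum of the cofinitary matroids likewise, and the dual of the column partition matroid, i.e.\ the matroid whose bases omit exactly one point per column) and establish the same back-and-forth correspondence between coverings, packings, and partitionings of $\mathcal M$ and of the three-element family.
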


\begin{proof}
    Let~$\mathcal{M}$ be as in Theorem~\ref{thm:main-thm-intro}. 
    We build a new matroid family~${\mathcal{M}' = (M_0',M_1',M_2')}$ on the ground set~${E \times K}$ 
    such that~$\mathcal{M}'$ admits a packing/covering/partitioning 
    if and only if~$\mathcal{M}$ does. 
    Furthermore, $M_0'$ will be finitary 
    while~$M_1'$ and~$M_2'$ will be cofinitary. 
    
    First we take a copy~${\tilde{M_i}}$ of~$M_i$ on~${E \times\{ i \}}$ via the bijection ${e \mapsto (e,i)}$ for~${i\in K}$. 
    Let ${F := \{ i \in K \colon M_i \text{ is finitary} \}}$ and 
    let~$M_0'$ be the direct sum of the matroids~$\tilde{M_i}$ for~${i \in F}$ 
    together with the matroid~$(E \times (K \setminus F), \{ \emptyset\})$. 
    The construction of~$M_1'$ is analogous, we take the direct sum of the cofinitary copies together with~$(E \times F, \{\emptyset\})$. 
    Finally, we take~$M_2'$ to be the matroid whose set of circuits is ${\{ \{ e \} \times K \colon e \in E \}}$, or, in other words, 
    the direct sum of the duals of the $1$-uniform\footnote{A matroid is $1$-uniform if a set is independent if and only if it has at most~$1$ element.} matroids on~${\{ e \} \times K}$ for each~${e \in E}$. 
    It is easy to check directly from the definitions that~$\mathcal{M}'$ has the desired property. 
    Indeed, suppose for example that~${(R_i \colon i\in K )}$ is a covering of~$\mathcal{M}$. 
    Then we choose for each~${e \in E}$ an~${i_e \in K}$ such that~${e \in R_{i_e}}$. 
    Then ${I_0 := \bigcup_{i\in F} R_i \times \{ i \}}$ is independent in~$M_0'$, 
    ${I_1 := \bigcup_{i \in K \setminus F} R_i \times \{ i \}}$ is independent in~$M_1'$ and 
    ${I_2 := \bigcup_{e\in E} (\{ e \} \times K) \setminus \{ (e,i_e) \}}$ is independent in~$M_2'$. 
    Moreover, ${I_0 \cup I_1 \cup I_2 = E\times K}$. 
    The proof of the remaining implications are similarly easy and we leave them to the reader. 
\end{proof}

\begin{proof}
    [Proof of Theorem~\ref{thm:main-thm-intro}]
    By Claim~\ref{claim:wlog-lambda=3}, we may assume without loss of generality that~$K$ is finite. 
    Let ${F := \{ i \in K \colon M_i \text{ is finitary} \}}$ as above. 
    We apply transfinite induction on~${\left|E\right|}$. 
    If~$E$ is finite then every packing is a covering and every covering is a packing under the assumption that both exist. 
    Indeed, take a base packing. 
    Suppose for a contradiction that it is not a covering. 
    Then the sum of the ranks of the matroids is strictly less than~${\left| E \right|}$. 
    But then there is no covering, which is a contradiction. 
    
    Let ${\left|E\right| = \aleph_0}$ and let us fix an enumeration ${\{ e_n \colon n\in \mathbb{N} \}}$ of~$E$. 
    We build by recursion an increasing sequence ${(\mathcal{F}_n \colon n \in \mathbb{N})}$ 
    of feasible families where ${\mathcal{F}_{n} = (\left\langle I_i^{n}, S_i^{n}\right\rangle \colon i \in K)}$. 
    Let ${I_i^{0} = \emptyset}$ and ${S_i^{0} = E}$ for~${i \in K}$. 
    Note that ${(\left\langle I_i^0, S_i^0 \right\rangle \colon i \in K)}$ is feasible by the assumption that $\mathcal{M}$ admits a packing and a covering. 
    We demand that for every~${n \in \mathbb{N}}$ 
    \begin{enumerate}
        [label=(\arabic*)]
        \item\label{item:main-1-1} ${e_n \in\bigcup_{i\in K} I_i^{n+1} }$,
        \item\label{item:main-1-2} $e_n$ is spanned by~$I_i^{n+1}$ in~$M_i$ for~${i \in F}$, 
        \item\label{item:main-1-3} $e_n$ is spanned by~${E \setminus S_i^{n+1}}$ in~$M_i^{*}$ for~${i \in K\setminus F}$,
        \item\label{item:main-1-4} ${S_i^{n} \cap I_j^{n} = \emptyset}$ for~${j \neq i}$.
    \end{enumerate}
    Each step of the recursion can be done by applying in order Lemma~\ref{lem:cover-e} to arrange for~\ref{item:main-1-1}, Lemma~\ref{lem:span-e} to all $i \in F$ and Lemma~\ref{lem:cospan-e} to all $i \in K \setminus F$ to ensure \ref{item:main-1-2} and \ref{item:main-1-3} (this uses the finiteness of $K$), and Observation~\ref{obs:S_i I_j} for property \ref{item:main-1-4}. 
    Let ${B_i := \bigcup_{n=0}^{\infty} I_i^{n}}$ for each~${i \in K}$. 
    We claim ${(B_i \colon i\in K)}$ is the desired partitioning of~$\mathcal{M}$.
    
    Since the sets ${\{ I_i^{n} \colon i\in K \}}$ are pairwise disjoint for every~${n \in \mathbb{N}}$, so are the sets~$B_i$. 
    Also, by property~\ref{item:main-1-1}, $e_n$ is covered by some~$I_i^{n+1}$ and hence by~$B_i$ as well. 
    Therefore the sets~$B_i$ form a partition of~$E$. By property~\ref{item:main-1-4} it implies that ${B_i = \bigcap_{n=0}^{\infty} S_i^{n}}$ for each~${i \in K}$. 
    
    It remains to show that~$B_i$ is a base of~$M_i$ for each~${i \in K}$. 
    For~${i \in F}$, $B_i$ is independent by Fact~\ref{Fact-finitary} and by property~\ref{item:main-1-2}, $e_n$ is spanned by~$I_i^{n+1}\subseteq B_i$ in~$M_i$ for each~$n$. 
    Hence~$B_i$ is a basis of~$M_i$ for~${i \in F}$. 
    For~${i \in K \setminus F}$, ${B_i = \bigcap_{n=0}^{\infty} S_i^{n}}$ is spanning for~${i \in K\setminus F}$ by Fact~\ref{Fact-cofinitary}, 
    and by property~\ref{item:main-1-3} $e_n$ is spanned by~${E \setminus S_i^{n+1}\subseteq E\setminus B_i}$ in~$M_i^{*}$ for~${i \in K\setminus F}$. 
    Hence~$B_i$ is a basis of~$M_i$ for~${i \in K\setminus F}$. 
    This completes the proof of the countable case. 
    
    Suppose ${\kappa := \left|E\right| > \aleph_0}$. 
    Let ${(R_i \colon i \in K )}$ be a base covering of~$\mathcal{M}$ 
    and let ${(P_i \colon i \in K )}$ be a base packing. 
    We construct an increasing continuous chain 
    ${\left\langle E_\alpha \colon \alpha \leq \kappa \right\rangle}$ of subsets of~$E$ where 
    \begin{enumerate}
        [label=(\roman*)]
        \item\label{item:main-2-1} ${E_0 = \emptyset}$,
        \item\label{item:main-2-2} ${E_\kappa = E}$,
        \item\label{item:main-2-3} ${\left| E_\alpha \right| < \kappa}$ for~${\alpha < \kappa}$, 
        \item\label{item:main-2-4} ${E_\alpha \cap R_i}$ and ${E_\alpha \cap P_i}$ spans~$E_\alpha$ in~$M_i$ for~${i \in F}$, 
        \item\label{item:main-2-5} ${E_\alpha \cap (E \setminus R_i)}$ and ${E_\alpha \cap (E \setminus P_i)}$ 
        spans~${E_\alpha}$ in~$M_i^{*}$ for~${i \in K\setminus F}$. 
    \end{enumerate}
    
    Note that, since~$M_i$ is finitary for~${i \in F}$ and cofinitary for~${i \in K \setminus F}$ and since~$K$ is 
    finite, 
    restoring properties~\ref{item:main-2-4} and~\ref{item:main-2-5} after adding some~$e$ to an~$E_\alpha$ only requires adding some 
    countably many finite fundamental circuits and cocircuits to~$E_{\alpha}$, 
    thus the construction of such a chain above can be done by a straightforward transfinite recursion. 
    Note that the sets ${E^{\alpha} := E_{\alpha+1} \setminus E_\alpha\ (\alpha<\kappa)}$ form a partition of~$E$. 
    Let 
    \[
        M_i^{\alpha} := 
        \begin{cases} 
            (M_i\restrict E_{\alpha+1})/E_\alpha & \mbox{if } i \in F \\
            ((M_i^{*}\restrict E_{\alpha+1})/E_\alpha)^{*} & \mbox{if } i \in K \setminus F.
        \end{cases}
    \]
    
    \begin{lemma}
        \label{lem:smaller-problem}
        For every ${\alpha < \kappa}$ the matroid family
        ${\mathcal{M}_\alpha := (M_i^{\alpha} \colon i\in K)}$ on~$E^{\alpha}$ 
        admits a covering and a packing.
    \end{lemma}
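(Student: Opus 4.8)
The plan is to obtain a covering and a packing of $\mathcal{M}_\alpha$ by simply intersecting the given base covering $(R_i \colon i \in K)$ and the given base packing $(P_i \colon i \in K)$ of $\mathcal{M}$ with $E^\alpha = E_{\alpha+1}\setminus E_\alpha$. So set $R_i^\alpha := R_i \cap E^\alpha$ and $P_i^\alpha := P_i \cap E^\alpha$ for $i \in K$. Since $\bigcup_{i \in K} R_i = E$ we get $\bigcup_{i \in K} R_i^\alpha = E^\alpha$, and since the $P_i$ are pairwise disjoint so are the $P_i^\alpha$. Hence it suffices to prove that, for each $i \in K$, the set $R_i^\alpha$ is independent in $M_i^\alpha$ and the set $P_i^\alpha$ is spanning in $M_i^\alpha$; in fact I shall show that both are bases of $M_i^\alpha$.

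The one non-routine ingredient is the standard fact that a base of a matroid splits compatibly into a base of a restriction and a base of the corresponding contraction: if $B$ is a base of a matroid $N$ and $X \subseteq E(N)$ is such that $B \cap X$ spans $X$ in $N$ (equivalently, $B \cap X$ is a base of $N \restrict X$), then $B \setminus X$ is a base of $N/X$. This is immediate from the matroid axioms; see~\cite{nathanhabil}.

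Now fix $i \in F$, so $M_i^\alpha = (M_i \restrict E_{\alpha+1})/E_\alpha$. Since $R_i$ is a base of $M_i$ and, by property~\ref{item:main-2-4} at $\alpha+1$, the set $R_i \cap E_{\alpha+1}$ spans $E_{\alpha+1}$ in $M_i$, the set $R_i \cap E_{\alpha+1}$ is a base of $N := M_i \restrict E_{\alpha+1}$. By property~\ref{item:main-2-4} at $\alpha$ the set $(R_i \cap E_{\alpha+1})\cap E_\alpha = R_i \cap E_\alpha$ spans $E_\alpha$ in $N$, so the splitting fact yields that $R_i^\alpha = (R_i \cap E_{\alpha+1})\setminus E_\alpha$ is a base of $N/E_\alpha = M_i^\alpha$. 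The same argument with $P_i$ in place of $R_i$, again via property~\ref{item:main-2-4}, shows $P_i^\alpha$ is a base of $M_i^\alpha$. For $i \in K \setminus F$ we dualise: here $M_i^\alpha = ((M_i^* \restrict E_{\alpha+1})/E_\alpha)^*$ and $E \setminus R_i$ is a base of $M_i^*$; by property~\ref{item:main-2-5} at $\alpha+1$ the set $(E \setminus R_i)\cap E_{\alpha+1}$ is a base of $M_i^* \restrict E_{\alpha+1}$, and by property~\ref{item:main-2-5} at $\alpha$ its intersection with $E_\alpha$ spans $E_\alpha$ there, so the splitting fact gives that $(E \setminus R_i)\cap E^\alpha$ is a base of $(M_i^* \restrict E_{\alpha+1})/E_\alpha$; its complement in $E^\alpha$, namely $R_i^\alpha$, is therefore a base of $M_i^\alpha$. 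The same computation with $P_i$, using property~\ref{item:main-2-5}, shows $P_i^\alpha$ is a base of $M_i^\alpha$. Consequently $(R_i^\alpha \colon i \in K)$ is a covering and $(P_i^\alpha \colon i \in K)$ is a packing of $\mathcal{M}_\alpha$, as required.

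I do not anticipate a real obstacle. The only points needing care are the splitting fact just quoted and the correct dualisation in the cofinitary case — but the chain $(E_\alpha \colon \alpha \leq \kappa)$ was constructed precisely so that property~\ref{item:main-2-4} feeds into that fact in the finitary case and property~\ref{item:main-2-5} feeds into it in the cofinitary case.
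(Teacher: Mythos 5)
Your proof is correct and follows essentially the same route as the paper: intersect the given base covering and packing of $\mathcal{M}$ with $E^\alpha$ and use properties~\ref{item:main-2-4} and~\ref{item:main-2-5} of the chain, dualising in the cofinitary case. The only cosmetic difference is that you package the argument around the explicit ``splitting'' fact (base of $N$ restricting to a base of $N\restrict X$ gives a base of $N/X$) and thereby show the $R_i^\alpha$ and $P_i^\alpha$ are in fact bases, whereas the paper argues independence of $R_i^\alpha$ (resp.\ spanning of $P_i^\alpha$) more directly; the content is the same.
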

    
    \begin{proof}
        Let ${R_i^{\alpha} := R_i\cap E^{\alpha}}$. 
        We show that ${(R_i^{\alpha} \colon i\in K)}$ is a covering of~$\mathcal{M}_{\alpha}$. 
        The non-trivial part of the statement is that~$R_i^{\alpha}$ is independent in~$M_{i}^{\alpha}$. 
        Property~\ref{item:main-2-4} ensures that for~${i \in F}$, the set ${R_i \cap E_\alpha}$ spans~$E_\alpha$ in~$M_i$, 
        and hence ${R_i \cap E^{\alpha}}$ remains independent after the contraction of~$E_\alpha$ in~$M_i$. 
        For~${i \in K\setminus F}$, we know by property~\ref{item:main-2-5} 
        that ${E_{\alpha+1} \setminus R_i}$ spans~$E_{\alpha+1}$ in~$M_i^{*}$. 
        Thus ${E^{\alpha} \setminus R_i}$ spans~$E^{\alpha}$ in~${M_i^{*}/E_\alpha}$ and hence in ${M_i^{*}/E_\alpha \restrict E_{\alpha+1}}$ as well. 
        By taking the dual it means that ${E^{\alpha} \cap R_i}$ is independent in~${M_i^{\alpha}}$. 
        The proof of the fact that~$\mathcal{M}_\alpha$ admits a packing is analogous. 
    \end{proof}
    
    By applying Lemma~\ref{lem:smaller-problem} and then the induction hypothesis for~$\mathcal{M}_\alpha$ (see property~\ref{item:main-2-3}), 
    we may fix a partition ${(B^{\alpha}_i \colon i\in K)}$ of~$E^{\alpha}$ where~$B_i^{\alpha}$ is a base of~$M_i^{\alpha}$. 
    Note that the set family ${\{ B_i^{\alpha} \colon \alpha < \kappa, i \in K \}}$ forms a partition of~$E$ 
    and hence so does the family~${(B_i \colon i\in K)}$ where ${B_i :=\bigcup_{\alpha<\kappa} B_i^{\alpha}}$. 
    It remains to show that~$B_i$ is a base of~$M_i$. 
    
    \begin{lemma}
        \label{lem:build-base}
        For ${i \in K}$ and ${\alpha \leq \kappa}$, ${B_{i,\alpha} := \bigcup_{\beta<\alpha} B_i^{\beta}}$ is a base of 
        $\begin{cases}
            M_i\restrict E_\alpha  &\mbox{if } i\in F \\
            M_i.E_\alpha & \mbox{if }i\in K \setminus F.  
        \end{cases}$
    \end{lemma}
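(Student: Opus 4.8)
The plan is to fix $i\in K$ and argue by transfinite induction on $\alpha\le\kappa$, reducing the cofinitary case to the finitary one by dualisation so that only a single induction is needed. To this end, set $N:=M_i$ and $D^\beta:=B_i^\beta$ if $i\in F$, and $N:=M_i^{*}$ and $D^\beta:=E^\beta\setminus B_i^\beta$ if $i\in K\setminus F$; in either case $N$ is finitary. The first observation is that $D^\beta$ is a base of $(N\restrict E_{\beta+1})/E_\beta$ for every $\beta<\kappa$: for $i\in F$ this is the definition of $M_i^\beta$, while for $i\in K\setminus F$ it holds because $B_i^\beta$ is a base of $M_i^\beta=\big((M_i^{*}\restrict E_{\beta+1})/E_\beta\big)^{*}$, so by the definition of duality its complement $E^\beta\setminus B_i^\beta$ inside $E^\beta$ is a base of $(M_i^{*}\restrict E_{\beta+1})/E_\beta=(N\restrict E_{\beta+1})/E_\beta$. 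I will also use the standard gluing property of contractions (see~\cite{BDKW13}): if $X\subseteq Y\subseteq E(N)$, $B_1$ is a base of $N\restrict X$ and $B_2$ is a base of $(N\restrict Y)/X$, then $B_1\cup B_2$ is a base of $N\restrict Y$; this is immediate from the characterisation of the independent and spanning sets of a contraction.

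I would then prove by induction on $\alpha\le\kappa$ that $D_{<\alpha}:=\bigcup_{\beta<\alpha}D^\beta$ is a base of $N\restrict E_\alpha$. The case $\alpha=0$ is trivial since $E_0=\emptyset$. For a successor $\alpha=\gamma+1$, the induction hypothesis gives that $D_{<\gamma}$ is a base of $N\restrict E_\gamma$ and, by the first observation, $D^\gamma$ is a base of $(N\restrict E_{\gamma+1})/E_\gamma$; applying the gluing property with $X=E_\gamma\subseteq Y=E_{\gamma+1}$ shows that $D_{<\gamma+1}=D_{<\gamma}\cup D^\gamma$ is a base of $N\restrict E_{\gamma+1}$. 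For a limit $\alpha$ the chain $\langle E_\beta\colon\beta\le\kappa\rangle$ is continuous, so $E_\alpha=\bigcup_{\beta<\alpha}E_\beta$ and $D_{<\alpha}=\bigcup_{\beta<\alpha}D_{<\beta}$ is an increasing union of sets which, by the induction hypothesis, are independent in $N$; since $N$, and hence $N\restrict E_\alpha$, is finitary, Fact~\ref{Fact-finitary} yields that $D_{<\alpha}$ is independent in $N\restrict E_\alpha$. It is also spanning there, because any $e\in E_\alpha$ already lies in some $E_\beta$ with $\beta<\alpha$, where $D_{<\beta}$ spans $e$. Hence $D_{<\alpha}$ is an independent spanning set, i.e.\ a base, of $N\restrict E_\alpha$.

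It then remains to read off the two cases of the lemma. For $i\in F$ the statement just proved is precisely that $B_{i,\alpha}=\bigcup_{\beta<\alpha}B_i^\beta$ is a base of $M_i\restrict E_\alpha$. For $i\in K\setminus F$, recall that the sets $E^\beta$ partition $E$ and $B_i^\beta\subseteq E^\beta$, so $E_\alpha=\bigdcup_{\beta<\alpha}E^\beta$ and $D_{<\alpha}=\bigcup_{\beta<\alpha}(E^\beta\setminus B_i^\beta)=E_\alpha\setminus B_{i,\alpha}$; since $D_{<\alpha}$ is a base of $M_i^{*}\restrict E_\alpha$, taking duals gives that $B_{i,\alpha}$ is a base of $(M_i^{*}\restrict E_\alpha)^{*}=M_i.E_\alpha$, as required. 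I expect the only real subtlety to lie in this dualisation bookkeeping — correctly identifying that the pieces to be reassembled in the cofinitary case are the relative complements $E^\beta\setminus B_i^\beta$ and that these glue to the complement $E_\alpha\setminus B_{i,\alpha}$ — together with the observation that it is exactly the limit step, which rests on Fact~\ref{Fact-finitary}, that forces the cofinitary half of the lemma to be stated in terms of the contraction $M_i.E_\alpha$ rather than a restriction.
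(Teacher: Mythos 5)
Your proof is correct and follows essentially the same approach as the paper: transfinite induction on $\alpha$ with the successor step handled by gluing a base of the contraction onto a base of the restriction, the limit step handled via Fact~\ref{Fact-finitary} for independence (spanning being automatic), and the cofinitary case reduced to the finitary one via the reformulation that $E_\alpha\setminus B_{i,\alpha}$ is a base of $M_i^{*}\restrict E_\alpha$. The only difference is cosmetic — you unify the two cases by introducing $N$ and $D^\beta$ up front rather than stating the dual reformulation at the end, which makes the dualisation bookkeeping more explicit but does not change the argument.
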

    
    \begin{proof}
        Let~${i \in F}$. 
        By the construction, $ B_i^{\alpha} $ is a base of~${(M_i \restrict E_{\alpha+1})/E_{\alpha}}$. 
        We know that~${B_{i,\alpha}}$ is a base of~${M_i \restrict E_{\alpha}}$ by induction. 
        By combining these, we obtain that~$B_{i,\alpha+1}$ is a base of~${M_i \restrict E_{\alpha+1}}$. 
        At limit steps we obviously preserve the spanning property and~${i \in F}$ ensures that the independence as well. 
        For~${i \in K \setminus F}$, we use the reformulation that ${E_\alpha \setminus B_{i,\alpha}}$ is a base of ${M_i^{*}\restrict E_{\alpha}}$ for every~$\alpha$ which can be proved the same way. 
    \end{proof}
    
    Lemma~\ref{lem:build-base} tells for~${\alpha = \kappa}$ that~$B_i$ is a base of~$M_i$ which completes the proof.
\end{proof}

\section{A consistency result}
\label{s:consistently-false}

To prove our relative consistency result Theorem~\ref{thm:unprovable-intro}, we recall that a matroid is called \emph{uniform} if whenever~$I$ is independent with ${e \in I}$ and ${f \in E \setminus I}$, then~${I-e+f}$ is also 
independent. 
We also recall that the \emph{reaping number}~$\mathfrak{r}$ (also called the \emph{refinement number}) 
is the least cardinal such that there exists a family ${(A_i \colon i < \mathfrak{r})}$ of infinite subsets of~$\mathbb{N}$ 
for which there is no bipartition of~$\mathbb{N}$ that splits each~$A_i$ into two infinite pieces, see~\cite{vaughan}. 
Clearly ${\aleph_0 < \mathfrak{r} \leq 2^{\aleph_0}}$, and hence under the Continuum Hypothesis we have ${\mathfrak{r} = 2^{\aleph_0}}$. 
However, the same conclusion also holds for example under Martin's Axiom. 
On the other hand, also ${\mathfrak{r} < 2^{\aleph_0}}$ is consistently true, see~\cites{van1984integers,vaughan}. 

\begin{thm}\label{thm:unprovable-real}
    If ${\mathfrak{r} = 2^{\aleph_0}}$ then there is a uniform matroid~$U$ on~$\mathbb{N}$ 
    such that the matroid family consisting of two copies of~$U$ admits a packing and a covering, but not a partitioning. 
\end{thm}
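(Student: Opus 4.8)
Observe first what must be achieved. For the family $\mathcal M=(U,U)$, a base packing and a base covering exist precisely when $\mathbb N$ can be partitioned into two sets spanning in $U$, respectively into two sets independent in $U$; and a base partitioning exists precisely when $U$ and $U^*$ share a base, that is, when some $B\subseteq\mathbb N$ has both $B$ and $\mathbb N\setminus B$ a base of $U$. A uniform matroid on $\mathbb N$ of finite rank admits no base covering of two copies of itself (two independent sets cover at most $2r$ elements), and dually one of finite corank admits no base packing; so $U$ will necessarily have rank and corank $\aleph_0$. For such a $U$ every independent set is co-infinite, every spanning set is infinite, and the single-element exchange property defining uniformity is reflected in strong symmetries of the families of independent, dependent, spanning and circuit-type sets. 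The plan is to build $U$ by a transfinite recursion of length $2^{\aleph_0}$, approximating it by an increasing chain of such symmetric "partial" data, and to use the hypothesis $\mathfrak r=2^{\aleph_0}$ to keep the recursion going.

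To supply a base covering and a base packing once and for all, I would fix at the outset a partition $\mathbb N=N_0\dcup N_1\dcup N_2\dcup N_3$ into infinite sets, and impose as an invariant of the whole construction that no circuit ever lie inside $N_0\cup N_2$ or inside $N_1\cup N_3$, and no cocircuit ever lie inside $N_0\cup N_1$ or inside $N_2\cup N_3$. In the final matroid $N_0\cup N_2$ and $N_1\cup N_3$ will then be independent (a covering) and $N_0\cup N_1$, $N_2\cup N_3$ will be spanning (a packing), while neither pair is a partition into bases, since $N_0\cup N_2$ fails to be spanning. Next enumerate $\mathcal P(\mathbb N)=\{B_\xi:\xi<2^{\aleph_0}\}$. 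At stage $\xi$ I will have committed to symmetric circuit and cocircuit data of size $<2^{\aleph_0}$, respecting the four-part invariant and jointly realisable by a uniform matroid of rank and corank $\aleph_0$. If it is already forced that $B_\xi$ or $\mathbb N\setminus B_\xi$ cannot be a base, do nothing; otherwise (and then in particular $B_\xi$ meets both $N_0\cup N_2$ and $N_1\cup N_3$ in infinite sets) I adjoin a fresh infinite circuit $C\subseteq B_\xi$, together with the exchange-orbit it generates, thereby making $B_\xi$ dependent and forbidding the partition $(B_\xi,\mathbb N\setminus B_\xi)$; symmetrically, when it is $\mathbb N\setminus B_\xi$ that can still be spoiled, adjoin a fresh cocircuit inside $\mathbb N\setminus B_\xi$.

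The choice of this fresh $C$ is exactly where $\mathfrak r=2^{\aleph_0}$ enters: $C$ must have infinite intersection with each of $N_0\cup N_2$ and $N_1\cup N_3$ so that the four-part invariant survives the exchange-orbit, it must be positioned so that no previously committed circuit lies inside it, and it must not violate circuit elimination against the earlier circuits — constraints coming from a family of fewer than $2^{\aleph_0}$ infinite subsets of $\mathbb N$. Since $\mathfrak r=2^{\aleph_0}$, every such small family is split by a single set, and it is precisely this splitting freedom that lets one locate the required $C\subseteq B_\xi$ (and dually a fresh cocircuit inside $\mathbb N\setminus B_\xi$). One then lets $U$ be the matroid on $\mathbb N$ whose independent sets are those containing no committed circuit.

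I expect the bulk of the work, and the main obstacle, to lie in the verification at the limit: that adjoining circuits and cocircuits one at a time over all $2^{\aleph_0}$ steps genuinely converges to a well-defined \emph{uniform} matroid of rank and corank $\aleph_0$. The delicate axiom is the maximality axiom (4) — that every independent set extends to a maximal one — which forces one, alongside adding circuits, to ``complete downward'' and arrange at each stage that every set destined to remain independent really is contained in an eventual base; one must also check that circuit elimination is never broken across limit stages, and that the four-part invariant is never accidentally spoiled. By construction $U$ and $U^*$ then share no base, so $\mathcal M=(U,U)$ has no base partitioning, whereas the four-part invariant yields a base covering and a base packing, as required; and it is to keep these $2^{\aleph_0}$ many requirements simultaneously solvable throughout the recursion that the full strength $\mathfrak r=2^{\aleph_0}$ is needed.
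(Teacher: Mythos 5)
Your overall strategy — a transfinite recursion of length $2^{\aleph_0}$ together with a splitting argument that exploits $\mathfrak r = 2^{\aleph_0}$ — matches the paper's, but there is a genuine gap in how you propose to build the matroid, and you yourself flag it without resolving it. You want to commit circuits (and dually cocircuits) one at a time and in the end declare a set independent when it contains no committed circuit. The hard part, which you correctly identify as ``the bulk of the work,'' is showing that the limit of this process is a genuine matroid: in particular that the maximality axiom for infinite matroids holds (every independent subset of every $X\subseteq E$ extends to a maximal one inside $X$), and that circuit elimination is never violated. Your recursion, driven by an enumeration of $\mathcal P(\mathbb N)$, takes care of spoiling potential base partitions, but it gives no mechanism for maximality; this cannot be patched later, since adding further circuits never helps an independent set extend to a base. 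Working with circuits rather than bases is exactly what makes the limit verification intractable, and you have not shown how to overcome it.

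The paper sidesteps this obstacle by constructing the family $\mathcal B$ of \emph{bases} of $U$ directly and invoking the characterisation from \cite{bowler2016self}: a family $\mathcal B\subseteq\mathcal P^{*}(\mathbb N)$ that is a $\subseteq$-antichain, is closed under single-element exchange, and satisfies, for every pair $I\subseteq X\subseteq\mathbb N$, that some $B\in\mathcal B$ is either $\subseteq I$, or sandwiched between $I$ and $X$, or $\supseteq X$, is automatically the set of bases of a uniform matroid. The recursion therefore enumerates pairs $(I,X)$ rather than $\mathcal P(\mathbb N)$, and this third condition is precisely the maximality axiom recast in a form that can be discharged one pair at a time. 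When a pair $(I_\alpha,X_\alpha)$ is not yet handled, the reaping hypothesis is used to find $G\in\mathcal P^{*}(X_\alpha\setminus I_\alpha)$ such that $I_\alpha\cup G$ is $\subseteq$-incomparable with, and does not complement, any previously constructed base; the antichain property, the no-complementary-pair property, and the pre-chosen covering and packing witnesses are then preserved automatically. If you want to salvage the circuit-based version you will need an analogue of that third condition — a schedulable surrogate for maximality — and as written your proposal does not supply one.
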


This section is devoted to the proof of Theorem~\ref{thm:unprovable-real}. 
It is based on a construction of Bowler and Geschke in~\cite{bowler2016self}. 
Among other results they showed that the existence of a matroid admitting two bases with different infinite sizes is consistent with ZFC (and actually independent of ZFC). 

\begin{proof}[Proof of Theorem~\ref{thm:unprovable-real}]
    Let us denote the set of the infinite and (simultaneously) co-infinite subsets of a set~$X$ by~$\mathcal{P}^{*}(X)$ and we write ${X \subseteq^{*} Y}$ if ${\left| X \setminus Y \right| < \aleph_0}$. 
    We construct a ${\mathcal{B} \subseteq \mathcal{P}^{*}(\mathbb{N})}$ which is a set of the bases of a desired~$U$. 
    Conditions~\ref{item:unprov-1}--\ref{item:unprov-3} in the following list are expressing that~$\mathcal{B}$ is the set of the bases of a uniform matroid on~$\mathbb{N}$ (see~\cite{bowler2016self}), 
    whereas~\ref{item:unprov-4} and~\ref{item:unprov-5} guarantee that~$U$ admits a packing and a covering but not a partitioning. 
    \begin{enumerate}
        [label=(\arabic*)]
        \item \label{item:unprov-1} The elements of~$\mathcal{B}$ are pairwise $\subseteq$-incomparable. 
        \item \label{item:unprov-2} Whenever ${e \in B \in \mathcal{B}}$ and ${f \in \mathbb{N} \setminus B}$, then~${B-e+f \in \mathcal{B}}$. 
        \item\label{item:unprov-3} For every ${I \subseteq X \subseteq \mathbb{N}}$, there is a ${B \in \mathcal{B}}$ such that one of the following holds:
            \begin{enumerate}
                [label=(\alph*)]
                \item\label{item:unprov-3a} ${B \subseteq I}$
                \item\label{item:unprov-3b} ${I \subseteq B \subseteq X}$
                \item\label{item:unprov-3c} ${X \subseteq B}$
            \end{enumerate}
        \item\label{item:unprov-4} There are no disjoint ${B, B' \in \mathcal{B}}$ with~${B \cup B' = \mathbb{N}}$. 
        \item\label{item:unprov-5} There are ${R_0, R_1, P_0, P_1 \in \mathcal{B}}$ with~${R_0 \cup R_1 = \mathbb{N}}$ and~${P_0 \cap P_1 = \emptyset}$. 
    \end{enumerate}
    
    Let us fix a well-order~$\prec$ of the set ${\mathcal{A} := \{ (I,X) \colon I\subseteq X\subseteq \mathbb{N} \}}$ of type~$2^{\aleph_0}$ in which ${(\emptyset, I) \preceq (I,X)}$ for every~${I \subseteq X \subseteq \mathbb{N}}$. 
    Let ${\{ (I_\alpha,X_\alpha) \colon \alpha<2^{\aleph_0} \}}$ be the enumeration of~$\mathcal{A}$ given by~$\prec$. 
    We build the family~$\mathcal{B}$ by transfinite recursion as the union of an increasing continuous chain ${\left\langle \mathcal{B}_\alpha \colon \alpha<2^{\aleph_0} \right\rangle}$ with 
    ${\left|\mathcal{B}_{0} \right|, \left|\mathcal{B}_{\alpha+1}\setminus \mathcal{B}_\alpha \right| \leq \aleph_0}$ for $\alpha<2^{\aleph_0}$ 
    where~$\mathcal{B}_\alpha$ satisfies~\ref{item:unprov-1}, \ref{item:unprov-2}, \ref{item:unprov-4}, \ref{item:unprov-5} 
    and the restriction of~\ref{item:unprov-3} to the pairs ${\{ (I_\beta,X_\beta) \colon \beta<\alpha \}}$ that we call~\ref{item:unprov-3}($\alpha$). 
    Let ${R_0, R_1 \subseteq \mathbb{N}}$ be such that they cover~$\mathbb{N}$ and all the sets ${R_0 \setminus R_1}$, ${R_1\setminus R_0}$, ${R_0\cap R_1}$ are infinite. 
    We also pick disjoint sets ${P_0, P_1 \subseteq \mathbb{N}}$ such that all of ${P_0}, {P_1}, {\mathbb{N} \setminus (P_0 \cup P_1)}$ have an infinite intersection with any of ${R_0 \setminus R_1}$, ${R_1\setminus R_0}$, ${R_0\cap R_1}$. 
    By defining~$\mathcal{B}_0$ as the closure of the set ${\{ R_0, R_1, P_0, P_1 \}}$ under~\ref{item:unprov-2}, we get neither $\subseteq$-comparable sets nor two sets forming a partitioning. 
    Clearly the conditions cannot be ruined at limit steps. 
    Suppose that~$\mathcal{B}_\alpha$ is defined. 
    If~\ref{item:unprov-3}($\alpha+1$) is satisfied by~$\mathcal{B}_\alpha$ then let ${\mathcal{B}_{\alpha+1} := \mathcal{B}_\alpha}$. 
    Suppose that~\ref{item:unprov-3}($\alpha+1$) is not satisfied.
    
    \begin{clm}
        $\left| X_\alpha \setminus I_\alpha \right| = \aleph_0$
    \end{clm}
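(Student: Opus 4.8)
The plan is to argue by contraposition: assuming $X_\alpha\setminus I_\alpha$ is finite, I will produce a base in $\mathcal{B}_\alpha$ witnessing one of \ref{item:unprov-3a}--\ref{item:unprov-3c} for the pair $(I_\alpha,X_\alpha)$, so that \ref{item:unprov-3}($\alpha+1$) is already satisfied by $\mathcal{B}_\alpha$, contradicting the standing hypothesis. Since $X_\alpha\setminus I_\alpha\subseteq\mathbb{N}$, showing it is infinite is the same as showing $\left|X_\alpha\setminus I_\alpha\right|=\aleph_0$.

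First I would extract a base containing $I_\alpha$. By the defining property of~$\prec$ we have $(\emptyset,I_\alpha)\preceq(I_\alpha,X_\alpha)$; equality would force $I_\alpha=X_\alpha=\emptyset$, in which case any $B\in\mathcal{B}_\alpha$ witnesses \ref{item:unprov-3c} and we are done, so we may assume $(\emptyset,I_\alpha)=(I_\beta,X_\beta)$ for some $\beta<\alpha$ and invoke \ref{item:unprov-3}($\alpha$) for this pair. It yields a $B\in\mathcal{B}_\alpha$ with $B\subseteq\emptyset$, or $B\subseteq I_\alpha$, or $I_\alpha\subseteq B$. The first is impossible since bases are infinite; the second would make $B$ a witness for \ref{item:unprov-3a} for $(I_\alpha,X_\alpha)$, contrary to the hypothesis; hence $I_\alpha\subseteq B$. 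Likewise $B\not\subseteq X_\alpha$, since otherwise $B$ would witness \ref{item:unprov-3b}.

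The core step is a finite exchange. From $I_\alpha\subseteq B$ we get $X_\alpha\setminus B\subseteq X_\alpha\setminus I_\alpha$, which is finite; set $m:=\left|X_\alpha\setminus B\right|$. If $m=0$ then $X_\alpha\subseteq B$ and $B$ witnesses \ref{item:unprov-3c}, so assume $m\ge 1$ and let $q:=\min\bigl(\left|B\setminus X_\alpha\right|,m\bigr)\ge 1$. Choose distinct $e_1,\dots,e_q\in B\setminus X_\alpha$ and distinct $g_1,\dots,g_q\in X_\alpha\setminus B$, and apply the exchange axiom \ref{item:unprov-2} $q$ times, replacing $e_\ell$ by $g_\ell$ at step $\ell$; at each step the removed element lies in the current set and the added one does not, so the process stays within $\mathcal{B}_\alpha$ and produces a $B^{\ast}\in\mathcal{B}_\alpha$ differing from $B$ only on $\{e_1,\dots,e_q,g_1,\dots,g_q\}$. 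As the $e_\ell$ avoid $X_\alpha\supseteq I_\alpha$ we retain $I_\alpha\subseteq B^{\ast}$. If $q=m$ then every element of $X_\alpha\setminus B$ has been added, so $X_\alpha\subseteq B^{\ast}$ and \ref{item:unprov-3c} holds; if $q<m$ then $q=\left|B\setminus X_\alpha\right|$, so every element of $B\setminus X_\alpha$ has been removed, whence $B^{\ast}\subseteq X_\alpha$ and $I_\alpha\subseteq B^{\ast}\subseteq X_\alpha$ gives \ref{item:unprov-3b}. In either case \ref{item:unprov-3}($\alpha+1$) is satisfied, the desired contradiction.

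The argument is short, and the only point needing care is the last step: verifying that the finitely many single-element exchanges can be carried out in sequence (the distinctness of the $e_\ell$ and $g_\ell$ is exactly what makes each application of \ref{item:unprov-2} legitimate) and tracking which of \ref{item:unprov-3b} and \ref{item:unprov-3c} the resulting base lands in, according to whether $B\setminus X_\alpha$ or $X_\alpha\setminus B$ is exhausted first.
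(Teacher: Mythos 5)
Your proof is correct and follows the same route as the paper: apply the induction hypothesis to the earlier pair $(\emptyset,I_\alpha)$ to extract a $B\supseteq I_\alpha$ in $\mathcal{B}_\alpha$, then use the exchange property \ref{item:unprov-2} to derive a contradiction if $X_\alpha\setminus I_\alpha$ were finite. The only difference is that the paper treats the final finite exchange as routine, while you spell it out with the $q=\min(|B\setminus X_\alpha|,m)$ case distinction; both land in the same place.
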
 
    
    \begin{proof}
        By the choice of~$\prec$, we know that ${(\emptyset, I_\alpha) \preceq(I_\alpha, X_\alpha)}$. 
        The inequality must be strict otherwise ${I_\alpha = X_\alpha = \emptyset}$ and hence~$\mathcal{B}_\alpha$ satisfies~\ref{item:unprov-3}($\alpha+1$) which is a contradiction. 
        By the induction hypothesis, the condition corresponding to ${(\emptyset, I_\alpha)}$ is satisfied in~$\mathcal{B}_\alpha$. 
        Since $\mathcal{B}_\alpha$ does not satisfy property~\ref{item:unprov-3}($\alpha+1$), there is no ${B \in \mathcal{B}_\alpha}$ with ${B \subseteq I_\alpha}$. 
        Therefore, we must have a ${B \in \mathcal{B}_\alpha}$ with~${B \supseteq I_\alpha}$. 
        If ${X_\alpha \setminus I_\alpha}$ were finite, then property~\ref{item:unprov-2} and~$B$ would give a ${B' \in \mathcal{B}_\alpha}$ with either ${B'\supseteq X_\alpha}$ or ${I_\alpha \subseteq B' \subseteq X_\alpha}$, both of which  contradict the assumption that~$\mathcal{B}_\alpha$ does not satisfy~\ref{item:unprov-3}($\alpha+1$). 
    \end{proof} 
    
    Consider the set
    \[
        \mathcal{F} = 
        \{ B \cap (X_\alpha\setminus I_\alpha) \colon B \in \mathcal{B}_\alpha \} \cup \{ ( X_\alpha\setminus I_\alpha) \setminus B \colon B \in \mathcal{B}_\alpha \}. 
    \]
    Since $\left|\mathcal{F}\right|\leq  \left|\alpha\right|+ \aleph_0  <2^{\aleph_0}$ and $\mathfrak{r}=2^{\aleph_0}$,  there is a ${G \in 
    \mathcal{P}^{*}(X_\alpha \setminus I_\alpha)}$ such that both~$G$ and ${ (X_\alpha \setminus I_\alpha) \setminus G}$ 
    have an infinite intersection with each infinite element of~$\mathcal{F}$. 
    Let ${B_\alpha := I_\alpha \cup G}$ and we extend~$\mathcal{B}_\alpha$ with all the sets 
    \[
        \{ Y \subseteq \mathbb{N} \colon \left| Y \setminus B_\alpha \right| = \left| B_\alpha \setminus Y \right| < \aleph_0 \}
    \] 
    to obtain $\mathcal{B}_{\alpha+1}$. 
    Properties~\ref{item:unprov-2} and~\ref{item:unprov-3}($\alpha+1$) obviously hold for~$\mathcal{B}_{\alpha+1}$. 
    To show~\ref{item:unprov-1}, it is enough to prove that there is no~${B \in \mathcal{B}_\alpha}$ for which ${B \subseteq^{*} B_\alpha}$ or ${B_\alpha \subseteq^{*} B}$. 
    Suppose for a contradiction that ${B \subseteq^{*} B_\alpha}$ for some ${B \in \mathcal{B}_\alpha}$. 
    Then ${B \cap (X_\alpha \setminus I_\alpha)}$ must be finite since otherwise it contains infinitely many elements not contained by~$G$. 
    But then ${B \subseteq^{*} I_\alpha}$ and by applying~\ref{item:unprov-2} we obtain a ${B' \in \mathcal{B}_{\alpha}}$ for which either ${B' \subseteq I_\alpha}$ or ${I_\alpha \subseteq B' \subseteq X_\alpha}$, 
    thus property~\ref{item:unprov-3}($\alpha+1$) was satisfied in~$\mathcal{B}_\alpha $, a contradiction. 
    Ruling out the existence of a~${B \in \mathcal{B}_\alpha}$ with ${B_\alpha \subseteq^{*} B}$ is analogous. 
    
    To check~\ref{item:unprov-4}, take an arbitrary~${B \in \mathcal{B}_\alpha}$. 
    If ${B \cap (X_\alpha\setminus I_\alpha)}$ is infinite then the choice of~$G$ guarantees that ${B \cap G}$ is infinite and therefore $B\cap B_\alpha$ as well. 
    If ${B \cap (X_\alpha\setminus I_\alpha)}$ is finite, then ${(X_\alpha \setminus I_\alpha})\setminus B$ is infinite 
    and so is its intersection with ${(X_\alpha \setminus I_\alpha)\setminus G}$ by the choice of~$G$. 
    Therefore ${\mathbb{N}\setminus (B \cup B_\alpha)}$ is infinite. 
\end{proof}

\bibliographystyle{unsrtnat}
\begin{bibdiv}
\begin{biblist}
\bib{MR3784779}{article}{
   author={Aigner-Horev, Elad},
   author={Carmesin, Johannes},
   author={Fr\"{o}hlich, Jan-Oliver},
   title={On the intersection of infinite matroids},
   journal={Discrete Math.},
   volume={341},
   date={2018},
   number={6},
   pages={1582--1596},
   
}	

\bib{borujeni2015thin}{article}{
      author={Borujeni, S Hadi~Afzali},
      author={Bowler, Nathan},
       title={Thin sums matroids and duality},
        date={2015},
     journal={Advances in Mathematics},
      volume={271},
       pages={1\ndash 29},
}

\bib{nathanhabil}{thesis}{
      author={Bowler, Nathan},
       title={Infinite matroids},
        type={Habilitation thesis, University Hamburg},
        date={2014},
}

\bib{bowler2013ubiquity}{article}{
      author={Bowler, Nathan},
      author={Carmesin, Johannes},
       title={The ubiquity of psi-matroids},
        date={2013},
     journal={arXiv preprint arXiv:1304.6973},
}

\bib{bowler2015matroid}{article}{
      author={Bowler, Nathan},
      author={Carmesin, Johannes},
       title={Matroid intersection, base packing and base covering for infinite
  matroids},
        date={2015},
     journal={Combinatorica},
      volume={35},
      number={2},
       pages={153\ndash 180},
}

\bib{bowler2018infinite}{article}{
      author={Bowler, Nathan},
      author={Carmesin, Johannes},
      author={Christian, Robin},
       title={Infinite graphic matroids},
        date={2018},
     journal={Combinatorica},
      volume={38},
      number={2},
       pages={305\ndash 339},
}

\bib{bowler2016self}{article}{
      author={Bowler, Nathan},
      author={Geschke, Stefan},
       title={Self-dual uniform matroids on infinite sets},
        date={2016},
     journal={Proceedings of the American Mathematical Society},
      volume={144},
      number={2},
       pages={459\ndash 471},
}

\bib{BDKW13}{article}{
      author={Bruhn, H.},
      author={Diestel, R.},
      author={Kriesell, M.},
      author={Pendavingh, R.},
      author={Wollan, P.},
       title={Axioms for infinite matroids},
        date={2013},
     journal={Advances in Mathematics},
      volume={239},
       pages={18\ndash 46},
}

\bib{van1984integers}{incollection}{
      author={Douwen, Eric K.~{van}},
       title={The integers and topology},
        date={1984},
   booktitle={Handbook of set-theoretic topology},
   publisher={Elsevier},
       pages={111\ndash 167},
}

\bib{edmonds1965transversals}{article}{
      author={Edmonds, Jack},
      author={Fulkerson, D.~R.},
       title={Transversals and matroid partition},
        date={1965},
        ISSN={0160-1741},
     journal={J. Res. Nat. Bur. Standards Sect. B},
      volume={69B},
       pages={147\ndash 153},
}

\bib{EGJKP19}{unpublished}{
      author={Erde, J.},
      author={Gollin, P.},
      author={Jo\'{o}, A.},
      author={Knappe, P.},
      author={Pitz, M.},
       title={A {C}antor-{B}ernstein-type theorem for spanning trees in
  infinite graphs},
        date={2019},
        note={https://arxiv.org/abs/1907.09338},
}

\bib{MR274315}{article}{
      author={Higgs, D.~A.},
       title={Matroids and duality},
        date={1969},
        ISSN={0010-1354},
     journal={Colloq. Math.},
      volume={20},
       pages={215\ndash 220},
         url={https://doi.org/10.4064/cm-20-2-215-220},
}

\bib{MR1165540}{incollection}{
      author={Oxley, James},
       title={Infinite matroids},
        date={1992},
   booktitle={Matroid applications},
      series={Encyclopedia Math. Appl.},
      volume={40},
   publisher={Cambridge Univ. Press, Cambridge},
       pages={73\ndash 90},
         url={https://doi.org/10.1017/CBO9780511662041.004},
}

\bib{vaughan}{incollection}{
      author={Vaughan, Jerry~E.},
       title={Small uncountable cardinals and topology},
        date={1990},
   booktitle={Open problems in topology},
   publisher={North-Holland, Amsterdam},
       pages={195\ndash 218},
}

\end{biblist}
\end{bibdiv}

\end{document}